\def\?[#1]{\textbf{[#1]}\marginpar{\Large{\textbf{??}}}}
\let\epsilon=\varepsilon 
\newtheorem{thm}{Theorem}
\newtheorem{prop}{Proposition}[section]
\newtheorem{lem}[prop]{Lemma}
\newtheorem{cor}[prop]{Corollary}
\numberwithin{equation}{section}
\DeclareMathOperator{\ad}{ad}
\DeclareMathOperator{\Spec}{Spec}
\DeclareMathOperator{\Op}{Op}
\DeclareMathOperator{\SL}{SL}
\DeclareMathOperator{\supp}{supp}
\newcommand{\Sc}{\mathcal S}
\newcommand{\R}{\mathbb R}
\newcommand{\N}{\mathbb N}
\newcommand{\Z}{\mathbb Z}
\newcommand{\C}{\mathbb C}
\newcommand{\T}{\mathbb T}
\newcommand{\One}{\mathbbm{1}}
\newcommand{\cinf}{C^{\infty}}
\newcommand{\dd}{\partial}
\newcommand{\ls}{\lesssim}
\newcommand{\la}{\langle}
\newcommand{\ra}{\rangle}
\newcommand{\Ran}{\mathrm{Ran}}
\newcommand{\spn}{\mathrm{span}}
\newcommand{\Ohi}{O(h^{\infty})}
\newcommand{\Sp}{\mathrm{Sp}}
\newcommand{\wh}{\widehat}
\title{The Spectrum of an Almost Maximally Open Quantized Cat Map}
\author{Yonah Borns-Weil}
\email{yonah\_borns-weil@berkeley.edu}
\address{Department of Mathematics, University of California, Berkeley,
CA 94720}
\keywords{}
\subjclass[]{}
\begin{document}

\begin{abstract}
We consider eigenvalues of a quantized cat map (i.e. hyperbolic symplectic integer matrix), cut off in phase space to include a fixed point as its only periodic orbit on the torus. We prove a simple formula for the eigenvalues on both the quantized real line and the quantized torus in the semiclassical limit as $h\to0$. We then consider the case with no fixed points, and prove a superpolynomial decay bound on the eigenvalues. The results are illustrated with numerical calculations.
\end{abstract}
\maketitle

\section{Introduction}



The study of the classical-quantum correspondence dynamics consists of comparing quantum evolution to classical dynamical systems. One example of this correspondence  is the relationship between continuous-time classical dynamics and the theory ofscattering resonances – see the textbook by Dyatlov and Zworski \cite{DZ} for an introduction. Recently, there has been interest in the discrete-time analogue given by \emph{open quantum maps}. These maps correspond to quantizations of symplectomorphisms on a given symplectic manifold, which are ``opened" by designating some of the points to ``escape to infinity". Of particular interest are maps corresponding to \emph{hyperbolic} symplectomorphisms, where one may study the relationship between quantum and classical dynamics near a hyperbolic trapped set. See the paper by Nonnenmacher \cite{Nonnenmacher} for a general overview of the literature.

Perhaps the most fundamental example in classical hyperbolic dynamics is the generalized \emph{Arnol'd cat map} given by a matrix in $\SL(2,\Z)=\Sp(2,\Z)$, which act on both $\R^2$ and $\T^2$ and have a single hyperbolic fixed point at the origin. The natural quantum-dynamical analogue is a class of metaplectic operators, known as \emph{quantized cat maps}. On the quantized torus, such maps were treated by Bouzouina and De Bi\'{e}vre \cite{BD}, who proved a version of quantum ergodicity in this setting (see also Dyatlov–J\'{e}z\'{e}quel \cite{DJ} and Faure–Nonnenmacher–De Bi\'{e}vre \cite{FND}). These quantum cat maps can then be ``opened" as part of the general framework of open quantum maps, which corresponds to sending some points ``to infinity" to mimic scattering. Once adjusted in this manner, such maps enjoy many similar results to continuous scattering problems, and are a subject of current research. In the physics literature, open quantum maps are used as a toy computational model for quantum chaos, see Saraceno–Vallejos \cite{SV} and Nonnenmacher–Zworski \cite{NZ} and references given there. In addition, in Nonnenmacher, Sj\"ostrand, and Zworski \cite{NSZ2} reduce the study of some continuous-time systems that op open quantum maps. Schenck \cite{Schenck} considers the similar case of partially open quantum maps, which gives a discrete-time analogue to a damped wave equation.

In this paper, we consider a distinct but related problem, of determining the quantum behavior of a cat map very near its hyperbolic fixed point. Specifically, suppose $M$ is a cat map and $\wh{M}$ is its quantization, acting either on the quantized real line or the quantized torus. We may consider $\Op_h\chi\wh{M}$ or $\Op_N\chi\wh{M}_N$ respectively, where $\chi$ is a bump function equal to $1$ near the origin and $\Op$ refers to semiclassical quantization, see Sections \ref{ss:microlocal} and \ref{ss:microlocalHN}. This has the effect of isolating the behavior near the hyperbolic fixed point, and producing a compact operator whose spectrum can be studied.

The case of such an ``almost maximally open" map has been studied before in the case of Schr\"odinger operators on $\R^n$. The first results in the complicated setting of obstacle problems were provided by Bardos–Guillot–Ralston \cite{BGR}, and the problem was settled by Ikawa \cite{Ikawa} with improvements by Gerard \cite{Gerard}. The semiclassical case was then handled by Gerard–Sj\"ostrand \cite{GS} and Sj\"ostrand \cite{Sjostrand}, see also Louati and Rouleux \cite{LR}. Surprisingly, to the author's knowledge, the analogous situation for an open quantum map has never been studied before. We shall show that the situations on the torus and on the real line are similar, which matches our intuition that everything away from a small local region had ``escaped". When localized by a small enough microlocal cutoff operator and for small enough $h$, the spectrum is discrete away from the origin, and the eigenvalues are given up to $\Ohi$ error by $\lambda^{-\frac{2k+1}{2}}$, where $\lambda$ is the positive eigenvalue of the underlying symplectic transformation.

In addition, we consider the ``maximally open" case of $\Op_N\nu\wh{M}_N$, where $\nu$ is a bump function away localized in a small annulus about the hyperbolic fixed point. This map corresponds to a discrete-time analogue of the ``nontrapping estimates" seen for scattering resonances in Section 6.4 of \cite{DZ}. In this case, the eigenvalues decay to zero, and we shall show in Section \ref{s:rates} that they in fact decay to zero like $\Ohi$.

\subsection{The Classical Cat Map}\label{ss:catmap}
The original Arnol'd cat map was defined as the matrix $$M=\begin{pmatrix}2&1\\1&1\end{pmatrix}$$ acting on $\R^2$, and more interestingly also acting on the two-dimensional torus $\T^2$. As $M\in\mathrm{SL}(2,\Z)$, one sees that $M$ is invertible and measure-preserving with respect to the Lebesgue measure. When considered as a measure-preserving map on $\T^2$, $M$ is mixing, which in particular implies that it is ergodic. In addition, $M$ is topologically transitive and has a dense set of periodic points, which gives it to be \emph{chaotic} as defined by e.g. Teschl \cite{Teschl}.

These above properties hold for any $M\in\mathrm{SL}(2,\Z)$ with eigenvalues $\lambda,\frac{1}{\lambda}\not\in S^1$, so we call such matrices \emph{generalized cat maps}, or for convenience just \emph{cat maps}. We discuss the set of fixed and periodic points in further detail. By an argument using the pigeonhole principle, we may see that points on $\T^2$ are periodic points of $M$ if and only if they have rational coordinates. We now provide an elementary proposition about periodic orbits that will motivate our main result.
\begin{prop}
Let $M=Q^{-1}DQ$ be a generalized cat map with $D$ diagonal and $\det Q=1$. Then the only periodic orbit of $M$ on $\T^2$ contained in $B_{\frac{1}{4\lambda\|Q\|^2}}(0)$ is the fixed point at $0$.
\end{prop}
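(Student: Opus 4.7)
The plan is to show that any periodic orbit contained in such a small ball on $\T^2$ must in fact lift to a genuine orbit of $M$ acting on $\R^2$, after which hyperbolicity forces the orbit to be $\{0\}$. The intuition is that the ball is too small for any lattice ``wrapping'' to occur during a single application of $M$, so the torus dynamics and the $\R^2$ dynamics agree locally.

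Concretely, I would take a periodic orbit $x_0,x_1,\dots,x_{n-1}$ on $\T^2$ (with $x_n:=x_0$) all contained in $B_r(0)$, $r=1/(4\lambda\|Q\|^2)$, and choose lifts $\tilde{x}_k\in\R^2$ with $|\tilde{x}_k|<r$. Each $M\tilde{x}_k-\tilde{x}_{k+1}$ lies in $\Z^2$ by definition of the torus action, so it suffices to show these integer vectors all have norm strictly less than $1$. Writing $M=Q^{-1}DQ$ and using that $\det Q=1$ forces the singular values of $Q$ to be reciprocal, one has $\|Q^{-1}\|=\|Q\|$ and hence $\|M\|\le\lambda\|Q\|^2$. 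Combining,
\[
|M\tilde{x}_k-\tilde{x}_{k+1}| < (\lambda\|Q\|^2+1)\,r = \tfrac{1}{4}+\tfrac{1}{4\lambda\|Q\|^2}\le \tfrac{1}{2},
\]
where the final inequality uses $\lambda\|Q\|^2\ge 1$. Since the shortest nonzero element of $\Z^2$ has length $1$, this forces $M\tilde{x}_k=\tilde{x}_{k+1}$ in $\R^2$ for every $k$.

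Iterating gives $M^n\tilde{x}_0=\tilde{x}_n=\tilde{x}_0$. But $M^n-I$ has eigenvalues $\lambda^n-1$ and $\lambda^{-n}-1$, both nonzero by hyperbolicity of $M$, so $M^n-I$ is invertible and $\tilde{x}_0=0$. Hence the orbit is just $\{0\}$, as claimed. The only ``real'' work in the proof is the operator-norm bound $\|M\|\le\lambda\|Q\|^2$ and the associated arithmetic showing the resulting displacement is $<1$; once that is in hand, the rest is immediate. I do not anticipate any genuine obstacle, as the estimate is tight by construction of $r$ and the hyperbolicity of $M$ rules out any nontrivial $\R^2$-periodic point trivially.
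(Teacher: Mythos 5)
Your proof is correct, and it takes a genuinely different route from the paper's. The paper proves the statement dynamically: it decomposes a candidate periodic point into stable and unstable eigenvector components, tracks the exponential growth of the unstable component under $M^n$, and shows that the first iterate $n$ at which the unstable component exceeds $\frac{1}{4\lambda\|Q\|^2}$ still has $\|M^n v\|\le\frac{1}{2}$, so the torus orbit genuinely exits the ball; the general case is handled by conjugating by $Q$ and rescaling radii. Your proof is instead a lifting-plus-algebra argument: you show the ball is so small that each step of the torus dynamics produces a lattice translation of norm strictly less than $1$, hence zero, so the torus orbit lifts to an honest $\R^2$ orbit, and then invertibility of $M^n-I$ (which follows immediately from $\lambda\ne 1$) forces the lift to be the origin. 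Your approach is cleaner in two respects: it makes explicit the point that the paper leaves implicit (that intermediate iterates stay in the fundamental domain, so torus and plane dynamics agree along the whole orbit), and it handles the edge case of points lying entirely in the stable direction automatically, whereas the paper must dispose of it separately. Your observation that $\det Q=1$ forces $\|Q^{-1}\|=\|Q\|$ for a $2\times 2$ matrix, giving $\|M\|\le\lambda\|Q\|^2$, is what lets the radius $\frac{1}{4\lambda\|Q\|^2}$ do the work. One minor inaccuracy: the estimate is not actually tight to the stated radius — your computation only needs the displacement to be $<1$, so any $r<\tfrac{1}{\lambda\|Q\|^2+1}$ would do — but this only means your argument proves a slightly stronger claim than stated.
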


\begin{proof}
We begin by proving this for symmetric $M$, which is equivalent to $\|Q\|=1$. Write $v\in B_{\frac{1}{4\lambda}}(0)$ as $v=a_1v_1+a_2v_2$, where $v_1, v_2$ are the normalized eigenvectors of $M$ acting on $\R^2$ corresponding to $\lambda, \lambda^{-1}$, so \begin{equation}\label{Mnv}M^nv=\lambda^na_1v_1+\lambda^{-n}a_2v_2.\end{equation} If $v$ is a periodic point, we can easily see that if one of $a_1$ is $0$ then $a_2$ must also be, so without loss of generality assume $a_1,a_2\neq 0$, and note that $|a_1|,|a_2|\le\frac{1}{4\lambda}$. Let $n=\left\lfloor \log_{\lambda}\frac{1}{4|a_1|}\right\rfloor$, so \begin{equation}\label{lambda1}\frac{1}{4\lambda}<\lambda^n|a_1|\le \frac{1}{4}\end{equation} and also \begin{equation}\label{lambda2}|\lambda^{-n}a_2|\le |a_2|\le \frac{1}{4\lambda}.\end{equation} Combining (\ref{lambda1}) and (\ref{lambda2}) with (\ref{Mnv}) gives that $\frac{1}{4\lambda}<\|M^nv\|\le\frac{1}{2}$, which proves the result for this case.

In the general case, we let $M=Q^{-1}DQ$. Let $v\in Q^{-1}B_{\epsilon}(0)$. Then after scaling, the proof for the symmetric case says that there is an $n\in\N$ such that $D^nQv\in B_{2\lambda\epsilon}(0)\setminus B_{\epsilon}(0)$, or equivalently that $M^nv\in Q^{-1}B_{2\lambda\epsilon}(0)\setminus Q^{-1}B_{\epsilon}(0)$. Then this gives that any point int $B_{\frac{\epsilon}{\|Q\|}}(0)$ will be taken to $Q^{-1}B_{2\lambda\epsilon}(0)\setminus Q^{-1}B_{\epsilon}(0)$ which is itself contained in $B_{2\lambda\epsilon\|Q\|}(0)$. Letting $\epsilon=\frac{1}{4\lambda\|Q\|}$ gives the desired result.
\end{proof}

\subsection{Overview of Theorems and Proofs}
\label{ss:thms}

We work in the setting of operators on a quantum ``state space," which is a Hilbert space on which ``quantized" observables act. In the case of $T^*\R$, the state space is simply $L^2(\R)$, while in the case of the torus $\T^2$, it is a space $H_N$ which is $N$-dimensional and spanned by the Dirac comb distributions $\frac{1}{\sqrt{N}}\sum_{k\in\Z}\delta_{x=k+\frac{n}{N}}$. Observables are given by semiclassical pseudodifferential operators $\Op_h\chi$ (on the real line) and $\Op_N\chi$ (on the torus). From that framework we can then define metaplectic operators $\wh{M}$, $\wh{M_N}$ that ``quantize" linear symplectic transformations, which on $\T^2$ are precisely the cat maps discussed in Section \ref{ss:catmap}.

Our result on the real line is the following:

\begin{thm}\label{mainL2}
Let $\wh M$ be a metaplectic operator with eigenvalues $\lambda, \lambda^{-1}$ of $M$ where $\lambda>1$, and let $\chi\in\cinf_0(\R^2)$ equal to $1$ near the origin. Then for all $\delta>0$ there is an $h_0>0$ such that if $0<h<h_0,$
\begin{equation*}
    \Spec(\Op_h\chi\wh{M})\cap\left\{|z|\ge \delta\right\}=\{\mu_0, \mu_1, \dots\}\cap\left\{|z|\ge \delta\right\},\quad \frac{1}{\mu_k}=\lambda^{\frac{2k+1}{2}}+\Ohi.
\end{equation*}
\end{thm}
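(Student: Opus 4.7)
The plan is to first reduce to the case $M = D = \mathrm{diag}(\lambda,\lambda^{-1})$. Writing $M = Q^{-1}DQ$ with $Q\in\SL(2,\R)$, the exact intertwining $\wh{M} = \wh{Q}^{-1}\wh{D}\wh{Q}$ of the metaplectic representation (up to a global sign), together with the exact Egorov identity $\wh{Q}\,\Op_h\chi\,\wh{Q}^{-1} = \Op_h(\chi\circ Q^{-1})$ for Weyl quantization of linear symplectic maps, unitarily conjugates $\Op_h\chi\,\wh{M}$ to $\Op_h(\chi\circ Q^{-1})\,\wh{D}$. Since $\chi\circ Q^{-1}$ still equals $1$ near the origin, from now on I assume $M=D$ and $\wh{D}u(x) = \lambda^{-1/2}u(x/\lambda)$.

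The candidate eigenvalues are motivated by the formal polynomial eigendistributions $\wh{D}(x^k) = \lambda^{-(k+1/2)}x^k$, giving $\mu_k = \lambda^{-(k+1/2)}$, which matches the stated $\mu_k^{-1} = \lambda^{(2k+1)/2}$. Although $x^k$ is not in $L^2$, an $L^2$ quasi-mode is obtained by truncating with an infinite-product cutoff; in the toy case where $\Op_h\chi$ is replaced by multiplication by $\chi(x,0)$, the formula
\begin{equation*}
u_k(x) = x^k\prod_{j=0}^\infty \chi(x/\lambda^j,0)
\end{equation*}
defines an exact $L^2$ eigenfunction with eigenvalue $\mu_k$ (the product terminates pointwise because $\chi\equiv 1$ near the origin and $x/\lambda^j \to 0$). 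For the genuine pseudo-differential cutoff, the same ansatz together with the standard semiclassical expansion of $\Op_h\chi$ applied to smooth, compactly supported functions gives $(\Op_h\chi\,\wh{D} - \mu_k)u_k = O_{L^2}(h^\infty)$; the crucial point is that every $\xi$-derivative of $\chi$ vanishes identically near the origin, killing all subleading terms in the expansion. This yields a genuine eigenvalue within $O(h^\infty)$ of each $\mu_k$ with $|\mu_k|\geq \delta$.

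To show that these are the \emph{only} eigenvalues in $\{|z|\geq\delta\}$, I would appeal to the semiclassical trace formula. The operator $T := \Op_h\chi\,\wh{D}$ is trace class since $\chi$ is compactly supported, and iterated Egorov gives $T^n = \Op_h(\chi_n)\,\wh{D}^n + O_{L^2\to L^2}(h^\infty)$, where $\chi_n = \prod_{j=0}^{n-1}\chi\circ D^j$ is still identically $1$ near the origin. Stationary phase applied to the explicit kernel integral for $\tr(\Op_h(\chi_n)\,\wh{D}^n)$ at the unique non-degenerate hyperbolic fixed point $0$ yields
\begin{equation*}
\tr(T^n) = \frac{1}{|\det(I-D^n)|^{1/2}} + O(h^\infty) = \frac{\lambda^{n/2}}{\lambda^n - 1} + O(h^\infty) = \sum_{k\geq 0}\lambda^{-n(k+1/2)} + O(h^\infty),
\end{equation*}
again because all derivatives of $\chi_n$ vanish at the origin. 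Matching the Fredholm determinant $\det(I - zT) = \prod_k(1 - z\mu_k)$ then pins down the complete eigenvalue list in $\{|z|\geq\delta\}$ as $\{\mu_k : |\mu_k|\geq\delta\}$ modulo $O(h^\infty)$.

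The main obstacle will be obtaining genuine $O(h^\infty)$ precision throughout (rather than $O(h^N)$ for each fixed $N$), and correctly packaging the quasi-mode construction and trace expansion in the pseudo-differential setting so that multiplicities and error terms are tracked uniformly. This rests on the fact that $\chi$ is identically $1$ on a full neighborhood of the origin, which forces every subleading term in the relevant semiclassical expansions to vanish near the fixed point.
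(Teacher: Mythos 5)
Your reduction to the diagonal case is fine and matches the paper, but the quasi-mode construction has a genuine gap, and the completeness argument is not secured.

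\textbf{The quasi-mode error is not $O(h^\infty)$.} The function $u_k(x)=x^k\prod_{j\ge0}\chi(x/\lambda^j,0)$ is $h$-independent, and the error $(\Op_h\chi - \chi(x,0))\wh{D}u_k$ involves $\xi$-derivatives of $\chi$ evaluated at $(x,0)$ for $x$ ranging over all of $\supp(\wh{D}u_k)$, not just near the origin. The left-quantization expansion reads
\begin{equation*}
\Op_h^L\chi\, v(x) = \chi(x,0)v(x) + h\,\partial_\xi\chi(x,0)D_x v(x) + O(h^2),
\end{equation*}
and these corrections vanish only where $\chi$ is locally constant, i.e.\ on the small ball where $\chi\equiv1$ — not on the full support of $\wh{D}u_k$, which extends up to the boundary of $\supp\chi(\cdot,0)$. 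Concretely, for $\chi=\rho(x^2+\xi^2)$ the first nonvanishing error term is $\frac{h^2}{2}\,\partial_\xi^2\chi(x,0)\,\partial_x^2(\wh{D}u_k)(x)$ with $\partial_\xi^2\chi(x,0)=2\rho'(x^2)\neq 0$ on an annulus inside $\supp\wh{D}u_k$. So the quasi-mode error is $O(h^2)$, not $O(h^\infty)$. Your claim that ``every $\xi$-derivative of $\chi$ vanishes identically near the origin'' is true but irrelevant, because the quasi-mode is not microlocalized near the origin.

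\textbf{Wrong semiclassical scale.} The underlying problem is that the true eigenfunctions of $\Op_h\chi\wh{D}$ concentrate at the semiclassical scale $\sqrt h$ around the fixed point, whereas your $u_k$ lives at scale $1$. Correcting this is precisely what the paper's operators $\tilde R_-=\One_{P_h\le\Lambda}U_h^*\hat R_-$ do: they place Hermite-type test functions $e^{-x^2/2}x^k$ at scale $\sqrt h$ via the rescaling $U_h$, and the heavy lifting (Lemmas \ref{YNormEstimate}, \ref{ChiYIneq}, Proposition \ref{MainEstimate}) is exactly the norm control needed to show the resulting Grushin problem is well posed after subtracting off the span of these modes.

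\textbf{Completeness via traces is not closed.} Even granting the quasi-modes, $T=\Op_h\chi\wh{D}$ is non-normal, so a small residual alone does not locate spectrum. The trace computation $\tr(T^n)=\lambda^{n/2}/(\lambda^n-1)+O(h^\infty)$ is plausible via stationary phase at the fixed point, but the error constant is $n$-dependent (the support of $\chi_n$ shrinks, the Hessian $(1-\lambda^n)$ grows), and reconstructing individual eigenvalues to $O(h^\infty)$ precision from $\{\tr(T^n)\}_n$ is not immediate: knowing $\sum_k\mu_k^n-\sum_k\lambda^{-n(k+1/2)}=O_n(h^\infty)$ for each $n$ does not by itself give each $\mu_k-\lambda^{-(k+1/2)}=O(h^\infty)$. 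The paper sidesteps all of this by exhibiting the effective Hamiltonian $E_{-+}=-I+zL_K+O(h^p)$ explicitly as a lower-triangular matrix conjugate to $-I+zD_K$, so the eigenvalues are read off directly from a finite matrix and multiplicities and error terms are tracked automatically by the Schur complement formula.
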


On the quantized torus, the analogous result is:
\begin{thm}\label{mainHN}
Let $\wh M_N$ be a metaplectic operator on $H_N$ with eigenvalues $\lambda, \lambda^{-1}$ of $M$ where $\lambda>1$, and let $\chi\in\cinf_0(\T^2)$ equal to $1$ near the origin with support on a small enough neighborhood of $0$. Then for all $\delta>0$, there is an $h_0>0$ such that if $0<h=\frac{1}{2\pi N}<h_0$,
\begin{equation*}
    \Spec(\Op_N\chi\wh{M}_N)\cap\left\{|z|\ge \delta\right\}=\{\mu_0, \mu_1, \dots\}\cap\left\{|z|\ge \delta\right\},\quad \frac{1}{\mu_k}=\lambda^{\frac{2k+1}{2}}+\Ohi.
\end{equation*}
\end{thm}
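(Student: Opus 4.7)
The plan is to reduce Theorem \ref{mainHN} to Theorem \ref{mainL2} by a microlocal comparison. The key input is the Proposition of Section \ref{ss:catmap}: once $\supp\chi$ is contained in $B_r(0)$ with $r<\frac{1}{4\lambda\|Q\|^2}$, the only periodic orbit of $M$ inside $\supp\chi$ is the fixed point at $0$, and the local dynamics of $M$ on a neighborhood of $0$ in $\T^2$ coincide with those on the corresponding neighborhood of $0$ in $\R^2$. One therefore expects the eigenvalues of $\Op_N\chi\wh{M}_N$ that are bounded away from $0$ to be determined by the same local hyperbolic normal form as those of $\Op_h\chi\wh{M}$, up to $\Ohi$ error.

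The first step is spectral confinement. If $u\in H_N$ is an eigenvector with eigenvalue $\mu$ satisfying $|\mu|\geq\delta$, iterating the eigenvalue equation yields $\mu^n u=(\Op_N\chi\wh{M}_N)^n u$. Since $\wh{M}_N$ is unitary, a repeated application of Egorov's theorem on $\T^2$ (valid because $\chi$ localizes within a single coordinate chart) shows that $(\Op_N\chi\wh{M}_N)^n u$ is microlocalized in $\bigcap_{k=0}^{n-1} M^{-k}(\supp\chi)$. By the Proposition, this intersection shrinks to $\{0\}$ as $n\to\infty$. Since $|\mu|\geq\delta$ is bounded below independently of $h$ and $n$, $u$ must be microlocalized at $\{0\}$ up to $\Ohi$.

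The second step is the microlocal identification. For a neighborhood $U\subset(-\tfrac12,\tfrac12)^2$ of $0$ containing $\supp\chi$, the natural periodization map $\Pi_N:L^2(\R)\to H_N$ (summing $\Z$-translates into the Dirac-comb basis spanning $H_N$) restricts to a unitary equivalence modulo $\Ohi$ between $L^2(\R)$-states microlocalized in $U$ and $H_N$-states microlocalized in $U$. Because both $\chi$ and the action of $\wh{M}$ keep such states inside a single fundamental domain, one obtains $\Pi_N(\Op_h\chi\wh{M})=(\Op_N\chi\wh{M}_N)\Pi_N+\Ohi$ on this subspace. Combined with the confinement from the previous step, the parts of the two spectra lying in $\{|z|\geq\delta\}$ are in $\Ohi$-bijection, and Theorem \ref{mainL2} then supplies the claimed asymptotic $1/\mu_k=\lambda^{(2k+1)/2}+\Ohi$.

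The main obstacle is making the microlocal equivalence in the second step genuinely quantitative: one needs eigenvalues to transfer with an $\Ohi$ error rather than merely with error $O(h^M)$ for a fixed $M$. The cleanest route is a Grushin/Schur-complement reduction that isolates the one-dimensional hyperbolic eigenspaces and compares the effective Hamiltonians to all orders in $h$, in the spirit of the Gerard--Sj\"ostrand \cite{GS} treatment of Schr\"odinger operators near a single hyperbolic orbit. A secondary bookkeeping point is to maintain the identification $h=\frac{1}{2\pi N}$ throughout, so that the two semiclassical limits are taken in tandem and the $\Ohi$ error bounds truly match.
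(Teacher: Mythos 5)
Your proposal is in the same spirit as the paper's proof: both reduce the torus problem to the $L^2(\R)$ problem via a periodization/transplantation and a Grushin problem. However, there are two substantive issues.

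First, your Step 1 (spectral confinement by iterating the eigenvalue equation and using Egorov) is not actually used in the paper and, more importantly, does not obviously contribute to the quantitative $\Ohi$ conclusion. It shows qualitatively that eigenvectors with $|\mu|\ge\delta$ concentrate near the fixed point, but it does not identify the eigenvalues. The paper instead directly conjugates the Grushin problem from Section \ref{ss:gru} to one on $H_N$ and never needs to argue microlocalization of eigenvectors as a separate step.

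Second, and more seriously, your Step 2 contains a genuine gap that you yourself flag as the ``main obstacle,'' and it is precisely the content of Section \ref{ss:transplants}. The bare periodization $\Pi_N u = N^{-1/2}\sum_n \sum_k u(k+\tfrac{n}{N})Q_n$ is not bounded $L^2(\R)\to H_N$ because it involves pointwise evaluation. The paper resolves this by working with the sampling operator $S_\psi=\Pi_N\Op_h\psi$ (so the pointwise evaluation is applied to a smoothed function), computes $S_\psi^*=\Op_h\bar\psi$ and $S_\psi S_\psi^*=\Op_N(\widetilde{\psi\#\bar\psi})$, and then constructs a spectral cutoff $T=T_0(I+R_{\Lambda'})^{-1/2}$ with $T_0=S_\psi^*\One_{F\le\Lambda'}$ which is \emph{exactly} unitary on $H_{\Lambda'}$ (Lemmas \ref{AlmostUnitarity}--\ref{AlmostEig}). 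It is $T$, not $\Pi_N$, that intertwines $I-z\Op_h\chi\wh M$ on the cutoff subspace of $L^2(\R)$ with $\Pi_{\Ran T^*}-z\Op_N\chi\wh M_N$ up to $\Ohi$. One also needs Lemma \ref{twospeccutoffs} to reconcile the spectral cutoffs at scale $\Lambda$ and $\Lambda'$ after conjugating by $\wh Q$. Your proposal names the right ingredients (periodization, Grushin reduction, Gerard--Sj\"ostrand-style comparison) but leaves exactly these constructions unresolved; as stated, ``$\Pi_N$ restricts to a unitary equivalence modulo $\Ohi$'' is not a well-posed assertion, and the rest of the argument rests on it.
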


For exact definitions of the terms in Theorems \ref{mainL2} and \ref{mainHN}, see Sections \ref{ss:microlocal} and \ref{ss:microlocalHN} respectively.

We give an overview of the proofs of Theorems \ref{mainL2} and \ref{mainHN}. There is a strong similarity to the methods used in \cite{Gerard}, and Theorem \ref{mainL2} could be proven using his methods, though we elect to use different underlying function spaces. To find the spectrum of $\Op_h\chi\wh{M}$, it is enough to consider the special case where $M=F$ is a diagonal matrix. Our strategy will be to build a Grushin problem acting on $L^2(\R)\oplus \C^K$ given by
\begin{equation*}
    \mathcal{P}(z):=\begin{pmatrix}I-z\chi(x,hD)\wh{F}&\tilde R_-\\\tilde R_+&0 \end{pmatrix}.
\end{equation*}
Provided $\mathcal{P}(z)$ is invertible with inverse \begin{equation*}
    \mathcal{E}(z):=\begin{pmatrix}E(z)&\tilde E_+(z)\\\tilde E_-(z)&E{-+}(z) \end{pmatrix},
\end{equation*} then $I-z\chi(x,hD)\wh{F}$ is invertible if and only if $E_{-+}$ is, reducing the study of the spectrum to the invertibility of the matrix $E_{-+}$. But $E_{-+}$ is (up to $\Ohi$ errors) conjugate to a simple diagonal matrix, which gives the desired spectrum. To construct the operators $\tilde R_-, \tilde R_+$, we use the now standard approach as outlined by Sj\"{o}strand and Zworski in Section 3.4 of \cite{SZ}. The main difficulty is showing well-posedness; in other words, showing that $\mathcal{P}(z)$ is invertible. This is achieved through careful bounding of $\chi(x,hD)\hat{F}$ on the range of $I-\tilde R_-\tilde R_+$ with respect to an unusual norm, which is different from but fulfills an analogous role to the one seen in \cite{Gerard}. The necessary inequalities are collected and proven in Section \ref{s:prelims}.

To solve the corresponding problem on the quantized torus, we make use of ``sampling operators" $S_{\psi}:L^2(\R)\to H_N$ defined in Section \ref{ss:transplants}. The power of such operators comes from their interplay with others; in particular, $S_{\psi}$ and $S_{\psi}S_{\psi}^*$ act as pseudodifferential operators on $H_N$ as a subset of $\Sc'(\R)$, and hence obey the rules of pseudodifferential calculus. They can then be microlocally cut off to form ``spectral operators" $T:L^2(\R)\to H_N$ and $T:H_N\to L^2(\R)$ which map our Grushin problem on $L^2(\R)$ to one on $H_N$. The study of the spectrum can then be carried out exactly as it was for Theorem \ref{mainL2}.

For completeness, we also provide an analysis of the ``maximally open" or ``nontrapping" case, corresponding to the classical situation of no periodic orbits, such that every path escapes to infinity. It is not surprising that in this case the eigenvalues will decay to $0$ as $h\to 0$. In fact, we may strengthen that result by providing a superpolynomial rate of decay. Though an analogous result should hold on $L^2(\R)$, we only prove this fact on the quantized torus.

\begin{thm}\label{NTDecayRate}
Let $\wh M_N$ be a metaplectic operator on $H_N$ with eigenvalues $\lambda, \lambda^{-1}$ of $M$ where $\lambda>1$, and let $\nu\in\cinf_0(\T^2)$ equal to $0$ near the origin with support on a small enough neighborhood of $0$. Then
\begin{equation*}
    \sup_{\lambda\in\Spec \Op_N\nu\wh{M}_N}|\lambda|=\Ohi.
\end{equation*}
\end{thm}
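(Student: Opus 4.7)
The plan is to iterate the operator, invoke the exact Egorov theorem for metaplectic operators, and conclude via the spectral radius formula. Write $A_h := \Op_N\nu\,\wh{M}_N$, and note that exact Egorov gives $\wh{M}_N^k\Op_N(a)\wh{M}_N^{-k} = \Op_N(a\circ M^{-k})$. Iterating $A_h$ yields
\begin{equation*}
    A_h^n = \Op_N(\nu)\,\Op_N(\nu\circ M^{-1})\cdots \Op_N(\nu\circ M^{-(n-1)})\,\wh{M}_N^n.
\end{equation*}
By the semiclassical pseudodifferential calculus on $H_N$ from Section~\ref{ss:microlocalHN}, the composition of these $n$ cutoff operators is itself a pseudodifferential operator whose full symbol, including every term of the asymptotic expansion, is supported in the iterated intersection $K_n := \bigcap_{k=0}^{n-1}M^k(\supp\nu)$---the set of $x$ whose backward $M$-orbit of length $n$ lies entirely in $\supp\nu$.

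Next I would argue that $K_{N_0}=\emptyset$ for some uniform $N_0$ depending only on the dynamics. The hypothesis places $\supp\nu$ in an annulus $\{\rho'\le|v|\le\rho\}$ with $\rho<1/(4\lambda\|Q\|^2)$ and some $\rho'>0$, so the analysis in Section~\ref{ss:catmap} applies. Decomposing $v=a_1v_1+a_2v_2$ in the eigenbasis of $M$ one has $M^nv=\lambda^na_1v_1+\lambda^{-n}a_2v_2$. Two cases arise: if $|a_1|$ is bounded below by some threshold $\delta(\rho,\rho')$, the orbit exits through the outer boundary of the ball in time $O(\log_\lambda(\rho/|a_1|))$ exactly as in the proof of the proposition above; if $|a_1|<\delta$ then the stable component dominates, and $\|M^nv\|$ falls below $\rho'$ within $O(\log_\lambda(\rho/\rho'))$ iterations, forcing the orbit to leave $\supp\nu$ through the inside of the annulus. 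Running the same argument with $M^{-1}$ in place of $M$ gives the analogous bound for backward orbits, producing a uniform $N_0$ with $K_{N_0}=\emptyset$.

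Since every term in the symbol expansion of the composition then vanishes identically, the standard remainder estimate of the semiclassical calculus yields that the product of the $N_0$ cutoff operators is $\Ohi$ in operator norm. Because $\wh{M}_N$ is unitary, this gives $\|A_h^{N_0}\|=\Ohi$, so by the spectral radius formula
\begin{equation*}
    \sup_{\mu\in\Spec A_h}|\mu|\le\|A_h^{N_0}\|^{1/N_0}=\Ohi.
\end{equation*}
The main obstacle is the dynamical step: obtaining the uniform escape time $N_0$ requires balancing two distinct exit mechanisms---outward expansion along the unstable direction versus inward shrinking below $\rho'$ along the stable direction---and relies crucially on $\nu$ vanishing near~$0$, since without this, trajectories along the stable manifold of~$0$ would remain in $\supp\nu$ indefinitely as $M^nv\to 0$.
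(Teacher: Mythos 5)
Your argument is correct, and it takes a genuinely different route from the paper. You iterate the operator, invoke exact Egorov to rewrite $A_h^n$ as a product of $n$ cutoff operators $\Op_N(\nu\circ M^{-k})$ times the unitary $\wh{M}_N^n$, observe that the full asymptotic symbol of that product lives in $K_n=\bigcap_{k<n}M^k(\supp\nu)$, argue dynamically that $K_{N_0}=\emptyset$ for a uniform $N_0$ determined by the annulus $\{\rho'\le|v|\le\rho\}$ and the hyperbolicity of $M$, and conclude via $\sup_{\mu\in\Spec A_h}|\mu|\le\|A_h^{N_0}\|^{1/N_0}=\Ohi$. The paper instead uses a microlocal escape function: it conjugates $\Op_N\nu\wh{M}_N$ by $e^{t\Op_Ng}$ with $g=x^2-\xi^2$ near $\supp\nu$, uses Taylor expansion of the adjoint action plus Gronwall to bound the Gaussian-type conjugation factor $W(t)$, applies G\aa rding to get $\|e^{-t(\Op_Ng-\Op_N((M^{-1})^*g))}\|\le e^{-tc_0/2}$ on the relevant microsupport, and takes $t=K'\log(1/h)$. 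Both arguments ultimately exploit the same dynamical fact — no trajectory can linger in an annulus around the hyperbolic fixed point — but your route is more elementary (no exponential conjugation estimates, no G\aa rding) at the cost of being tied to the exact linearity of the map via exact Egorov, whereas the paper's escape-function technique is the discrete-time version of the standard nontrapping resolvent argument and extends more readily to nonlinear or less explicit settings.

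One spot to tighten when writing this up: your uniformity of the escape time $N_0$ should be argued a bit more carefully, including the case split between the two exit mechanisms and the choice of the threshold $\delta(\rho,\rho')$, and you should note explicitly that the smallness of $\rho$ (as guaranteed by the hypotheses) prevents the backward orbit from wrapping around the torus before it leaves the annulus — otherwise the comparison $\|M^{-k}x\|$ to $\rho'$ and $\rho$ could be corrupted by the identification mod $\Z^2$. A clean alternative is the compactness argument: the $K_n$ are nested compact sets, and if all were nonempty a point $x$ with $M^{-k}x\in\supp\nu$ for all $k\ge0$ would exist; lifting to $\R^2$ (using $\rho<1/(2\|M^{-1}\|)$ to avoid wrapping) forces $x$ onto the stable manifold of $M^{-1}$, so $M^{-k}x\to0$, contradicting $\|M^{-k}x\|\ge\rho'$.
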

A discussion and proof of Theorem \ref{NTDecayRate} is given in Section \ref{s:rates}.

Finally, we discuss how small the support of $\chi$ must be in Theorem \ref{mainHN}. We shall show that there is a constant $c$ such the results holds for $\supp\chi,\supp\nu$ respectively contained in a $\frac{c}{\lambda\|Q\|^2}$-neighborhood of the origin, where $Q$ is the matrix of eigenvectors of $M$, normalized to have determinant $1$ (this doesn't quite fix $Q$ but bounds its norm below). The reciprocal dependence on $\lambda$ and $\|Q\|^2$ matches the intuition that we are isolating a single fixed point in a region with no other periodic orbits, as discussed in Section \ref{ss:catmap}.


\subsection{Numerical Experiments}

An additional motivation for working on the finite-dimensional space $H_N$ is the ease by which one may perform numerics. Let $M$ be the original Arnol'd cat map $$M=\begin{pmatrix} 2&1\\1&1\end{pmatrix}$$ and for ease of computation, let $\Op_h\chi=\Op_h^L(\rho(x)\rho(\xi))$ for $\rho\in\cinf(\R)$ and $\Op_h^L$ the left quantization as defined in Sections \ref{ss:microlocal} and \ref{ss:microlocalHN}. By (\ref{Coq}), $\Op_h\chi$ is the quanitzation of a compactly supported bump function up to $\Ohi$ errors. Restricting to $H_N$, we see in Figure \ref{f:Eigs} the largest four eigenvalue real parts of $\Op_N\chi\wh{M}_N$, as functions of $N$. The imaginary parts rapidly decay to $0$ and are omitted from the plot.

\begin{figure}
\includegraphics[scale=0.75]{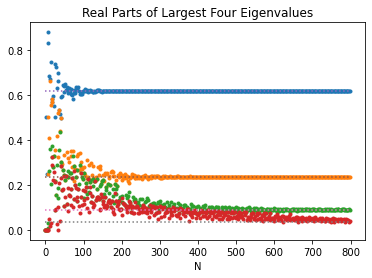}
\caption{Real parts of four largest eigenvalues of $\Op_N\chi\wh{M}_N$ for original Arnol'd cat map $M$ and $\chi=1$ near the origin. The limits predicted by Theorem \ref{mainHN} are given as dotted lines.}
\label{f:Eigs}
\end{figure}

In the nontrapping case, we let $\Op_N\nu=\Op_N^L((\rho(x)-\rho(2x))(\rho(\xi)-\rho(2\xi)))$ as an example of the cutoff operator considered in Theorem \ref{NTDecayRate}. The first plot in Figure \ref{f:NEigs} shows rapid decay to zero of the largest eigenvalue of $\Op_N\nu\wh{M}_N$ on $H_N$, exactly as was predicted by Theorem \ref{NTDecayRate}.

To further show the rate of decay of the spectrum, the second plot in Figure \ref{f:NEigs} shows the norms of largest eigenvalues of $\Op_N\nu\wh{M}_N$ plotted on a log-log plot.
\begin{figure}
    \includegraphics[scale=0.55]{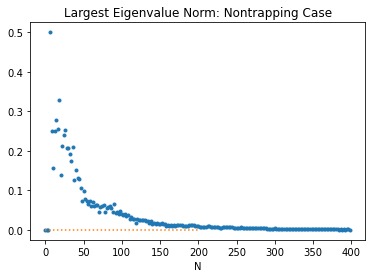}
    \includegraphics[scale=.55]{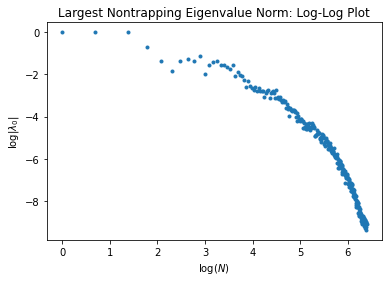}
    \caption{Left: Norm of largest eigenvalue of $\Op_N\nu\wh{M}_N$, with $\nu=0$ near the origin. Right: Log-log plot of norm of the largest eigenvalues of $\Op_N\nu\wh{M}_N$, which illustrates Theorem \ref{NTDecayRate}.}
    \label{f:NEigs}
\end{figure}
We see a decrease to $-\infty$ in the slope of the curve, which points to an $\Ohi$ decay rate of $|\lambda_0|$ consistent with Theorem \ref{NTDecayRate}.

\section{The Operator on the Real Line}
\label{s:L2}

\subsection{Review of Pseudodifferential and Metaplectic Operators}
\label{ss:microlocal}
We recall briefly the semiclassical pseduodifferential and metaplectic calculi on $\R^n$, which will be used repeatedly throughout the paper. The reader may refer to the book by Zworski \cite{Zworski} for a complete introduction. 

A function $m:T^*\R^n\to(0,\infty)$ is called an \emph{order function} if $m(x,\xi)\le C\la |x-y,\xi-\eta|\ra^Nm(y,\eta)$ for some $C,N$. The corresponding \emph{symbol class} of $m$ is defined as $$S(m):=\{a\in\cinf(T^*\R^n):\forall \alpha,\beta\in\Z^n,\, |\dd_x^{\alpha}\dd_{\xi}^{\beta}a(x,\xi)|\le C_{\alpha\beta}m(x,\xi)\}.$$

Note that elements of $S(m)$ (called \emph{symbols}) may depend on $h$, and in such a case the constant $C_{\alpha\beta}$ in the definition must be uniform in $h$.

The \emph{Weyl quantization} of a symbol is an operator $\Op_ha:\Sc(\R^n)\to \Sc'(\R^n)$ (sometimes written $a(x,hD)$) given by \begin{equation*}(\Op_h(a)u)(x):=\frac{1}{(2\pi h)^n}\iint e^{\frac{i}{h}\la x-y,\xi\ra}a\left(\frac{x+y}{2},\xi\right)u(y)\,dy\,d\xi.\end{equation*} These operators are called \emph{semiclassical pseudodifferential operators}.

A classical result of Calderon and Vaillancourt (see \cite{Zworski}) gives that for $a\in S(1)$, $\Op_ha:L^2(\R^n)\to L^2(\R^n)$ is bounded with norm independent of $h$. In addition, if $a$ is real, $\Op_ha$ is a (possibly unbounded) symmetric on $L^2$. In general, the formal adjoint of $\Op_ha$ is $(\Op_ha)^*$.

We may also consider the \emph{left quantization} (also called the \emph{standard quantization}), given by
\begin{equation*}
    (\Op^L_h(a)u)(x):=\frac{1}{(2\pi h)^n}\iint e^{\frac{i}{h}(x-y)\xi}a\left(x,\xi\right)u(y)\,dy\,s\xi.
\end{equation*}
We can go from one quantization to the other via the \emph{change of quantization formula}
\begin{equation*}
    \Op_ha=\Op^L_ha_L
\end{equation*}
with
\begin{equation}\label{Coq}
    a_L=e^{\frac{ih}{2}\la D_x,D_{\xi}\ra}a
\end{equation}

Let $a\in S(m_1), b\in S(m_2)$. The composition of Weyl quantizations is given by the following formula:
\begin{equation*}
    \Op_ha\Op_hb=\Op_h{a\#b}
\end{equation*}
where $a\#b\in S(m_1m_2)$ is the \emph{Moyal product} given by
\begin{equation*}
  a\#b(x,\xi):= e^{ih\sigma(D_x,D_{\xi},D_y,D_{\eta})}(a(x,\xi)b(y,\eta))|_{y=x,\eta=\xi}
\end{equation*}
which gives the asymptotic formula
\begin{equation*}
    a\#b(x,\xi)\sim\sum_{k=0}^{\infty}\frac{i^kh^k}{k!}\sigma(D_x,D_{\xi},D_y,D_{\eta})^k(a(x,\xi)b(y,\eta))|_{y=x,\eta=\xi}
\end{equation*}
in $S(m_1m_2)$.
In particular, if $a,b\in S(1)$ have disjoint supports, then
\begin{equation*}
    \|\Op_h(a)\Op_h(b)\|_{L^2(\R^n)\to L^2(\R^n)}=\Ohi
\end{equation*}
It also follows that if $a,b\in S(1)$, the commutator of quantizations is given by
\begin{equation*}
    [\Op_ha,\Op_hb]=-ih\Op_h(\{a,b\})+O(h^3)_{L^2(\R^n)\to L^2(\R^n)},
\end{equation*}
where $\{\cdot,\cdot\}$ is the Poisson bracket.

Furthermore, if $a\in S$ is bounded below by $c_0>0$, the \emph{G\aa rding inequality} gives that for every $\epsilon>0$ there is a $C>0$ such that for $h$ small enough,
\begin{equation}\label{GI}
    \la \Op_h(a)u,u\ra\ge C(c_0-\epsilon)\|u\|_{L^2(\R^n)}.
\end{equation}
We remark that (\ref{GI}) may be strengthened to the \emph{Sharp G\aa rding inequality} and furthermore to the \emph{Fefferman-Phong inequality}, which give negative $h$-dependent lower bounds when $\epsilon=0$ (see Theorem 4.32 in \cite{Zworski}), though we will not need such bounds for this paper.

The exponential of a pseudodifferential operator is also a pseudodifferential operator. If $g=\log m$ for order function $m$, then $e^{t\Op_hg}$ is the semiclassical quantization of some $b_t\in S(m^t)$. In particular, if $g\in\cinf_0(T^*\R^n)$, then there is a $C$ such that
\begin{equation*}
    \|e^{t\Op_hg}\|_{L^2(\R^n)\to L^2(\R^n)}\le e^{C|t|}.
\end{equation*}

Finally, we discuss the slightly less standard theory of functional calculus applied to Weyl pseudodifferential operators.

\begin{lem}\label{RnFunCalc}
If $a\in S(1)$ or $a\in S(m)$ is elliptic in the sense that $a\ge \frac{1}{C}m-C$, and $\phi\in\cinf_0(T^*\R^n)$, then $\phi(\Op_ha)$ is a Weyl pseudodiffernetial operator with symbol in $S(m^{-k})$ for every $k$. Suppose $b\in S^1$ such that $\supp b\cap a^{-1}(\supp\phi)=\emptyset$. Then $\phi(\Op_h a)\Op_hb=\Ohi_{L^2(\R^n)\to L^2(\R^n)}$.
\end{lem}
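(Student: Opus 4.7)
The plan is to apply the Helffer--Sj\"ostrand functional calculus. Since $\Op_h a$ is self-adjoint (as the Weyl quantization of a real symbol), picking an almost analytic extension $\tilde\phi\in\cinf_0(\C)$ of $\phi$ with $\bar\partial\tilde\phi(z)=O_N(|\Im z|^N)$ for every $N$ yields
\begin{equation*}
\phi(\Op_h a) = -\frac{1}{\pi}\int_\C \bar\partial\tilde\phi(z)\,(z-\Op_h a)^{-1}\,dL(z).
\end{equation*}
First I would build a semiclassical parametrix $q_z\sim \sum_k h^k q_{z,k}$ with $q_{z,0}=(z-a)^{-1}$ and each $q_{z,k}\in S(m^{-1})$, satisfying $(z-\Op_h a)\Op_h q_z = I+\Ohi$, with the crucial bound that the $S(m^{-1})$ seminorms of the $q_{z,k}$ blow up at most polynomially in $|\Im z|^{-1}$ as $z\to\R$. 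In the $S(1)$ case $\Op_h a$ is already bounded and one only needs $|z-a|\gtrsim|\Im z|$ uniformly; in the elliptic case one verifies directly that $(z-a)^{-1}\in S(m^{-1})$ on $\supp\tilde\phi$ with seminorms polynomial in $|\Im z|^{-1}$, then iterates to correct the $O(h)$ errors produced by the Moyal expansion.

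Inserting this parametrix into the contour integral then produces a symbol $c$ for $\phi(\Op_h a)$: the rapid vanishing of $\bar\partial\tilde\phi$ on $\R$ defeats the polynomial blowup of the $q_{z,k}$, and the Cauchy--Pompeiu formula recovers $-\frac{1}{\pi}\int_\C\bar\partial\tilde\phi(z)(z-a)^{-1}\,dL(z)=\phi(a)$ as the leading term. Inductively, each subsequent term $c_k$ in the expansion of $c$ is a polynomial in derivatives of $a$ multiplied by some $\phi^{(j)}(a)$, and hence vanishes outside $a^{-1}(\supp\phi)$. In the elliptic case this set is compact, and in either case each $c_k$ is therefore compactly supported in $(x,\xi)$; this gives $c\in S(m^{-k})$ for every $k$, and moreover, modulo $\Ohi$ errors in $S(1)$, the full symbol $c$ is supported in any prescribed open neighborhood of $a^{-1}(\supp\phi)$.

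For the second assertion, choose $\psi\in\cinf_0(T^*\R^n)$ equal to $1$ near $a^{-1}(\supp\phi)$ with $\supp\psi\cap\supp b=\emptyset$, which is possible by hypothesis. Writing $c=\psi c+(1-\psi)c$, the localization just established makes $(1-\psi)c=\Ohi$ in $S(1)$, so Calder\'on--Vaillancourt yields $\Op_h((1-\psi)c)\Op_h b=\Ohi$ in $L^2$ operator norm, while $\psi c$ has support disjoint from $\supp b$, so the disjoint-support principle recalled in Section \ref{ss:microlocal} gives $\Op_h(\psi c)\Op_h b=\Ohi$. Summing proves the claim. The main obstacle is tracking the polynomial blowup of the parametrix seminorms as $z$ approaches $\R$, since this is precisely the growth that the Taylor vanishing of $\bar\partial\tilde\phi$ must swallow for the Helffer--Sj\"ostrand integral to produce a symbol in the asserted class; once this bookkeeping is done, the support-localization of the $c_k$ is essentially automatic from the compact support of $\phi$.
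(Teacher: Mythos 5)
Your proposal is correct and follows essentially the same route as the paper, which likewise invokes the Helffer--Sj\"ostrand formula together with the semiclassical resolvent expansion from Dimassi--Sj\"ostrand to obtain an asymptotic symbol expansion whose terms are supported in $a^{-1}(\supp\phi)$, and then concludes via the disjoint-support principle. The only small inaccuracy is the claim that each $c_k$ is compactly supported ``in either case'': in the $S(1)$ case $a^{-1}(\supp\phi)$ need not be compact, though this is harmless here since only disjointness from $\supp b$ (and positive distance to it) is actually used.
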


The proof of Lemma \ref{RnFunCalc} follows from the Helffer-Sj\"ostrand formula ((8.13) in \cite{DS}), together (8.15–8.16) in \cite{DS} which provide the asymptotic expansion for the symbol of $\phi(\Op_ha)$ as 
$\sum_{j=0}^{\infty}p_j$ with
\begin{equation*}
    p_j(x,\xi)=\frac{1}{(2j)!}\dd_t^{2j}\left(q_j(x,\xi,t)\phi(t)\right)|_{t=a_0(x,\xi)},
\end{equation*}
where $a_0=\lim_{h\to0}a$ and $q_j$ are symbols coming from terms in the semiclassical expansion of the resolvent $(z-\Op_ha)^{-1}$.

We now discuss the calculus of metaplectic operators. Let $M=\begin{pmatrix}A&B\\C&D \end{pmatrix}\in Sp(2n,\R)$ be a real symplectic matrix. Then the \emph{semiclassical metaplectic operators associated to $M$} are $\wh{M}:L^2(\R^n)\to L^2(\R^n)$ given in \cite{Zworski}\cite{DJ}. In the special case when $\det A\neq 0$, we have (up to a global phase) that
\begin{equation*}
    \wh{M}u(y)=\frac{|\det A|^{-\frac{1}{2}}}{(2\pi h)^n}\iint e^{\frac{i}{h}\left(\frac{1}{2}\la CA^{-1}x,x\ra+\la A^{-1}x,\eta\ra-\frac{1}{2}\la A^{-1}B\eta,\eta\ra-\la y,\eta\ra\right)}u(y)\,dy\,d\eta.
\end{equation*}

These operators are unitary, and form a representation of the \emph{metaplectic group}, which is a double cover of the symplectic group. The importance of metaplectic operators is that they ``quantize" linear changes of phase variables, in the sense given by the \emph{exact Egorov theorem} \cite{Zworski}\cite{DJ}
\begin{equation*}
    \Op_h(a)\wh{M}=\wh{M}\Op_h(a\circ M).
\end{equation*}

\subsection{Preliminary Inequalities}
\label{s:prelims}

We begin by studying the operator $\chi(x,hD)\wh{F}$ on $\R$, where $\wh{F}u(x)=\frac{1}{\sqrt{\lambda}}u\left(\frac{x}{\lambda}\right)$ and $\chi\in\cinf(T^*\R)$. A similar operator on different spaces was considered by Gerard \cite{Gerard} and on yet different spaces by Faure and Tsujii  \cite{FT}.

For $m\in\N$, we define the following weighted Sobolev-type norm:
\begin{equation*}
    \|u\|_{Y}^2:=\sum_{k=0}^m\|\la x\ra^{-m}u^{(k)}\|_{L^2(\R^n)}^2.
\end{equation*}
Denote by $Y$ the closure of $\cinf_0(\R)$ in the $Y$-norm.

Fix $\Lambda>0$, and define the ``cut-off" version of the $Y$-norm as follows:
\begin{equation*}
    \|u\|_B^2:=\|U_h\mathbbm{1}_{P_h\le\Lambda}u\|_Y^2+\|\mathbbm{1}_{P_h\ge\Lambda}u\|_{L^2(\R)}^2.
\end{equation*}

Here $P_h=P(x,hD)$ is the semiclassical harmonic oscillator $h^2\Delta+x^2$, and $U_hu(x)=h^{\frac{1}{4}}u(h^{\frac{1}{2}}x)$ is a unitary semiclassical rescaling. We remark that the $B$-norm depends on $h,\Gamma$, and $m$, though we generally suppress these constants in writing.

In fact, the $B$-norm is just a dressed up $L^2$ norm, as the following proposition shows.

\begin{prop}\label{NormEquivalence}
There is a $C=C(m, \Lambda)$ such that
\begin{equation*}
    C^{-1}h^{\frac{m}{2}}\|u\|_{L^2(\R)}\le \|u\|_B\le Ch^{-\frac{m}{2}}\|u\|_{L^2(\R)}.
\end{equation*}
\end{prop}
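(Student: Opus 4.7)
The plan is to exploit the orthogonal decomposition $L^2(\R)=\Ran\mathbbm{1}_{P_h\le\Lambda}\oplus\Ran\mathbbm{1}_{P_h>\Lambda}$ determined by the semiclassical harmonic oscillator, which gives
\begin{equation*}
\|u\|_{L^2}^2=\|\mathbbm{1}_{P_h\le\Lambda}u\|_{L^2}^2+\|\mathbbm{1}_{P_h>\Lambda}u\|_{L^2}^2.
\end{equation*}
Since the high-energy summand appears identically in $\|u\|_B^2$, the whole proposition reduces to a two-sided comparison of $\|U_hv\|_Y$ with $\|v\|_{L^2}$ for $v:=\mathbbm{1}_{P_h\le\Lambda}u$. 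A direct rescaling check shows $U_hP_hU_h^{-1}=hH$, where $H=-\dd_x^2+x^2$ is the standard harmonic oscillator, so $U_hv$ ranges over the finite-dimensional space $V_{k_0}:=\spn\{\psi_0,\dots,\psi_{k_0}\}$ spanned by the standard Hermite functions, with $k_0=\lfloor(\Lambda/h-1)/2\rfloor\lesssim h^{-1}$. I therefore plan to prove the equivalent finite-dimensional claim: for every $w\in V_{k_0}$,
\begin{equation*}
C^{-1}h^{m/2}\|w\|_{L^2}\le\|w\|_Y\le Ch^{-m/2}\|w\|_{L^2}.
\end{equation*}

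Both inequalities will be driven by a single ladder estimate. From the classical recurrences $x\psi_k=\sqrt{k/2}\,\psi_{k-1}+\sqrt{(k+1)/2}\,\psi_{k+1}$ and the analogous identity for $\dd_x\psi_k$, multiplication by $x$ and differentiation each send $V_k$ into $V_{k+1}$ with operator norm $O(\sqrt{k+1})$. Iterating $j\le m$ times yields $\|x^jw\|_{L^2}+\|\dd_x^jw\|_{L^2}\lesssim k_0^{j/2}\|w\|_{L^2}\lesssim h^{-j/2}\|w\|_{L^2}$ for every $w\in V_{k_0}$. I expect this iteration to be the only genuinely technical point: $V_{k_0}$ is not invariant under $x$ or $\dd_x$, but the $j$-fold product only enlarges the span by $j$ indices, and the cumulative gain $\prod_{i=1}^{j}\sqrt{k_0+i}\lesssim k_0^{j/2}$ remains controlled for fixed $j\le m$.

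The upper bound then follows directly: $\|\la x\ra^{-m}w^{(j)}\|_{L^2}\le\|\dd_x^jw\|_{L^2}\lesssim h^{-m/2}\|w\|_{L^2}$ for each $0\le j\le m$, so summing in $j$ gives $\|w\|_Y\lesssim h^{-m/2}\|w\|_{L^2}$. For the lower bound, I will apply Cauchy--Schwarz in the form
\begin{equation*}
\|w\|_{L^2}^2=\int\la x\ra^{-m}|w|\cdot\la x\ra^m|w|\,dx\le\|\la x\ra^{-m}w\|_{L^2}\,\|\la x\ra^mw\|_{L^2},
\end{equation*}
recognize the first factor as bounded by $\|w\|_Y$ (it is the $j=0$ summand), and bound the second by expanding $\la x\ra^{2m}=(1+x^2)^m$ as a polynomial in $x^2$ and applying the ladder estimate to each resulting $\|x^jw\|_{L^2}$; this yields $\|\la x\ra^mw\|_{L^2}\lesssim h^{-m/2}\|w\|_{L^2}$. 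Dividing out $\|w\|_{L^2}$ gives $\|w\|_{L^2}\lesssim h^{-m/2}\|w\|_Y$, and combining with the orthogonal decomposition at the outset completes the proof.
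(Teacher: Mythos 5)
Your argument is correct, and it takes a genuinely different route from the paper's. Both proofs begin the same way, by subtracting the high-energy piece $\|\mathbbm{1}_{P_h\ge\Lambda}u\|_{L^2}^2$ and reducing to a comparison between $\|U_h\mathbbm{1}_{P_h\le\Lambda}u\|_Y$ and $\|\mathbbm{1}_{P_h\le\Lambda}u\|_{L^2}$. From there the paper argues via semiclassical pseudodifferential calculus: for the lower bound it writes $w=\tilde\phi(P_h)w$, observes that $\la x\ra^{-m}\phi(P_h)+I-\phi(P_h)$ has an elliptic $S(1)$ symbol, and invokes elliptic invertibility; for the upper bound it uses that $(hD)^k\phi(P_h)$ is bounded on $L^2$ because $\phi(x^2+\xi^2)\in S(\la\xi\ra^{-k})$. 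You instead conjugate by $U_h$ to see that $\mathbbm{1}_{P_h\le\Lambda}$ becomes the spectral projector onto $\spn\{\psi_0,\dots,\psi_{k_0}\}$ for the standard harmonic oscillator with $k_0\lesssim\Lambda/h$, and then control $\|x^jw\|_{L^2}$ and $\|\dd_x^jw\|_{L^2}$ by $k_0^{j/2}\|w\|_{L^2}\lesssim h^{-j/2}\|w\|_{L^2}$ through the Hermite ladder recurrences. The lower bound then drops out of a single Cauchy--Schwarz, and all constants depend only on $m$ and $\Lambda$ as required. Your proof is more elementary and explicit, trading the functional/symbolic calculus machinery for the concrete finite-dimensional structure of the truncated oscillator eigenspace; the paper's approach is less reliant on the exact spectral theory of $P_h$ and is more in keeping with the pseudodifferential toolbox used throughout the rest of the argument (for instance, essentially the same symbol-ellipticity step reappears in Proposition \ref{MainEstimate}). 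Either way the conclusion is the same, and your ladder iteration correctly tracks the fact that $V_{k_0}$ is not invariant under $x$ and $\dd_x$ — the span grows by only finitely many indices for fixed $j\le m$, so the $\prod_{i\le j}\sqrt{k_0+i}\lesssim k_0^{j/2}$ bound holds uniformly.
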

The purpose of Proposition \ref{NormEquivalence} is to be able to write $\Ohi\|u\|_{L^2(\R)}$ and $\Ohi\|u\|_B$ indistinguishably, which we do throughout the remainder of the paper.

\begin{proof}
We first prove the lower bound. By subtracting off $\|\mathbbm{1}_{P_h\ge\Lambda}u\|_{L^2(\R)}^2$ from both sides, we must only show that for some $c>0$,
\begin{equation}\label{FirstIneqReduced}
    \|U_hw\|_Y^2\ge ch^m\|w\|_{L^2(\R)}^2
\end{equation}
for $w=\mathbbm{1}_{P_h\le\Lambda}u$. But
\begin{align*}
    \|U_hw\|_Y^2&=\sum_{k=0}^m\|\la x\ra^{-m}\dd^k(U_hw)\|_{L^2(\R)}^2
    \ge\|\la x\ra^{-m}U_hw\|_{L^2(\R)}^2
    =h^{\frac{1}{2}}\int\la x\ra^{-2m}\left|w\left(h^{\frac{1}{2}}x\right)\right|^2\,dx\\
    &=\int\left\la h^{-\frac{1}{2}}x\right\ra^{-2m}|w(x)|^2\,dx=\left\|\left\la h^{-\frac{1}{2}}x\right\ra^{-m}w\right\|_{L^2(\R)}^2\ge h^m\|\la x\ra^{-m}w\|_{L^2(\R)}^2.
\end{align*}
Now let $\phi,\tilde\phi\in\cinf_0(\{|x|<2\Lambda\})$ such that $\tilde\phi=1$ on $\{|x|\le \Lambda\}$, $\phi=1$ on $\supp\tilde\phi$. Then $w=\tilde\phi(P_h)w$, and we have
\begin{equation*}
    \left\|\la x\ra^{-m}\right\|_{L^2(\R)}^2=\left\|\la x\ra^{-m}\tilde\phi(P_h)w\right\|_{L^2(\R)}^2=\left\|\left(\la x\ra^{-m}\phi(P_h)+I-\phi(P_h)\right)\tilde\phi(P_h)w\right\|_{L^2(\R)}^2.
\end{equation*}

Consider $A=\la x\ra^{-m}\phi(P_h)+I-\phi(P_h)$ as a semiclassical pseudodifferential operator. We have by the Helffer-Sj\"ostrand construction (see \cite{DS} Chapter 8) that $\phi(P_h)$ is a smoothing operator with principal symbol $\phi(x^2+\xi^2)$, so in particular is in $S(1)$, and hence $A$ has $S(1)$ symbol equal to $\la x\ra^{-m}\phi(x^2+\xi^2)+1-\phi(x^2+\xi^2)+O_{L^2\to L^2}(h)$. This symbol is elliptic, so we get for $h$ small enough that:
\begin{equation*}
    \left\|\left(\la x\ra^{-m}\phi(P_h)+I-\phi(P_h)\right)\tilde\phi(P_h)w\right\|_{L^2(\R)}\ge C\|\tilde\phi(P_h)w\|_{L^2(\R)}
    =\|w\|_{L^2(\R)}.
\end{equation*}
Putting the inequalities together gives (\ref{FirstIneqReduced}).

 We now prove the upper bound. As before, by subtracting off $\|\mathbbm{1}_{P_h\ge\Lambda}u\|_{L^2(\R)}^2$ from both sides, we must only show that
\begin{equation}\label{SecondIneqReduced}
    \|U_hw\|_Y^2\le Ch^{-m}\|w\|_{L^2(\R)}^2
\end{equation}
for $w=\mathbbm{1}_{P_h\le\Lambda}u$. But
\begin{align*}
    \|U_hw\|_Y^2=\sum_{k=0}^m\left\|\la x\ra^{-m}\dd^k(U_hw)\right\|_{L^2(\R)}^2&\le\sum_{k=0}^m\left\|\dd^k(U_hw)\right\|_{L^2(\R)}^2=\sum_{k=0}^m\left\|U_hh^{\frac{k}{2}}\dd^kw\right\|_{L^2(\R)}^2\\
    &=\sum_{k=0}^mh^{-k}\left\|(hD)^kw\right\|_{L^2(\R)}^2.
\end{align*}
Let $\phi$ be as before, so $(hD)^kw=(hD)^k\phi(P_h)w$. As $\phi(P_h)$ has symbol $\phi(x^2+\xi^2)\in S(\la\xi\ra^{-k})$, $(hD)^k\phi(P_h)$ has symbol in $S(1)$ and hence is bounded on $L^2(\R)$. This gives that
\begin{equation*}
    \|U_hw\|_Y^2\le\sum_{k=0}^mh^{-k}\left\|w\right\|_{L^2(\R)}^2\le Ch^{-m}\left\|w\right\|_{L^2(\R)}^2
\end{equation*}
which is (\ref{SecondIneqReduced}).
\end{proof}

The key property of the $Y$ norm is that we can find sharp bounds for $\wh{F}$ on certain closed subspaces. Remark that by Sobolev embedding, functions in $Y$ have $C^{K-1}$ smoothness for $K\le m$, so we may define the subspace
\begin{equation}\label{VKDef}
    V_K:=\{u\in Y:u^{(k)}(0)=0\quad\forall k<K\}.
\end{equation}
The following inequality is inspired by Proposition 3.5 in \cite{Gerard}. Although seemingly elementary, is in some sense the crux of the entire argument to follow.
\begin{lem}\label{YNormEstimate}
There is a $C_0$ such that for $K\le m$, $u\in V_K$,
\begin{equation}\label{Lemma2Ineq}
\|\wh{F}u\|_Y\le C_0\lambda^{-\frac{M}{4}}\|u\|_Y.
\end{equation}
\end{lem}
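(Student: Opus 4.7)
The plan is to expand the $Y$-norm explicitly. Since $(\wh F u)^{(k)}(x) = \lambda^{-1/2-k}u^{(k)}(x/\lambda)$, the substitution $y = x/\lambda$ gives
\[\|\wh F u\|_Y^2 = \sum_{k=0}^m \lambda^{-2k}\int_\R \la \lambda y\ra^{-2m}|u^{(k)}(y)|^2\,dy.\]
The factor $\lambda^{-2k}$ already supplies decay when $k$ is large; the task is to show that, after multiplication by $\lambda^{-2k}$, every term in the sum is bounded by $C\lambda^{-2K}\|u\|_Y^2$, so that the whole sum is $\ls \lambda^{-2K}\|u\|_Y^2$.

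For each $k$ I would split the integral into the regions $|y|>1$ and $|y|\le 1$. On $|y|>1$ the weight satisfies $\la \lambda y\ra^{-2m}\le C\lambda^{-2m}\la y\ra^{-2m}$, so this part contributes at most $C\lambda^{-2k-2m}\|u\|_Y^2$, which is much more decay than required. All the delicate work therefore takes place on $|y|\le 1$, where the weight lies between $(1+\lambda^2)^{-m}$ and $1$ and so cannot by itself produce the required decay -- it must be extracted from the vanishing of $u$ at the origin.

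On $|y|\le 1$, for $k\ge K$ the claim is easy: $\la \lambda y\ra^{-2m}\le 1$ together with $\|u^{(k)}\|_{L^2(|y|\le 1)}\ls \|u\|_Y$ (since $\la y\ra^{-m}$ is bounded below on the unit interval) gives a contribution $\ls \lambda^{-2k}\|u\|_Y^2 \le \lambda^{-2K}\|u\|_Y^2$. For $k<K$ I would exploit the defining vanishing of $V_K$ through the Taylor remainder
\[u^{(k)}(y)=\frac{1}{(K-k-1)!}\int_0^y (y-t)^{K-k-1}u^{(K)}(t)\,dt,\]
and Cauchy--Schwarz to obtain $|u^{(k)}(y)|^2\le C|y|^{2(K-k)-1}\|u^{(K)}\|_{L^2([-1,1])}^2 \ls |y|^{2(K-k)-1}\|u\|_Y^2$ for $|y|\le 1$. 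The remaining integral, after the rescaling $z=\lambda y$, is
\[\int_{|y|\le 1}\la \lambda y\ra^{-2m}|y|^{2(K-k)-1}\,dy = \lambda^{-2(K-k)}\int_{|z|\le \lambda}\la z\ra^{-2m}|z|^{2(K-k)-1}\,dz \ls \lambda^{-2(K-k)},\]
where the convergence of the $z$-integral on all of $\R$ uses $K-k\le m$, with the sole borderline case $K=m$, $k=0$ producing at worst a harmless $\log\lambda$ factor.

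Combining the pieces, each term in the sum over $k$ is bounded by $C\lambda^{-2K}\|u\|_Y^2$, which yields $\|\wh F u\|_Y \ls \lambda^{-K}\|u\|_Y$ and so the claimed inequality. The main conceptual obstacle is the exact cancellation in the $k<K$ case: the $\lambda^{-2(K-k)}$ gained from the rescaled weight integral precisely compensates the $\lambda^{-2k}$ lost to differentiation, collapsing to a clean $\lambda^{-2K}$ independent of $k$. This is the ``crux'' alluded to in the excerpt, since it is what ultimately drives the eigenvalue asymptotics $\mu_k^{-1}\sim\lambda^{(2k+1)/2}$ of Theorems \ref{mainL2} and \ref{mainHN}.
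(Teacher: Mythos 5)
Your approach shares the skeleton of the paper's proof --- expand the $Y$-norm, split the $y$-integral radially, and for $k<K$ use the Taylor remainder implied by $u^{(j)}(0)=0$, $j<K$, to control $u^{(k)}$ by $u^{(K)}$ --- but you make a genuinely different, and in one respect sharper, choice inside it. On the inner region you \emph{keep} the weight $\la\lambda y\ra^{-2m}$ and rescale $z=\lambda y$ to read off a clean $\lambda^{-2(K-k)}$ from the Beta-type integral; the paper instead bounds $\la\lambda x\ra^{-2m}\le 1$ there, losing that factor, and compensates by shrinking the split radius to $r=\lambda^{-1/4}$ (paired with $s=\lambda^{1/4}$ on the outside), recovering only $r^{2(K-k)+1}$. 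Your route therefore gives the stronger rate $\lambda^{-K}$ rather than the stated $\lambda^{-K/4}$ (the ``$\lambda^{-M/4}$'' in the lemma is a typo for $K$). Applying Cauchy--Schwarz directly to the Taylor remainder, rather than routing through Lemma \ref{sobolev_bound}'s $L^\infty$ Sobolev embedding, is also a mild simplification and avoids the derivative $u^{(K+1)}$. Your treatment of the logarithmic borderline $K-k=m$ is correct.

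There is one subtlety you should not brush past. The lemma asserts a single $C_0$ valid for all $K\le m$, and the remark after Proposition \ref{MainEstimate} flags this $m$-uniformity as load-bearing, since $m$ is later taken $\gg K$ with $K$ fixed. Several of your implicit ``$\ls$''s are not $m$-uniform: passing from $\|u^{(K)}\|_{L^2([-1,1])}$ to $\|\la y\ra^{-m}u^{(K)}\|_{L^2}$ costs $2^{m/2}$; the lower bound ``$\la y\ra^{-m}$ bounded below on $[-1,1]$'' used for $k\ge K$ is $2^{-m/2}$; and $\la\lambda y\ra^{-2m}\le C\lambda^{-2m}\la y\ra^{-2m}$ on $|y|>1$ has $C=2^m$. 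The $k\ge K$ case is easily repaired: no split is needed at all, since $\la\lambda y\ra\ge\la y\ra$ gives
\begin{equation*}
\lambda^{-2k}\int\la\lambda y\ra^{-2m}|u^{(k)}|^2\,dy\le\lambda^{-2K}\|\la y\ra^{-m}u^{(k)}\|_{L^2}^2
\end{equation*}
directly with constant $1$. The $k<K$ case is the real issue. To be fair, the paper's own proof also passes through the unweighted $\|u^{(K)}\|_{L^2([-1,1])}$, $\|u^{(K+1)}\|_{L^2([-1,1])}$ and so appears to incur the same $2^{m/2}$ before reaching $\|u\|_Y$; so this looks like a shared imprecision rather than a gap specific to your route. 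But if $m$-uniformity of $C_0$ is truly required downstream, neither argument as written delivers it, and that point deserves closer scrutiny before relying on the lemma.
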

\begin{proof}
We first write:
\begin{align*}
    \left\|\la x\ra^{-m}\partial_x^k\wh{F}u\right\|_{L^2}^2&=\lambda^{-2k}\left\|\la x\ra^{-m}\wh{F}\partial_x^ku\right\|_{L^2}^2=\lambda^{-2k-1}\int\la x\ra^{-2m}\left|u^{(k)}\left(\frac{x}{\lambda}\right)\right|^2\,dx\\
    &=\lambda^{-2k}\int\la \lambda x\ra^{-2m}|u^{(k)}(x)|^2\,dx.
\end{align*}

When $k\ge K$, we then directly see that $$\left\|\la x\ra^{-m}\partial_x^k\wh{F}u\right\|_{L^2}^2\le \lambda^{-2K}\|\la x\ra^{-m}u^{(k)}\|_{L^2}^2.$$

When $k<K$, we require an additional argument.
\begin{lem}\label{sobolev_bound}
If $r<1$, $u\in V_K$, $0\le k<K$, $$\|u^{(k)}\|_{L^2([-r,r])}\le \frac{2\sqrt{2}r^{K-k+\frac{1}{2}}}{(K-k)!}\left(\|u^{(K)}\|_{L^2([-1,1])}+\|u^{(K+1)}\|_{L^2([-1,1])}\right)$$
\end{lem}
\begin{proof}[Proof of Lemma \ref{sobolev_bound}]
We easily see that
\begin{equation}\label{in0}
\|u^{(k)}\|_{L^2([-r,r])}\le\sqrt{2r}\|u^{(k)}\|_{L^{\infty}([-r,r])}.
\end{equation}
But 
\begin{align*}u^{(k)}(x)&=\frac{1}{(K-k-1)!}\int_0^xu^{(K)}(t)(x-t)^{K-k-1}\,dt\\&=\frac{x^{K-k}}{(K-k-1)!}\int_0^1u^{(K)}(sx)(1-s)^{K-k-1}\,ds.
\end{align*}
This gives that
\begin{equation}\label{in1}
\|u^{(k)}\|_{L^{\infty}([-r,r])}\le\frac{r^{K-k}}{(K-k)!}\|u^{(K)}\|_{L^{\infty}([-r,r])}.
\end{equation}
But by the Sobolev Inequality, 
\begin{equation}\label{in2}
\|u^{(K)}\|_{L^{\infty}([-r,r])}\le 2\left(\|u^{(K)}\|_{L^2([-1,1])}+\|u^{(K+1)}\|_{L^2([-1,1])}\right).
\end{equation}

Combining (\ref{in0}), (\ref{in1}), and (\ref{in2}) gives the inequality.
\end{proof}

Fix $s=s(\lambda)\in\left(1,\sqrt{\frac{\lambda^2+1}{2}}\right)$ to be chosen later, and let $r=\sqrt{\frac{s^2-1}{\lambda-s^2}}<1$, which is chosen such that $x> r\implies \la \lambda x\ra>s\la x\ra$. Then
\begin{align*}
    \left\|\la x\ra^{-m}\dd_x\wh{F}u\right\|_{L^2}^2&=\lambda^{-2k}\int_{|x|\le r}\la \lambda x\ra^{-2m}|u^{(k)}(x)|^2\,dx+\lambda^{-2k}\int_{|x|>r}\la \lambda x\ra^{-2m}|u^{(k)}(x)|^2\,dx.\\
    &\le \lambda^{-2k}\int_{|x|\le r}\la \lambda x\ra^{-2m}|u^{(k)}(x)|^2\,dx+\lambda^{-2k}s^{-2m}\int_{|x|>r}\la x\ra^{-2m}|u^{(k)}(x)|^2\,dx.
\end{align*}

But by Lemma \ref{sobolev_bound} the first term is
\begin{align*}
    \lambda^{-2k}\int_{|x|\le r}\la \lambda x\ra^{-2m}&|u^{(k)}(x)|^2\,dx\le \lambda^{-2k}\int_{|x|\le r}|u^{(k)}(x)|^2\,dx\\
    &\le \lambda^{-2k}\left(2\sqrt{2}\frac{r^{K-k+\frac{1}{2}}}{(K-k)!}\left(\|u^{(K)}\|_{L^2([-1,1])}+\|u^{(K+1)}\|_{L^2([-1,1])}\right)\right)^2\\
    &\le 16\lambda^{-2k}\frac{r^{2K-2k+1}}{(K-k)!^2}\left(\|u^{(K)}\|_{L^2([-1,1])}^2+\|u^{(K+1)}\|_{L^2([-1,1])}^2\right).
\end{align*}

We now wish to optimize $r$ and $s$ to get the best possible bound. Set $r=s^{-1}$ and get $s=\lambda^{\frac{1}{4}}$, $r=\lambda^{-\frac{1}{4}}$. Summing everything together gives the bound (\ref{Lemma2Ineq}).
\end{proof}
We also need a bound for a smoothing operator in the $Y$-norm.

\begin{lem}\label{ChiYIneq}
Let $\chi\in\cinf_0(T^*\R)$, and define operator $\widetilde{\chi}$ by $\widetilde{\chi}=\chi(\sqrt{h}x,\sqrt{h}D)$. Then there are $C>0$, $f:\N\to\R^+$ such that 
\begin{equation}\label{ChiYLemIneq}
    \|\widetilde{\chi}u\|_Y\le(C+f(m)\sqrt{h})\|u\|_Y.
\end{equation}
\end{lem}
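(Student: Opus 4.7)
The plan is to view $\widetilde{\chi} = \chi(\sqrt{h}x, \sqrt{h}D)$ as an $h=1$ Weyl pseudodifferential operator with symbol $a_h(x,\xi) := \chi(\sqrt{h}x, \sqrt{h}\xi) \in S(1)$, where the crucial observation is that each partial derivative of $a_h$ carries a factor of $\sqrt{h}$: $|\partial_x^\alpha\partial_\xi^\beta a_h|\le C_{\alpha\beta} h^{(|\alpha|+|\beta|)/2}$. By the Calder\'on-Vaillancourt theorem this gives $\|\widetilde{\chi}\|_{L^2\to L^2}\le C$ uniformly in $h$, and similarly for each operator $\widetilde{\partial_1^l\chi}$.

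Since $\partial = \partial_x$ has the linear symbol $i\xi$, the Moyal expansion for $[\partial,\widetilde{\chi}]$ terminates at first order and gives the exact identity $[\partial,\widetilde{\chi}] = \sqrt{h}\,\widetilde{\partial_1\chi}$. Iterating,
\begin{equation*}
\partial^k\widetilde{\chi} = \sum_{l=0}^k \binom{k}{l} h^{l/2}\,\widetilde{\partial_1^l\chi}\,\partial^{k-l}.
\end{equation*}
Multiplying on the left by $\langle x\rangle^{-m}$ and commuting,
\begin{equation*}
\langle x\rangle^{-m}\widetilde{\partial_1^l\chi}\,\partial^{k-l}u = \widetilde{\partial_1^l\chi}\bigl(\langle x\rangle^{-m}\partial^{k-l}u\bigr) + [\langle x\rangle^{-m},\widetilde{\partial_1^l\chi}]\,\partial^{k-l}u.
\end{equation*}
The first summand has $L^2$-norm $\le C\|u\|_Y$ by $L^2$-boundedness of $\widetilde{\partial_1^l\chi}$. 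With the $h^{l/2}$ prefactors, the terms with $l\ge 1$ are absorbed into the $f(m)\sqrt{h}\|u\|_Y$ budget, while the $l=0$ first summand yields the $C\|u\|_Y$ piece of \eqref{ChiYLemIneq}.

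The main task is to bound the commutator $[\langle x\rangle^{-m},\widetilde{\partial_1^l\chi}]\partial^{k-l}u$ by $Cm\sqrt{h}\|u\|_Y$. The key trick is the substitution $\partial^{k-l}u = \langle y\rangle^m v$ with $v := \langle y\rangle^{-m}\partial^{k-l}u$ satisfying $\|v\|_{L^2}\le \|u\|_Y$; the commutator applied to $\partial^{k-l}u$ then becomes the integral operator applied to $v$ with kernel
\begin{equation*}
G(x,y)\,K_l(x,y) := \bigl(\langle x\rangle^{-m}\langle y\rangle^m-1\bigr)\,K_l(x,y),
\end{equation*}
where $K_l$ denotes the kernel of $\widetilde{\partial_1^l\chi}$. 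A direct computation identifies $K_l$ as $(2\pi\sqrt{h})^{-1}$ times a partial Fourier transform of $\partial_1^l\chi$, which is Schwartz in $(x-y)/\sqrt{h}$ and compactly supported in $\sqrt{h}(x+y)/2$. On the balanced region $\langle y\rangle\le 2\langle x\rangle$, the mean value theorem gives $|G(x,y)|\le Cm|x-y|$, so the extra factor $|x-y|$ against the Schwartz profile $(1+|x-y|/\sqrt{h})^{-N}$ produces exactly a $\sqrt{h}$ gain in a Schur test.

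The main obstacle is the complementary ``unbalanced'' region $\langle y\rangle > 2\langle x\rangle$, where $|G|$ may grow polynomially and the MVT estimate fails. Here one verifies $|x-y|\gtrsim \langle y\rangle \ge 2$, so the rapid Schwartz decay of $K_l$ in $|x-y|/\sqrt{h}$ forces that contribution to be $O(h^\infty)$. Combining the two cases, the integral operator with kernel $GK_l$ has $L^2\to L^2$ norm $\le Cm\sqrt{h}$; summing over $k, l$ and squaring assembles the $Y$-norm to yield \eqref{ChiYLemIneq} with $f(m)$ polynomial in $m$.
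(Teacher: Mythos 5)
Your proof is correct and follows the same overall skeleton as the paper's (commute derivatives past $\widetilde{\chi}$, then commute $\la x\ra^{-m}$ past the resulting pseudodifferential factors, keeping careful track of which piece produces the $m$-independent constant $C$), but the two technical ingredients are handled genuinely differently. For the derivative commutator, the paper computes $[D_x,\widetilde\chi]$ directly and asserts by ``easy induction'' that $[D_x^k,\widetilde\chi]=\tilde h\chi_k(\tilde hx,\tilde hD)$ with $\chi_k\in\cinf_0$ independent of $h$; this is actually not quite right for $k\ge 2$, since e.g. $[D^2,\widetilde\chi]=2\tilde h\,\chi'(\tilde hx,\tilde hD)D+\tilde h^2\chi''(\tilde hx,\tilde hD)$ retains an unscaled $D$-factor, so the claimed operator is not a single $\chi_k(\tilde hx,\tilde hD)$ with $h$-independent compactly supported $\chi_k$. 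Your exact binomial expansion
\begin{equation*}
\partial^k\widetilde{\chi}=\sum_{l=0}^k\binom{k}{l}h^{l/2}\,\widetilde{\partial_1^l\chi}\,\partial^{k-l}
\end{equation*}
is the correct (and more explicit) form, and it still feeds into the same estimate because the surviving $\partial^{k-l}$ factors can be absorbed into the $Y$-norm just as in the paper. For the commutator with the weight $\la x\ra^{-m}$, the paper works at the semiclassical scale $\tilde h=\sqrt h$ and reads off the $O(\sqrt h)$ gain from the Moyal expansion of $[\la x\ra^{-m},\Op_{\tilde h}\chi_s]$; you instead substitute $v=\la y\ra^{-m}\partial^{k-l}u$, rewrite the commutator as an integral operator with kernel $(\la x\ra^{-m}\la y\ra^m-1)K_l(x,y)$, and run a Schur test split into the regions $\la y\ra\le 2\la x\ra$ (mean value theorem against the Schwartz profile $\Leftrightarrow \sqrt h$ gain) and $\la y\ra>2\la x\ra$ ($|x-y|\gtrsim\la y\ra\ge 1$ forces $O(h^\infty)$). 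Both approaches deliver the needed $C+f(m)\sqrt h$ bound with $C$ independent of $m$; the paper's is shorter given the calculus already in place, while yours is more self-contained and avoids invoking the pseudodifferential calculus in a weighted symbol class. One small caveat: the MVT constant in the balanced region is of size $m2^{m-1}$ rather than polynomial in $m$, so your final remark that $f(m)$ is polynomial is not justified, but this is harmless since the lemma only needs some $f:\N\to\R^+$.
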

\begin{proof}
Let $\tilde h=\sqrt{h}$. We compute the commutator of $\widetilde{\chi}$ with a derivative:
\begin{align*}
    [D_x,\widetilde\chi]u(x)&=D_x\iint e^{i(x-y)\xi}\chi(\tilde h\frac{x+y}{2},\tilde h\xi)u(y)\,dy\,d\xi\\&\quad-\iint e^{i(x-y)\xi}\chi(\tilde h\frac{x+y}{2},\tilde h\xi)D_yu(y)\,dy\,d\xi\\
    &=\iint D_x\left(e^{i(x-y)\xi}\chi(\tilde h\frac{x+y}{2},\tilde h\xi)\right)u(y)\,dy\,d\xi\\&\quad+\iint D_y\left(e^{i(x-y)\xi}\chi(\tilde h\frac{x+y}{2},\tilde h\xi)\right)u(y)\,dy\,d\xi\\
    &=\tilde h\iint\left(e^{i(x-y)\xi}\chi'(\tilde h\frac{x+y}{2},\tilde h\xi)\right)u(y)\,dy\,d\xi.
\end{align*}
By an easy induction, we see that
\begin{equation}\label{CommWithDer}
    [D_x^k,\widetilde{\chi}]=\tilde h\chi_k(\tilde hx,\tilde hD)
\end{equation}
where $\chi_k$ is a compactly supported smooth function independent of $h$.

We then have:
\begin{align}
    \nonumber\|\widetilde\chi u\|_Y^2&=\sum_{k=0}^m\|\la x\ra^{-m} D^k\widetilde\chi u\|_{L^2(\R)}^2\\
    \nonumber&=\sum_{k=0}^m\|\la x\ra^{-m} (\widetilde\chi D^k+[D^k,\widetilde\chi]) u\|_{L^2(\R)}^2\\
    \nonumber&\le\sum_{k=0}^m\left(\|\la x\ra^{-m} \widetilde\chi D^k u\|_{L^2(\R)}+\|\la x\ra^{-m} [D^k,\widetilde\chi] u\|_{L^2(
    \R)}\right)^2\\
    \label{TwoTermIneq}&\le 2\sum_{k=0}^m\|\la x\ra^{-m} \widetilde\chi D^k u\|_{L^2(
    \R)}^2+2\tilde h^2\sum_{k=0}^m\|\la x\ra^{-m} \chi_k(\tilde hx,\tilde hD) u\|_{L^2(
    \R)}^2.
\end{align}

Define function $\chi_s(x,\xi)=\chi(\tilde h x,\xi)$, so $\widetilde \chi=\Op_{\tilde h}\chi_s$. Note that while $\chi_s$ is $\tilde h$-dependent, this dependence only contributes to further decay of its derivatives, so it is still in all the same symbol classes as $\chi$. Note also that $\la x\ra^{-m}\in S(\la x\ra^{-m})$, so we have (with $\cdot$ referring to the $x$ variable): 
\begin{align*}
    \la x\ra^{-m}\chi_s(x,\tilde{h}D)&\la x\ra^m=\chi_s(x,\tilde{h}D)+[\la x\ra^{-m},\chi_s(x,\tilde{h}D)]\la x\ra^m\\&=\chi_s(x,\tilde{h}D)+\left(\frac{\tilde{h}}{i}\{\la \cdot\ra^{-m},\chi_s\}(x,\tilde{h}D)+O_{S(\la x\ra^{-m})}(\tilde{h}^3)\right)\la x\ra^m\\
    &=\chi_s(x,\tilde{h}D)+\left(\frac{\tilde{h}}{i}(m\la \cdot\ra^{-m-2}(\cdot)\dd_{\xi}\chi_s)^w(x,\tilde{h}D)+O_{S(\la x\ra^{-m})}(\tilde{h}^3)\right)\la x\ra^m\\
    &=\chi_s(x,\tilde{h}D)+\left(O_{S(\la x\ra^{-m})}(\tilde{h})\right)\la x\ra^m.
\end{align*}

Then $\la x\ra^{-m}\chi_s(x,\tilde{h}D)\la x\ra^m$ is bounded by $C+O(\tilde h)$ from $L^2(\R)$ to $L^2(
\R)$ where the big-O depends on $m$ but the $C$ does not. This proves that:
\begin{equation}\label{CommIneq}
    \|\la x\ra^{-m} \widetilde\chi D^k u\|_{L^2(\R)}\le(C+O(\tilde h))\|\la x\ra^{-m} D^k u\|_{L^2(\R)}.
\end{equation}
Similarly we see that
\begin{equation*}
\la x\ra^{-m}\chi_{k,r}(x,\tilde{h}D)\la x\ra^m\le\chi_{k,r}(x,\tilde{h}D)+\left(O_{S(\la x\ra^{-m})}(\tilde{h})\right)\la x\ra^m
\end{equation*}
where $\chi_{k,r}(x,\xi)=\chi_k(\tilde h x, \xi)$, which gives
\begin{equation}\label{RemIneq}
    \|\la x\ra^{-m} \widetilde\chi_k(\tilde hx,\tilde h\xi) u\|_{L^2(\R)}\le(C+O(\tilde h))\|\la x\ra^{-m} u\|_{L^2(\R)}.
\end{equation}

Combining (\ref{TwoTermIneq}), (\ref{CommIneq}), and (\ref{RemIneq}) gives (\ref{ChiYLemIneq}).
\end{proof}

We put the pieces together to prove an estimate for $\chi(x,hD)\wh{F}$ in the $B$-norm.

\begin{prop}\label{MainEstimate}

Suppose that $\supp\chi\subseteq\{|x|^2+|\xi|^2< \Lambda/(2\lambda^2)\}$. Then there are $C>0, f:\N\to\R^+$ such that if $u\in Y\cap V_K$, then
\begin{equation}\label{MainEstEq}
    \|\chi(x,hD)\wh{F}u\|_B\le \left(C\lambda^{-\frac{K}{4}}+f(m)\sqrt{h}\right)\|u\|_{B}.
\end{equation}
\end{prop}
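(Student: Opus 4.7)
The strategy is to pass to the $U_h$-rescaled picture, where the $Y$-norm estimates of Lemmas \ref{YNormEstimate} and \ref{ChiYIneq} are designed to apply. A direct computation on integral kernels shows $U_h\chi(x,hD)U_h^{-1}=\widetilde\chi$, and $U_h\wh{F}=\wh{F}U_h$ since both are dilations. Setting $v:=\chi(x,hD)\wh{F}u$, we thus have the key identity $U_h v = \widetilde\chi\wh{F}U_h u$. Moreover $U_h P_h U_h^{-1}=hP$ for the standard harmonic oscillator $P:=D^2+x^2$, so $U_h\One_{P_h\le\Lambda}U_h^{-1}=\One_{P\le\Lambda/h}$. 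The plan is then to estimate the two summands of $\|v\|_B^2=\|U_h\One_{P_h\le\Lambda}v\|_Y^2+\|\One_{P_h>\Lambda}v\|_{L^2}^2$ separately.

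For the high-energy $L^2$ summand, the hypothesis $\supp\chi\subseteq\{x^2+\xi^2<\Lambda/(2\lambda^2)\}$ combined with Lemma \ref{RnFunCalc} applied to a cutoff $\tilde\psi\in C^\infty_0(\R)$ with $\tilde\psi=1$ on $[0,\Lambda/(2\lambda^2)+\epsilon]$ and $\supp\tilde\psi\subseteq(-\infty,\Lambda-\epsilon)$ yields $\tilde\psi(P_h)\chi(x,hD)=\chi(x,hD)+\Ohi_{L^2\to L^2}$. Since $\One_{P_h>\Lambda}\tilde\psi(P_h)=0$, this gives $\|\One_{P_h>\Lambda}v\|_{L^2}=\Ohi\|u\|_{L^2}$, which by Proposition \ref{NormEquivalence} is $\Ohi\|u\|_B$.

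For the low-energy $Y$ summand, the microlocal support of $\widetilde\chi$ in $\{x^2+\xi^2<\Lambda/(2\lambda^2 h)\}$ is strictly interior to $\{P\le\Lambda/h\}$, so a standard argument in the $\sqrt h$-semiclassical calculus lets one drop the outer projection $\One_{P\le\Lambda/h}$ modulo an $\Ohi\|u\|_B$ error. Since $u\in V_K$ implies $U_h u\in V_K$ (the rescaling yields $(U_h u)^{(k)}(0)=h^{k/2+1/4}u^{(k)}(0)$, preserving the vanishing at $0$), Lemmas \ref{ChiYIneq} and \ref{YNormEstimate} compose to give
\begin{equation*}
\|\widetilde\chi\wh{F}U_h u\|_Y \le (C+f(m)\sqrt h)\,C_0\lambda^{-K/4}\,\|U_h u\|_Y.
\end{equation*}

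The main obstacle is then bounding $\|U_h u\|_Y$ by $\|u\|_B$: the $B$-norm controls $U_h u$ in the $Y$-norm only on the low-energy subspace $\{P\le\Lambda/h\}$, while on the complement the control is only in $L^2$. To bypass this, I would decompose $U_h u=w_1+w_2$ with $w_1:=\One_{P\le\Lambda/h}U_h u$ and $w_2:=\One_{P>\Lambda/h}U_h u$, so that $\|w_1\|_Y\le\|u\|_B$ and $\|w_2\|_{L^2}\le\|u\|_B$, and estimate $\widetilde\chi\wh{F}w_1$ and $\widetilde\chi\wh{F}w_2$ separately. The $w_2$-piece is killed by microlocal quasi-orthogonality: by Egorov, $\widetilde\chi\wh{F}$ equals $\wh{F}$ composed with the Weyl quantization of $\chi(\sqrt h\lambda x,\sqrt h\xi/\lambda)$, whose symbol is supported where $\lambda^2 x^2+\xi^2/\lambda^2<\Lambda/(2\lambda^2 h)$, and hence where $P\le\Lambda/(2h)<\Lambda/h$, disjoint from the microlocal range of $w_2$; a rapid-decay argument then gives $\|\widetilde\chi\wh{F}w_2\|_Y=\Ohi\|u\|_B$. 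For the $w_1$-piece, writing $\widetilde\chi\wh{F}w_1=\widetilde\chi\wh{F}U_h u-\widetilde\chi\wh{F}w_2$ transfers the $V_K$-based estimate from $U_h u$ (which lies in $V_K$) to $w_1$ (which need not) at the cost of an additional $\Ohi\|u\|_B$ error, effectively replacing $\|U_h u\|_Y$ in the displayed bound with $\|w_1\|_Y\le\|u\|_B$. Combining these estimates and absorbing all $\Ohi$ contributions into the $f(m)\sqrt h$ slot of the stated inequality yields the proposition.
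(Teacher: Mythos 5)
Your proof shares the skeleton of the paper's argument: the same decomposition $\|v\|_B^2=\|U_h\One_{P_h\le\Lambda}v\|_Y^2+\|\One_{P_h\ge\Lambda}v\|_{L^2}^2$, the same treatment of the high-energy piece by microlocal disjointness, and the same use of Lemmas \ref{ChiYIneq} and \ref{YNormEstimate} after passing to the $U_h$-rescaled picture. Where you diverge is in the handling of the low-energy term: you drop the spectral projection outright so that Lemma \ref{YNormEstimate} can be applied to $U_h u\in V_K$, and then try to recover control via the split $U_h u=w_1+w_2$. This last step does not close. You correctly bound $\|\widetilde\chi\wh{F}w_2\|_Y=\Ohi\|u\|_B$, but then claim that writing $\widetilde\chi\wh{F}w_1=\widetilde\chi\wh{F}U_hu-\widetilde\chi\wh{F}w_2$ lets you "effectively replace $\|U_hu\|_Y$ with $\|w_1\|_Y$." This is circular: the chain of estimates still terminates with $\|U_hu\|_Y$ on the right-hand side, and $\|U_hu\|_Y$ is genuinely \emph{not} controlled by $\|w_1\|_Y+\Ohi\|u\|_B$, since $\|w_2\|_Y$ is not bounded by $\|w_2\|_{L^2}$ (the high-energy projection $\One_{P>\Lambda/h}$ places no a priori control on the derivatives entering the $Y$-norm). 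So the substitution you need is never justified, and the final inequality is not established.

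The paper avoids this entirely by a different insertion. Rather than peeling off the projection from $U_hu$, it inserts the Egorov-conjugated cutoff $\One_{\frac{x^2}{\lambda^2}+\lambda^2(hD)^2\le\Lambda}$ between $\chi(x,hD)$ and $\wh{F}$; this is harmless up to $\Ohi$ because $\supp\chi\subseteq\{x^2+\xi^2<\Lambda/(2\lambda^2)\}$ forces microlocal disjointness with the complementary region. Commuting this cutoff through $\wh{F}$ via the exact Egorov theorem converts it into $\One_{P_h\le\Lambda}$ sitting directly next to $u$, so after applying Lemma \ref{ChiYIneq} and Lemma \ref{YNormEstimate} one lands on $\|U_h\One_{P_h\le\Lambda}u\|_Y$, which is precisely a constituent of $\|u\|_B$. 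The paper therefore never needs to estimate $\|U_hu\|_Y$ on its own — the projection stays attached to $u$ throughout — which is exactly the quantity your argument cannot control. To repair your proof you would need to replicate this maneuver: insert a spectral cutoff on the $\wh{F}u$ side and commute it across, rather than decomposing $U_hu$ after the fact.
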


We remark that the independence of $C$ from $m$ in Proposition \ref{MainEstimate} (and hence also in Proposition \ref{ChiYIneq}) is crucial, as we will eventually take $K$ very large to make the left hand side of (\ref{MainEstEq}) small, but require that $m
\ge K$ for $V_K$ to even be defined.
\begin{proof}
Recall that
\begin{equation*}
    \|\chi(x,hD)\wh{F}u\|_B^2=\|U_h\One_{P_h\le\Lambda}\chi(x,hD)\wh{F}u\|_Y^2+\|\One_{P_h\ge\Lambda}\chi(x,hD)\wh{F}u\|_{L^2(\R)}^2.
\end{equation*}

By Theorem 6.6 in \cite{Zworski} (or an argument similar to that of Proposition \ref{NormEquivalence}),
\begin{equation*}
    \|\One_{P_h\ge\Lambda}\chi(x,hD)\wh{F}u\|_{L^2(\R)}\le \Ohi\|u\|_{L^2}
\end{equation*}
so by Proposition \ref{NormEquivalence} we must only bound $\|U_h\One_{P_h\le\Lambda}\chi(x,hD)\wh{F}u\|_Y$.

But
\begin{equation}\label{Lemma4Reduced}
    \left\|U_h\One_{P_h\le\Lambda}\chi(x,hD)\wh{F}u\right\|_Y\le\left\|U_h\One_{P_h\ge\Lambda}\chi(x,hD)\wh{F}u\right\|_Y+\left\|U_h\chi(x,hD)\wh{F}u\right\|_Y.
\end{equation}

We bound the first term in (\ref{Lemma4Reduced}) first. We have:
\begin{align*}
    \left\|U_h\One_{P_h\ge\Lambda}\chi(x,hD)\wh{F}u\right\|_Y^2&=\sum_{k=0}^m\left\|\la x\ra^{-m}\dd^k(U_h\One_{P_h\ge\Lambda}\chi(x,hD)\wh{F}u)\right\|_{L^2(\R)}^2\\
    &\le\sum_{k=0}^m\left\|\dd^k(U_h\One_{P_h\ge\Lambda}\chi(x,hD)\wh{F}u)\right\|_{L^2(\R)}^2\\
    &=\sum_{k=0}^mh^{-k}\left\|(hD)^k(\One_{P_h\ge\Lambda}\chi(x,hD)\wh{F}u)\right\|_{L^2(\R)}^2
\end{align*}

Let $\phi\in\cinf_0(\{|x|<\Lambda\})$ have $\phi=1$ on $\supp\chi$, and let $\tilde\phi\in\cinf_0(\{|x|<2\Lambda\})$ have $\tilde\phi=1$ on $\{|x|<\Lambda\}$.

Then $\One_{P_h\ge\Lambda}=\One_{P_h\ge\Lambda}(I-\phi(P_h))$, so
\begin{align*}
    &\left\|(hD)^k\One_{P_h\ge\Lambda}\chi(x,hD)\wh{F}u\right\|_{L^2(\R)}=\left\|(hD)^k\One_{P_h\ge\Lambda}(I-\phi(P_h))\chi(x,hD)\wh{F}u\right\|_{L^2(\R)}\\
    &\le\left\|(hD)^k((I-\phi(P_h))\chi(x,hD)\wh{F}u\right\|_{L^2(\R)}+\left\|(hD)^k\One_{P_h\le\Lambda}(I-\phi(P_h))\chi(x,hD)\wh{F}u\right\|_{L^2(\R)}\\
    &=\left\|(hD)^k(I-\phi(P_h))\chi(x,hD)\wh{F}u\right\|_{L^2(\R)}+\left\|(hD)^k\tilde{\phi}(P_h)(I-\phi(P_h))\chi(x,hD)\wh{F}u\right\|_{L^2(\R)}.
\end{align*}

But by pseudodifferential calculus, $\|(hD)^k(I-\phi(P_h))\chi(x,hD)\|_{L^2\to L^2}$ and $\|(hD)^k\tilde{\phi}(P_h)\|_{L^2\to L^2}$ are both $\Ohi$, and all the other operators are uniformly bounded, so we have for each $k$,
\begin{equation*}
    \left\|(hD)^k\One_{P_h\ge\Lambda}\chi(x,hD)\wh{F}u\right\|_{L^2(\R)}=\Ohi\|u\|_{L^2}
\end{equation*} and hence by Lemma \ref{NormEquivalence}:
\begin{equation*}\label{Lemma4OhiBound}
    \left\|U_h\One_{P_h\ge\Lambda}\chi(x,hD)\wh{F}u\right\|_Y=\Ohi\|u\|_B
\end{equation*}

To handle the second term in (\ref{Lemma4Reduced}), we commute terms and apply Lemmas \ref{ChiYIneq} and \ref{YNormEstimate}:
\begin{align*}
    \left\|U_h\chi(x,hD)\wh{F}u\right\|_Y&=\left\|U_h\chi(x,hD)\One_{\frac{x^2}{\lambda^2}+\lambda^2(hD)^2\le  \Lambda}\wh{F}u\right\|_Y+\Ohi\|u\|_{L^2(\R)}\\
    &=\left\|\widetilde\chi U_h\One_{\frac{x^2}{\lambda^2}+\lambda^2(hD)^2\le  \Lambda}\wh{F}u\right\|_Y+\Ohi\|u\|_{L^2(\R)}\\
    &\le (C+f(m)\sqrt{h})\|U_h\One_{\frac{x^2}{\lambda^2}+\lambda^2(hD)^2\le \Lambda}\wh F u\|_Y+\Ohi\|u\|_{L^2(\R)}\\
    &=(C+f(m)\sqrt{h})\|\wh FU_h\One_{P_h\le\Lambda} u\|_Y+\Ohi\|u\|_{L^2(\R)}\\
    &\le(C\lambda^{-\frac{K}{4}}+f(m)\sqrt{h})\|U_h\One_{P_h\le\Lambda} u\|_Y+\Ohi\|u\|_{L^2(\R)}.
\end{align*}
This gives (\ref{MainEstEq}).
\end{proof}

Finally, we will need pointwise control of $\widetilde{\chi}u$ near $x=0$. We assume from now on that $m\ge K+2$.

\begin{lem}\label{PointwiseLem}
There is an $f:\N\times\R^+\to\R^+$ such that for $k<m-1$, 
\begin{equation}\label{PointwiseLemEq}
    \left|\dd_x^k\widetilde{\chi}u(0)-\dd_x^ku(0)\right|\le f(m,\epsilon)h^{\frac{m-k-1}{2}-\epsilon}\|u\|_Y
\end{equation}
where $\widetilde{\chi}=\chi(\sqrt{h}x,\sqrt{h}D)$ as in Lemma \ref{ChiYIneq}.
\end{lem}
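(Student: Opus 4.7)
The plan is to localize $u$ near $0$ and exploit that the Weyl symbol $\chi(\sqrt h x,\sqrt h\xi)-1$ of $\widetilde\chi-I$ vanishes whenever $(\sqrt h x,\sqrt h\xi)$ lies in the open set where $\chi\equiv 1$.

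First I would choose $\phi\in C_c^\infty(\R)$ with $\phi=1$ on $|x|\le 1/2$ and $\phi=0$ on $|x|\ge 1$, and split $u=w+g$ with $w=\phi u$ and $g=(1-\phi)u$. Since $\langle x\rangle^{-m}$ is bounded below on $|x|\le 1$, one has $\|w\|_{H^m}\lesssim\|u\|_Y$, and $g$ vanishes identically near $0$. For the far piece, the Weyl kernel of $\widetilde\chi$ equals $K(x,y)=h^{-1/2}\widehat{\chi}_2(\sqrt h(x+y)/2,(y-x)/\sqrt h)$, where $\widehat{\chi}_2(z,\cdot)$ is the partial Fourier transform of $\chi$ in the second slot and is Schwartz in $s$ uniformly in $z$. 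Differentiating in $x$ at $x=0$ gives $|\dd_x^jK(0,y)|\le C_Nh^{(N-j-1)/2}\langle y/\sqrt h\rangle^{-N}$; Cauchy–Schwarz with $\|\langle y\rangle^{-m}u\|_{L^2}\le\|u\|_Y$ on $|y|\ge 1/2$ and $N$ taken arbitrarily large yields $|\dd_x^k[(\widetilde\chi-I)g](0)|=\Ohi\|u\|_Y$.

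For the near piece I would expand $\dd_x^k[\widetilde\chi w](0)$ via the Weyl kernel integral and Leibniz on $\dd_x^k[e^{i(x-y)\xi}\chi(\sqrt h(x+y)/2,\sqrt h\xi)]|_{x=0}$. The $l=0$ term minus $w^{(k)}(0)=(2\pi)^{-1}\int(i\xi)^k\widehat{w}(\xi)\,d\xi$ combines into
\begin{equation*}
A_0=\frac{1}{2\pi}\int(i\xi)^k\bigl[F(\xi)-\widehat{w}(\xi)\bigr]\,d\xi,\qquad F(\xi)=\int e^{-iy\xi}\chi(\sqrt h y/2,\sqrt h\xi)w(y)\,dy,
\end{equation*}
while the remaining terms $A_l$ for $l\ge 1$ carry a factor $(\sqrt h/2)^l\dd_1^l\chi$ in place of $\chi-1$. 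The key observation is that if $\chi=1$ on $|z|\le r_0$, then on $\supp w\subseteq\{|y|\le 1\}$ with $|\xi|\le r_0/\sqrt h$ and $h$ small, one has $\chi(\sqrt h y/2,\sqrt h\xi)=1$ and $\dd_1^l\chi(\sqrt h y/2,\sqrt h\xi)=0$ for $l\ge 1$; each integrand therefore vanishes there, effectively restricting the $\xi$-integration to $|\xi|\ge r_0/\sqrt h$. On this high-frequency region I would invoke the pointwise bounds $|\widehat w(\xi)|,|F(\xi)|\le C\|w\|_{H^m}|\xi|^{-m}$, which follow from $|(i\xi)^m\widehat w|=|\widehat{w^{(m)}}|\le\|w^{(m)}\|_{L^1}\lesssim\|w^{(m)}\|_{L^2}$ using the compact support of $w$, and analogously for $F$ by $m$ integrations by parts in $y$ (with boundary terms killed by the support of $w$ and derivatives of the $\chi$-factor carrying extra $\sqrt h$'s). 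This reduces matters to $\int_{|\xi|\ge r_0/\sqrt h}|\xi|^{k-m}\,d\xi\lesssim h^{(m-k-1)/2}$, an integral that converges precisely because $k<m-1$; together with the extra $(\sqrt h)^l$ factors, one obtains $|\dd_x^k[(\widetilde\chi-I)w](0)|\lesssim h^{(m-k-1)/2}\|u\|_Y$, which is even stronger than \eqref{PointwiseLemEq} and absorbs the $\epsilon$-loss.

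The main technical step is obtaining the matching $|\xi|^{-m}$ pointwise decay of $F(\xi)$: because its amplitude $\chi(\sqrt h y/2,\sqrt h\xi)$ depends on $\xi$, $F$ is not literally a Fourier transform, and one must carefully track that each $y$-derivative of the amplitude introduces only a harmless $\sqrt h$ factor so that the repeated integration by parts estimate survives uniformly in $\xi$.
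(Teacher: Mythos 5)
Your proposal is correct and takes a genuinely different route from the paper. The paper proves the $k=0$ case by writing $\widetilde\chi u(0)$ as a convolution $\tilde h^{-1}\int\psi(\tilde h^{-1}x)u(x)\,dx$ on the physical side, Taylor-expanding $u$ about $0$ to order $p<m$, and splitting the remainder integral into three regions $I_1, I_2, I_3$ with a free parameter $r\approx\tilde h^{1-\epsilon}$; higher $k$ are then handled by the commutator identity $[D_x^k,\widetilde\chi]=\tilde h\chi_k(\tilde hx,\tilde hD)$. You instead split $u=\phi u+(1-\phi)u$ into near/far pieces, discard the far piece by Schwartz kernel decay, and for the near piece pass to the Fourier side: after applying Leibniz to $\partial_x^k$ of the Weyl integrand, the fact that $\chi\equiv 1$ near the origin makes the entire integrand of $\partial_x^k[\widetilde\chi w](0)-w^{(k)}(0)$ vanish for $|\xi|\lesssim r_0/\sqrt h$ (on $\supp w\subseteq\{|y|\le 1\}$), so only frequencies $|\xi|\gtrsim h^{-1/2}$ contribute, and the pointwise $|\xi|^{-m}$ decay of $\widehat w$ and of the $\xi$-dependent amplitude integrals — proved by $m$ integrations by parts in $y$, which is where the hypothesis $k<m-1$ enters to make $\int|\xi|^{k-m}$ converge — yields the bound directly.

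The comparison: the paper's Taylor-plus-split argument keeps everything in physical space, needs the explicit commutator lemma to lift $k=0$ to general $k$, and the free parameter $r$ produces the $h^{-\epsilon}$ loss visible in the statement. Your Fourier-side approach handles all $k$ uniformly with a single Leibniz expansion, makes the role of $\chi\equiv 1$ near the origin transparent (it restricts to high frequency), and as you note delivers the sharper rate $h^{(m-k-1)/2}$ with no $\epsilon$ at all. One presentational remark: the intermediate bound $|\partial_x^jK(0,y)|\le C_Nh^{(N-j-1)/2}\langle y/\sqrt h\rangle^{-N}$ is written a bit loosely — the natural kernel bound is $h^{-(j+1)/2}\langle y/\sqrt h\rangle^{-N'}$ with $N'$ arbitrary, and you should state that you then trade part of the Schwartz decay for powers of $h$ on the region $|y|\ge 1/2$ before Cauchy–Schwarz; the conclusion is unchanged, but as written the bound is not a bound on all of $\R$.
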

\begin{proof}
We recall the notation form Lemma \ref{ChiYIneq} and write $\tilde h:=\sqrt{h}$ and $\chi_s(x,\xi):=\chi(\tilde h x,\xi)$ so $\widetilde{\chi}=\chi_s(x,\tilde hD)$. 

We first prove the result for $k=0$. Write $\chi_s(x,\tilde hD)=\Op^L_{\tilde h}(\chi_L)+O(\tilde h^{\infty})_{L^2(\R)\to L^2(\R)}$, where $\Op^L$ is the left quantization, and let $\psi(x)=\check\chi_L(0,x)$, where the inverse Fourier transform is in the second variable. We then compute:
 \begin{equation*}
     \widetilde{\chi}u(0)=\int \chi_L(0,\tilde h\xi)\hat{u}(\xi)\,d\xi+O(\tilde h^{\infty})\|u\|_Y.
 \end{equation*}
By Parseval's Identity and Taylor's theorem:
\begin{align}
     \nonumber\int &\chi_L(0,\tilde h\xi)\hat{u}(\xi)\,d\xi
     \nonumber=\tilde h^{-1}\int \psi(\tilde h^{-1}x)u(x)\,dx\\
     \nonumber&=\tilde h^{-1}\int \psi(\tilde h^{-1}x)\left(u(0)+\sum_{j=1}^{p-1}\frac{x^j}{j!}u^{(j)}(0)+\frac{x^p}{(p-1)!}\int_0^1(1-t)^{p-1}u^{(p)}(tx)\,dt\right)\,dx\\
     \label{PointwiseTaylor}&=u(0)+\frac{\tilde h^{-1}}{(p-1)!}\int\psi(\tilde h^{-1}x)x^p\int_0^1(1-t)^{p-1}u^{(p)}(tx)\,dt\,dx
\end{align}
where $p<m$.

We split the integral error term in (\ref{PointwiseTaylor}) as follows. Let:
\begin{equation*}
    I_1:=\tilde h^{-1}\int_{|x|< r}\psi(\tilde h^{-1}x)x^p\int_0^1(1-t)^{p-1}u^{(p)}(tx)\,dt\,dx
\end{equation*}
\begin{equation*}
    I_2:=\tilde h^{-1}\int_{|x|\ge r}\psi(\tilde h^{-1}x)x^p\int_0^{\frac{r}{x}}(1-t)^{p-1}u^{(p)}(tx)\,dt\,dx
\end{equation*}
\begin{equation*}
    I_3:=\tilde h^{-1}\int_{|x|\ge r}\psi(\tilde h^{-1}x)x^p\int_{\frac{r}{x}}^{1}(1-t)^{p-1}u^{(p)}(tx)\,dt\,dx
\end{equation*}
where $0<r<1$ is to be chosen later, so by (\ref{PointwiseTaylor}),
\begin{equation}\label{IntSplit}
    \widetilde{\chi}u(0)-u(0)=\frac{1}{(p-1)!}(I_1+I_2+I_3)+\Ohi\|u\|_Y
\end{equation}

We must bound each $I_j$ in turn. Note that as the result is the same for $\psi(-\cdot), u(-\cdot)$, we may only bound the integrals where $x\ge0$. We then have:
\begin{align}\label{I1}
\begin{split}
    |I_1|&\le\tilde h^{-1}\int_{|x|< r}|\psi(\tilde h^{-1}x)||x|^p\int_0^1|(1-t)^{p-1}u^{(p)}(tx)|\,dt\,dx\\
    &\le r^p\tilde h^{-1}\int_{|x|< r}|\psi(\tilde h^{-1}x)|\,dx\|u^{(p)}\|_{L^{\infty}([-r,r])}\\
    &\le Cr^p\left(\|u^{(p)}\|_{L^2([-1,1])}+\|u^{(p+1)}\|_{L^2([-1,1])}\right)\\
    &\le C(m)r^p\left(\|\la x\ra ^{-m}u^{(p)}\|_{L^2(\R)}+\|\la x\ra ^{-m}u^{(p+1)}\|_{L^2(\R)}\right).
    \end{split}
\end{align}

Also
\begin{align}
\begin{split}\label{I2}
    |I_2|&\le\tilde h^{-1}\int_{|x|\ge r}|\psi(\tilde h^{-1}x)|\int_0^r|x-t|^{p-1}|u^{(p)}(t)|\,dt\,dx\\
    &\le C(p)\tilde h^{-1}\int_{|x|\ge r}|\psi(\tilde h^{-1}x)|\la x\ra^p\,dx\|u^{(p)}\|_{L^{\infty}([-r,r])}\\
    &\le C(p)\tilde h^{-1}\int_{|x|\ge r}|\psi(\tilde h^{-1}x)|\la\tilde h^{-1} x\ra^p\,dx\left(\|\la x\ra ^{-m}u^{(p)}\|_{L^2(\R)}+\|\la x\ra ^{-m}u^{(p+1)}\|_{L^2(\R)}\right)\\
    &=C(p)\int_{|y|\ge \tilde h^{-1}r}|\psi(y)|\la y\ra\,dy\left(\|\la x\ra ^{-m}u^{(p)}\|_{L^2(\R)}+\|\la x\ra ^{-m}u^{(p+1)}\|_{L^2(\R)}\right)\\
    &\le C(m)O(\tilde h^{\infty})\left(\|\la x\ra ^{-m}u^{(p)}\|_{L^2(\R)}+\|\la x\ra ^{-m}u^{(p+1)}\|_{L^2(\R)}\right)
    \end{split}
\end{align}
provided $r>\tilde h^{1-\epsilon}$.

Finally we get 
\begin{align*}
    |I_3|&\le\tilde h^{-1}\int_{|x|\ge r}|\psi(\tilde h^{-1}x)|\int_r^x|x-t|^{p-1}|u^{(p)}(t)|\,dt\,dx\\
    &=\int_{t\ge r}\tilde h^{-1}\int_t^{\infty}|\psi(\tilde h^{-1}x)||x-t|^{p-1}\,dx|u^{(p)}(t)|\,dt.
\end{align*}
But
\begin{align*}
    \tilde h^{-1}\int_t^{\infty}|\psi(\tilde h^{-1}x)||x-t|^{p-1}\,dx&\le \tilde h^{-1}\int_t^{\infty}|\psi(\tilde h^{-1}x)||\tilde h^{-1}(x-t)|^{p-1}\,dx\\
    &=\int_{\tilde h^{-1}t}^{\infty}|\psi(y)||y-\tilde h^{-1}t|^{p-1}\,dy\\
    &=\Psi(\tilde h^{-1} t)
\end{align*}
for $\Psi\in\Sc(\R)$,
so
\begin{align}
\begin{split}\label{I3}
    |I_3|&\le\int_{t\ge r}\Psi(\tilde h^{-1}t)|u^{(p)}(t)|\,dt\\
    &\le\int_{t\ge r}\Psi(\tilde h^{-1}t)\la \tilde h^{-1}t\ra^m\la t\ra^{-m}|u^{(p)}(t)|\,dt\\
    &\le\left(\int_{t>r}\left|\Psi(\tilde h^{-1}t)\right|^2\la \tilde h^{-1}t\ra^{2m}\right)^{\frac{1}{2}}\left(\int_{t>r}\la t\ra^{-2m}\left|u^{(p)}(t)\right|^2\right)^{\frac{1}{2}}\\
    &=C(m)O(\tilde h^{\infty})\|\la x\ra ^{-m}u^{(p)}\|_{L^2(\R)}.
    \end{split}
\end{align}

The bounds (\ref{I1}), (\ref{I2}), (\ref{I3}) together with (\ref{IntSplit}) give (\ref{PointwiseLemEq}) for the case $k=0$, by choosing $p$ as large as possible and $r=\tilde{h}^{1-\epsilon}$.

For larger powers, we use the fact from the proof of Lemma \ref{ChiYIneq}, specifically (\ref{CommWithDer}), which says that
$[D_x^k,\widetilde{\chi}]=\tilde h\chi_k(\tilde hx,\tilde hD)$ where $\chi_k(x,\xi)=0$ near $x,\xi=0$. Then
\begin{equation}\label{DerivComm}
    \left|\dd_x^k\widetilde{\chi}u(0)-\dd_x^ku(0)\right|\le\left|\widetilde{\chi}\dd_x^ku(0)-\dd_x^ku(0)\right|+|\tilde h\chi_k(\tilde hx,\tilde hD)u(0)|,
\end{equation}
and we may simply apply the result for $k=0$ on the first term. On the second term, we write $\chi_k=\chi_k+\phi-\phi$ where $\phi=1$ near $0$ and $\supp\phi\subseteq\chi_k^{-1}(0)$. Then by (\ref{DerivComm}) we get
\begin{align*}
    |\tilde h\chi_k(\tilde hx,\tilde hD)&u(0)|=\tilde h|\chi_k(\tilde hx,\tilde hD)u(0)+\phi(\tilde hx,\tilde hD)u(0)-\phi(\tilde hx,\tilde hD)u(0)|\\
    &\le\tilde h\left|\left(\chi_k(\tilde hx,\tilde hD)+\phi(\tilde hx,\tilde hD)\right)u(0)-u(0)\right|+\left|\phi(\tilde hx,\tilde hD)u(0)-u(0)\right|
\end{align*}
on which we can again apply the result for $k=0$. This proves (\ref{PointwiseLemEq}) in general.
\end{proof}

\subsection{Grushin Problems}
\label{ss:gru}

To determine the spectrum of $\Op_h\chi\wh F$, we use the technique of building a Grushin problem, which relies on the well-known Schur complement formula. We briefly review these techniques here, and refer to Sj\"ostrand and Zworski \cite{SZ} for a more extensive exposition. We also remark that the particular Grushin problem in this section is similar to those in \cite{Gerard}.

The \emph{Schur complement formula} in elementary linear algebra says that for block matrices \begin{equation}\label{SchurMatrices}\mathcal{P}:=\begin{pmatrix}P&R_-\\R_+&0\end{pmatrix},\quad \mathcal{P}^{-1}:=\mathcal{E}:=\begin{pmatrix}E&E_+\\E_-&E_{-+}\end{pmatrix},
\end{equation}
$P$ is invertible if and only if $E_{-+}$ is, and in fact we have the formulas $$P^{-1}=E-E_+E_{-+}^{-1}E_-,\quad E_{-+}^{-1}=-R_+P^{-1}R_-.$$ More generally, the above formula applies even if the operators act on infinite-dimensional spaces. Specifically, let $H_1, H_2, H_+, H_-$ be Hilbert spaces, and let $P:H_1\to H_2$ be an operator. Let $R_-:H_-\to H$, $R_+:H\to H_+$, and define $\mathcal{P}:H_1\oplus H_-\to H_2\oplus H_+$ by (\ref{SchurMatrices}). If $\mathcal{P}$ is invertible, we say that the Grushin problem is \emph{well-posed}. In practice, $H_{\pm}$ are often finite-dimensional or otherwise simpler spaces, and we refer to $E_{-+}$ as the \emph{effective Hamiltonian} of $P$. We remark in particular that if $P$ is a Fredholm operator, it is always possible to construct a Grushin problem with finite-dimensional $H_{\pm}$, which matches the intuition of Fredholm operators being ``effectively finite-dimensional."

The starting point for our computation are the following simple Taylor coefficient operators:
\begin{equation*}
    R_-:\C^{K+1}\to \cinf(\R),\quad R_+:\cinf(\R)\to \C^{K+1}
\end{equation*}
\begin{equation*}
    R_-(a_0,\dots, a_K):=\sum_{j=0}^Ka_j\frac{x^j}{j!},\quad R_+u:=\left(u(0),u'(0),\dots, u^{(K)}(0)\right)
\end{equation*}
Note that $R_+R_-=I_{\C^{K+1}}$ and $R_-R_+$ returns the $K$th order Taylor polynomial at $0$. In particular, this means that $\Ran(I-R_-R_+)\subseteq V_{K+1}$ as defined in (\ref{VKDef}).

We now define the ``Gaussian" versions of $R_{\pm}$ as follows:
\begin{equation*}
    \hat R_-(a_0,\dots, a_K):=e^{-\frac{x^2}{2}}R_-(a_0,\dots, a_K),\quad \hat R_+u:=R_+\left(e^{\frac{x^2}{2}}u\right).
\end{equation*}
The property \begin{equation}\label{RmRp}\hat R_+\hat R_-=I_{\C^{K+1}}\end{equation} still holds, but $\hat{R}_-\hat{R}_+$ is more complicated. We nevertheless still have that $\Ran(I-\hat R_-\hat R_+)\subseteq V_{K+1}$, as for any $u\in\cinf$,
\begin{equation}\label{RpRm}
    (I-\hat{R}_-\hat{R}_+)u=u-e^{-\frac{x^2}{2}}R_-R_+(e^{\frac{x^2}{2}}u)=e^{-\frac{x^2}{2}}(I-R_-R_+)(e^{\frac{x^2}{2}}u)\in V_{K+1}.
\end{equation}
We also note the relation $\hat R_+=B_K R_+$, where $B_K$ is an explicitly computable lower-triangular matrix with all $1$'s on the diagonal.

We are now ready to define the ``localized" $R_{\pm}$ operators. Define
\begin{equation*}
    \tilde R_-:\C^{K+1}\to L^2(\R),\quad \tilde R_+:L^2(\R)\to \C^{K+1}
\end{equation*}
given by
\begin{equation*}
    \tilde{R}_-:=\mathbbm{1}_{P_h\le\Lambda}U_h^*\hat{R}_-,\quad
    \tilde{R}_+:=\hat{R}_+U_h\mathbbm{1}_{P_h\le\Lambda},
\end{equation*} where we note that for $h\le\frac{\Lambda}{2(K+1)}$ the $\mathbbm{1}_{P_h\le\Lambda}$ in the definition of $\tilde{R}_-$ is actually superfluous.
We see that by the Sobolev inequality
\begin{align*}
    (R_+U_h\One_{P_h\le\Lambda}u)_k&=\dd^kU_h\One_{P_h\le\Lambda}u(x)|_{x=0}\\&\ls\left(\|\dd^kU_h\One_{P_h\le\Lambda}u(x)\|_{L^2([-1,1])}+\|\dd^{k+1}U_h\One_{P_h\le\Lambda}u(x)\|_{L^2([-1,1])}\right)\\
    &\ls\|u\|_B
\end{align*}
with constants independent of $h$, so
$\tilde R_+:B\to \C^{K+1}$
is bounded independent of $h$. Similarly
\begin{equation*}
    \|\mathbbm{1}_{P_h\le\Lambda}U_h^*\hat{R}_-v\|_B=\|U_h\mathbbm{1}_{P_h\le\Lambda}U_h^*\hat{R}_-v\|_Y=\|\mathbbm{1}_{P\le \frac{\Lambda}{h}}\hat{R}_-v\|_Y=\|\hat{R}_-v\|_Y
\end{equation*}
for $h$ small enough, which gives in that case that 
$\tilde R_-:\C^{K+1}\to B$
is also bounded independent of $h$.

The analogues of $(\ref{RmRp})$ and $(\ref{RpRm})$ for $\tilde R_{\pm}$ are given in the following proposition.
\begin{prop}\label{LocalGrushins}
If $h\le\frac{\Lambda}{2(K+1)}$, then:
\begin{equation*}
    \tilde R_+\tilde R_-=I_{\C^{K+1}}
\end{equation*}
and $\One_{P_h\le\Lambda}-\tilde R_-\tilde R_+$ has range $V_{K+1}$.
\end{prop}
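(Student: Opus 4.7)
My plan is to reduce both statements to the corresponding identities already proven for the ``Gaussian'' operators $\hat R_\pm$—namely $\hat R_+\hat R_-=I$ from (\ref{RmRp}) and $\Ran(I-\hat R_-\hat R_+)\subseteq V_{K+1}$ from (\ref{RpRm})—by carefully peeling off the spectral cutoffs $\mathbbm{1}_{P_h\le\Lambda}$ that distinguish $\tilde R_\pm$ from their hatted cousins.

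The key preliminary step is spectral. Since $\Ran\hat R_-=\spn\{x^je^{-x^2/2}:0\le j\le K\}$ coincides with $\spn\{h_0,\dots,h_K\}$, the span of the first $K+1$ Hermite functions, the range of $U_h^*\hat R_-$ is spanned by the rescaled Hermite functions $U_h^*h_j$, which are eigenfunctions of $P_h=-h^2\partial^2+x^2$ with eigenvalues $(2j+1)h$ for $j\le K$. The hypothesis $h\le\Lambda/(2(K+1))$ gives $(2K+1)h<\Lambda$, so $\mathbbm{1}_{P_h\le\Lambda}$ acts as the identity on $\Ran(U_h^*\hat R_-)$; in particular $\mathbbm{1}_{P_h\le\Lambda}U_h^*\hat R_-=U_h^*\hat R_-$.

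Given this, the first identity is immediate by composition: $\tilde R_+\tilde R_-=\hat R_+U_h\mathbbm{1}_{P_h\le\Lambda}^2U_h^*\hat R_-=\hat R_+U_h U_h^*\hat R_-=\hat R_+\hat R_-=I_{\C^{K+1}}$, using $\mathbbm{1}^2=\mathbbm{1}$, the spectral step, and $U_h U_h^*=I$, followed by (\ref{RmRp}). For the second claim, I expand $(\mathbbm{1}_{P_h\le\Lambda}-\tilde R_-\tilde R_+)u$ for arbitrary $u$. The composite $U_h^*\hat R_-\hat R_+U_h$ already takes values in $\Ran(U_h^*\hat R_-)\subseteq\Ran\mathbbm{1}_{P_h\le\Lambda}$, so the outer cutoff in the $\tilde R_-\tilde R_+$ term is redundant, and the expression factors as $U_h^*(I-\hat R_-\hat R_+)(U_h\mathbbm{1}_{P_h\le\Lambda}u)$. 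By (\ref{RpRm}) the inner factor lies in $V_{K+1}$, and since $U_h^*:v\mapsto h^{-1/4}v(h^{-1/2}\cdot)$ is a pure dilation, each derivative transforms by a nonzero scalar, so it preserves the condition of vanishing to order $K$ at $0$; hence the output is in $V_{K+1}$.

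The only genuine obstacle is the spectral reduction in the second paragraph: one must correctly identify the eigenvalues of $P_h$ on the Hermite span and confirm the stated threshold on $h$ is tight enough to make $\mathbbm{1}_{P_h\le\Lambda}$ invisible on $\Ran(U_h^*\hat R_-)$. Once that is in place, the proposition reduces to straightforward bookkeeping on top of the $\hat R_\pm$ identities already established.
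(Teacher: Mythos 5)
Your proof is correct and takes essentially the same approach as the paper: both reduce to the $\hat R_\pm$ identities by observing that the spectral cutoff is invisible on the Hermite span. You are slightly more explicit than the paper in two places that it glosses over — you identify the eigenvalues $(2j+1)h$ of $P_h$ and show the hypothesis $h\le\Lambda/(2(K+1))$ forces $(2K+1)h<\Lambda$ (the paper merely says ``for $h$ small enough''), and you note that $U_h^*$, being a dilation, preserves the order-of-vanishing condition defining $V_{K+1}$.
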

\begin{proof}
We have that for $h$ small enough:
\begin{align*}
    \tilde R_+\tilde R_-=\hat R_+\One_{P\le \frac{\Lambda}{h}}\hat R_-=I_{\C^{K+1}}
\end{align*}
and also
\begin{align*}
    \tilde R_-\tilde R_+=\mathbbm{1}_{P_h\le\Lambda}U_h^*\hat{R}_-\hat{R}_+U_h\mathbbm{1}_{P_h\le\Lambda}=U_h^*\mathbbm{1}_{P\le \frac{\Lambda}{h}}\hat{R}_-\hat{R}_+\mathbbm{1}_{P\le \frac{\Lambda}{h}}U_h
\end{align*}
hence
\begin{align*}
    \One_{P_h\le\Lambda}-\tilde R_-\tilde R_+&=U_h^*\mathbbm{1}_{P\le \frac{\Lambda}{h}}^2U_h-U_h^*\mathbbm{1}_{P\le \frac{\Lambda}{h}}\hat{R}_-\hat{R}_+\mathbbm{1}_{P\le \frac{\Lambda}{h}}U_h\\
    &=U_h^*\mathbbm{1}_{P\le \frac{\Lambda}{h}}(I-\hat{R}_-\hat{R}_+)\mathbbm{1}_{P\le \frac{\Lambda}{h}}U_h\\
    &=U_h^*(I-\hat{R}_-\hat{R}_+)\mathbbm{1}_{P\le \frac{\Lambda}{h}}U_h
\end{align*}
which has range $V_{K+1}$.
\end{proof}

In the argument to come it will be necessary to apply $\tilde R_+$ to $\chi(x,hD)\wh{F}u.$ The analysis is given by the following lemma, which makes use of Lemma \ref{PointwiseLem} in a fundamental way.
\begin{lem}\label{CommutingRPlus}
For any $u\in L^2(\R)$ we have
\begin{equation}\label{CommutingRPEq}
    \tilde R_+\chi(x,hD)\wh{F}u=L_K\tilde R_+u+Ru
\end{equation}
where $L_K$ is a lower triangular matrix independent of $u$ and $|Ru|\le C_{\epsilon}h^{\frac{m-K-1}{2}-\epsilon}\|u\|_B$.
\end{lem}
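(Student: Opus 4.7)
The plan is to commute the rescaling $U_h$ through $\chi(x,hD)\wh F$, apply Lemma \ref{PointwiseLem} to eliminate $\widetilde\chi$ up to algebraic errors on pointwise derivatives, and then read off $L_K$ from the explicit scaling action of $\wh F$ on Taylor coefficients at $0$.

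The first step is to show that the spectral projection $\mathbbm{1}_{P_h\le\Lambda}$ built into $\tilde R_+$ is essentially transparent, at the cost of $\Ohi\|u\|_B$ errors. Using the exact Egorov identity $\chi(x,hD)\wh F = \wh F \Op_h(\chi\circ F)$, the symbol $\chi\circ F$ is compactly supported in a phase-space region contained in $\{P_h<\Lambda\}$, so with a smooth cutoff $\phi(P_h)$ equal to $1$ on $\supp(\chi\circ F)$ and supported in $\{t<\Lambda\}$, Lemma \ref{RnFunCalc} yields that both $(I-\phi(P_h))\chi(x,hD)\wh F$ and its adjoint are $\Ohi$-negligible in every Sobolev norm. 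This lets me replace $u$ by $\tilde u:=\mathbbm{1}_{P_h\le\Lambda}u$ and drop the $\mathbbm{1}_{P_h\le\Lambda}$ inside $\tilde R_+$, since $\hat R_+U_h$ evaluates finitely many pointwise derivatives that are controlled by a fixed Sobolev norm with polynomial $h^{-1}$ losses absorbed via Proposition \ref{NormEquivalence}.

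The reduced quantity $\hat R_+U_h\chi(x,hD)\wh F\tilde u$ is then simplified via $U_h\chi(x,hD)=\widetilde\chi U_h$ (from the proof of Proposition \ref{MainEstimate}) and the commutation $U_h\wh F=\wh F U_h$ (both are multiplicative rescalings). Setting $v:=\wh F U_h\tilde u$, each component of $\hat R_+\widetilde\chi v = R_+(e^{x^2/2}\widetilde\chi v)$ is a Leibniz combination of derivatives $\dd^j(\widetilde\chi v)(0)$ for $j\le K$. Applying Lemma \ref{PointwiseLem} termwise gives $|\hat R_+\widetilde\chi v - \hat R_+v|\le Ch^{(m-K-1)/2-\epsilon}\|v\|_Y$, and a direct change of variables bounds $\|v\|_Y\le\|U_h\tilde u\|_Y\le\|u\|_B$, since $\wh F$ contributes only $\lambda^{-k}$ factors.

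Finally, $L_K$ is identified as follows. Since $\wh Fw(x)=\lambda^{-1/2}w(x/\lambda)$ implies $\dd^j(\wh Fw)(0)=\lambda^{-j-1/2}\dd^jw(0)$, we have $R_+\wh Fw = D_\lambda R_+w$ with $D_\lambda:=\mathrm{diag}(\lambda^{-j-1/2})_{j=0}^K$. The paper's relation $\hat R_+=B_K R_+$ with $B_K$ lower unit triangular then yields $\hat R_+\wh Fw = B_KD_\lambda B_K^{-1}\hat R_+w$, so $L_K:=B_KD_\lambda B_K^{-1}$ is lower triangular with diagonal entries $\lambda^{-(2k+1)/2}$. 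Taking $w=U_h\tilde u$ gives $\hat R_+w=\tilde R_+u$, completing the formula. The main obstacle throughout is the careful bookkeeping for the $\mathbbm{1}_{P_h\le\Lambda}$ projections: because $\hat R_+U_h$ evaluates pointwise, $L^2$-smallness alone is insufficient, and one must propagate $\Ohi$-smallness in all Sobolev norms. This is possible thanks to the strong microlocal confinement of $\chi(x,hD)\wh F$, which allows Lemma \ref{RnFunCalc} to be applied to arbitrary order, combined with the fact that on $\Ran(\mathbbm{1}_{P_h\le\Lambda})$ all Sobolev norms are polynomially equivalent to the $L^2$ norm.
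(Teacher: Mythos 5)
Your proposal reproduces the paper's argument in all essential respects. Both proceed by (i) removing the spectral cutoffs $\mathbbm{1}_{P_h\le\Lambda}$ up to $\Ohi$ errors in norms that control the pointwise derivatives, (ii) commuting $U_h$ through $\chi(x,hD)$ and $\wh F$ to produce the rescaled pseudodifferential $\widetilde\chi$ (or $\widetilde{\chi_\lambda}$), (iii) invoking Lemma~\ref{PointwiseLem} to replace that pseudodifferential by the identity at $x=0$ with $O(h^{(m-K-1)/2-\epsilon})$ loss, and (iv) reading off $L_K=B_K D_K B_K^{-1}$ from the exact scaling action of $\wh F$ on Taylor coefficients. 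The only difference in organization is that you apply $\wh F$ to the Taylor coefficients \emph{before} invoking Lemma~\ref{PointwiseLem} (working with $\hat R_+\widetilde\chi v$ for $v=\wh F U_h\tilde u$), whereas the paper first applies Egorov's theorem and then Lemma~\ref{PointwiseLem} to $R_+\widetilde{\chi_\lambda}U_h\mathbbm{1}_{P_h\le\Lambda}u$; this is a purely cosmetic reordering.

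One small bookkeeping point worth noting: the single cutoff $\phi(P_h)$ you introduce, with $\phi=1$ on $\supp(\chi\circ F)$, lets you move $\mathbbm{1}_{P_h\le\Lambda}$ past $\chi(x,hD)\wh F$ on the \emph{right} (since $\Op_h(\chi\circ F)(I-\phi(P_h))=\Ohi$), but dropping $\mathbbm{1}_{P_h\le\Lambda}$ from the \emph{left} side of $\chi(x,hD)\wh F$, which is what the definition of $\tilde R_+$ requires, needs $\phi=1$ on the image of $\supp\chi$ (not $\supp(\chi\circ F)$) under $x^2+\xi^2$. The fix is trivial: choose $\phi$ equal to $1$ on the union of the two relevant regions, which is still compactly contained in $\{t<\Lambda\}$ by the support hypothesis on $\chi$ in Proposition~\ref{MainEstimate}. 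The paper sidesteps this by first applying exact Egorov and then removing the cutoffs on each side with appropriately chosen functions, exactly as in the proof of Proposition~\ref{MainEstimate}. Your bound $\|\wh F U_h\tilde u\|_Y\le\|U_h\tilde u\|_Y\le\|u\|_B$ is correct, using $\la\lambda x\ra\ge\la x\ra$ and $\lambda^{-k}\le 1$.
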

\begin{proof}
We compute the following (using analogous arguments to those of Lemma \ref{NormEquivalence}):
\begin{align}
\begin{split}\label{CommutingRPPrep}
    \tilde R_+\chi(x,hD)\wh{F}u&=\hat{R}_+U_h\mathbbm{1}_{P_h\le\Lambda}\chi(x,hD)\wh{F}u\\
    &=\hat{R}_+U_h\chi(x,hD)\wh{F}u+\Ohi\|u\|_{L^2(\R)}\\
    &=\hat{R}_+\wh{F}U_h\chi_{\lambda}(x,hD)u+\Ohi\|u\|_{L^2(\R)}\\
    &=\hat{R}_+\wh{F}U_h\chi_{\lambda}(x,hD)\mathbbm{1}_{P_h\le\Lambda}u+\Ohi\|u\|_{L^2(\R)}\\
    &=\hat{R}_+\wh{F}\widetilde{\chi_{\lambda}}U_h\mathbbm{1}_{P_h\le\Lambda}u+\Ohi\|u\|_{L^2(\R)}\\
    &=B_KD_KR_+\widetilde{\chi_{\lambda}}U_h\mathbbm{1}_{P_h\le\Lambda}u+\Ohi\|u\|_{L^2(\R)}
    \end{split}
\end{align}
 where $D_K=\frac{1}{\sqrt{\lambda}}\mathrm{diag}(1,\frac{1}{\lambda},\dots, \frac{1}{\lambda^K})$.
 
 We must now commute $R_+$ and $\widetilde{\chi_{\lambda}}$. We have by Lemma \ref{PointwiseLem}:
 \begin{align*}
     \dd_x^k\widetilde{\chi_{\lambda}}U_h\mathbbm{1}_{P_h\le\Lambda}u(x)|_{x=0}&=\dd_x^kU_h\mathbbm{1}_{P_h\le\Lambda}u(x)|_{x=0}+O_m(h^{\frac{m-k-1}{2}-\epsilon})\|U_h\One_{P_h\le\Lambda}u\|_Y.
 \end{align*}
 Then
 \begin{equation}\label{CommutingRPFinal}
     B_KD_KR_+\widetilde{\chi_{\lambda}}U_h\mathbbm{1}_{P_h\le\Lambda}u=B_KD_KB_K^{-1}\tilde R_+u+O_m(h^{\frac{m-K-1}{2}-\epsilon})\|u\|_B.
 \end{equation}
Combining (\ref{CommutingRPPrep}) and (\ref{CommutingRPFinal}) gives (\ref{CommutingRPEq}).
\end{proof}

\subsection{Inverting the Grushin Problem}
Let
\begin{equation*}
    \mathcal{P}(z):=\begin{pmatrix}I-z\chi(x,hD)\wh{F}&\tilde R_-\\\tilde R_+&0 \end{pmatrix}.
\end{equation*}

Split $u\in L^2(\R)$ as $u=u_1+u_2$ with $u_1=\One_{P_h\le\Lambda}u$ and $u_2=\One_{P_h\ge\Lambda}u$, and similarly for $v=v_1+v_2$. Consider $\mathcal{P}$ as an operator acting on $\One_{P_h\le\Lambda}L^2(\R)\oplus\One_{P_h\ge\Lambda}L^2(\R)\oplus\C^{K+1}$. Let
\begin{equation*}
    \mathcal{P}(z)\begin{pmatrix}u\\u_-\end{pmatrix}=\begin{pmatrix}v\\v_+\end{pmatrix}
\end{equation*}
so
\begin{equation*}\label{CGru1}
    (I-z\chi\wh{F})(u_1+u_2)+\tilde{R}_-u_-=v_1+v_2
\end{equation*}
\begin{equation*}\label{CGru2}
    \tilde{R}_+(u_1+u_2)=v_+.
\end{equation*}

The equations decouple (up to $\Ohi$ errors) as
\begin{equation*}\label{CGru1}
    (I-z\chi\wh{F}+R_0)u_1+R_1u_2+\tilde{R}_-u_-=v_1
\end{equation*}
\begin{equation*}
    (I+R_3)u_2+R_2u_1=v_2
\end{equation*}
\begin{equation*}\label{CGru2}
    \tilde{R}_+u_1=v_+.
\end{equation*}
where $R_0,R_1, R_2, R_3=\Ohi_{L^2\to L^2}$.

Define the following operator on $L^2(\R)\oplus\C^{K+1}=\One_{P_h\le\Lambda}L^2(\R)\oplus\One_{P_h\ge\Lambda}L^2(\R)\oplus\C^{K+1}$:

\begin{equation}
\label{GrushinAppInverse}
    \tilde{\mathcal{E}}(z):=\begin{pmatrix}\tilde{E}&0&\tilde{E}_+\\0&I&0\\\tilde{E}_-&0&\tilde{E}_{-+} \end{pmatrix}
\end{equation}
and define the nontrivial entries as follows:

\begin{equation*}\label{E}
    \tilde E:=\sum_{j=0}^{\infty}z^j(\chi(x,hD)\wh{F})^j(I-\tilde R_-\tilde R_+)
\end{equation*}
\begin{equation*}\label{EP}
    \tilde E_+:=\tilde R_--\sum_{j=0}^{\infty}z^j(\chi(x,hD)\wh{F})^j(I-\tilde R_-\tilde R_+)(I-z\One_{P_h\le\Lambda}\chi(x,hD)\wh{F})\tilde R_-
\end{equation*}
\begin{equation*}\label{EM}
    \tilde E_-:=\tilde R_+
\end{equation*}
\begin{equation*}\label{EMP}
    \tilde E_{-+}:=-(I_{\C^{K+1}}-zL_K).
\end{equation*}
Recall that by Proposition \ref{LocalGrushins}, $I-\tilde R_-\tilde R_+$ has range $V_{K+1}$, so by Proposition \ref{MainEstimate}, there is a $c>0$ independent of $m$ such that for $h$ small enough 
\begin{equation}\label{DefofE}
\tilde E, \tilde E_+ \text{ defined and bounded for } |z|\le c\lambda^{\frac{K+1}{4}}.
\end{equation}

\begin{lem}\label{Grushin Parametrix}
 For all $m\in\N, K\le m-2$, there is an $r_K$  with $r_K\to\infty$ such that for $|z|\le r_K$, $h$ small enough, the operator $\tilde{\mathcal{E}}(z)$ is well-defined and satisfies
\begin{equation*}
    \mathcal{P}(z)\tilde{\mathcal{E}}(z)=I+\mathcal{T}(z)
\end{equation*}
with 
\begin{equation}\label{TBound}\|\mathcal{T}(z)\|_{B\oplus\C^{K+1}\to B\oplus\C^{K+1}}=O(h^{\frac{m-K-1}{4}-\epsilon}).\end{equation}

\end{lem}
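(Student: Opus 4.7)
The plan is to verify the parametrix identity by explicit block-matrix computation on $L^2(\R)\oplus\C^{K+1}$, split further via $L^2(\R) = \One_{P_h\le\Lambda}L^2(\R) \oplus \One_{P_h\ge\Lambda}L^2(\R)$. I take $r_K = c\lambda^{(K+1)/4}$ as in (\ref{DefofE}), so that $\tilde E$ and $\tilde E_+$ are defined and uniformly bounded on $B$ for $|z|\le r_K$. Two algebraic facts will do all the work: the telescoping identity $(I - z\chi(x,hD)\wh F)\sum_{j=0}^{\infty}z^j(\chi(x,hD)\wh F)^j = I$, valid on the range of $(I-\tilde R_-\tilde R_+)\subseteq V_{K+1}$; and the commutation relation $\tilde R_+ \chi(x,hD)\wh F = L_K\tilde R_+ + R$ furnished by Lemma \ref{CommutingRPlus}.

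For the top row of $\mathcal P\tilde{\mathcal E}$, the telescoping identity gives $(I-z\chi\wh F)\tilde E = I-\tilde R_-\tilde R_+$, and combined with $\tilde R_-\tilde R_+ v = \tilde R_-\tilde R_+ v_1$ (because $\tilde R_+\One_{P_h\ge\Lambda}=0$) the $L^2\to L^2$ block of $\mathcal P\tilde{\mathcal E}$ reduces to $I - z\chi\wh F\One_{P_h\ge\Lambda}$. Since $\supp\chi$ lies in $\{|x|^2+|\xi|^2<\Lambda/(2\lambda^2)\}$, the exact Egorov theorem places $\supp(\chi\circ F)$ strictly inside $\{P<\Lambda\}$, so $\chi\wh F\,\One_{P_h\ge\Lambda}=\Ohi$ by Lemma \ref{RnFunCalc}. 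For the $\C^{K+1}\to L^2$ block, the same telescoping applied to the series in $\tilde E_+$, together with $\tilde R_-\tilde E_{-+}=-\tilde R_-+z\tilde R_-L_K$, the identities $\tilde R_+\tilde R_-=I$ and $\tilde R_+\One_{P_h\le\Lambda}=\tilde R_+$, collapses everything to $-z\One_{P_h\ge\Lambda}\chi\wh F\tilde R_- - z\tilde R_-(\tilde R_+\chi\wh F\tilde R_- - L_K)$. Invoking Lemma \ref{CommutingRPlus} with argument $\tilde R_- u_-$ gives $\tilde R_+\chi\wh F\tilde R_- = L_K + R\tilde R_-$, cutting the error down to $\Ohi - z\tilde R_- R\tilde R_- = O(h^{(m-K-1)/2-\epsilon})$ by the remainder bound in Lemma \ref{CommutingRPlus}.

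The bottom row requires compactifying $\tilde R_+\tilde E$. Iterating Lemma \ref{CommutingRPlus} together with $\tilde R_+(I-\tilde R_-\tilde R_+)=0$, an induction on $j$ yields for $j\ge 1$
\begin{equation*}
    \tilde R_+(\chi\wh F)^j(I-\tilde R_-\tilde R_+) = \sum_{i=0}^{j-1} L_K^i\, R\, (\chi\wh F)^{j-1-i}(I-\tilde R_-\tilde R_+).
\end{equation*}
Multiplying by $z^j$, summing, and reordering (the geometric series $\sum z^i L_K^i$ converges to $(I-zL_K)^{-1}$ since $\|L_K\|\le\lambda^{-1/2}$, after shrinking $c$ in (\ref{DefofE}) if necessary) gives the clean identity $\tilde R_+\tilde E = z(I-zL_K)^{-1}R\,\tilde E$; an identical manipulation of $\tilde E_+$ gives $\tilde R_+\tilde E_+ = I - z(I-zL_K)^{-1}R\,\tilde E(I-z\One_{P_h\le\Lambda}\chi\wh F)\tilde R_-$. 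Since $\|R\|_{B\to\C^{K+1}} = O(h^{(m-K-1)/2-\epsilon})$ and all other factors are uniformly bounded on $|z|\le r_K$, both $\tilde R_+\tilde E$ and $\tilde R_+\tilde E_+ - I$ are $O(h^{(m-K-1)/2-\epsilon})$. Assembling the four blocks gives $\mathcal T(z)=O(h^{(m-K-1)/2-\epsilon})$, which in particular satisfies (\ref{TBound}). The main obstacle is this compactification step in the bottom row: extracting a closed expression from a nested double series by commuting $\tilde R_+$ past arbitrarily many copies of $\chi\wh F$ and justifying the reordering takes the most care. Every other ingredient --- the microlocal disjointness of $\chi$ from $\{P_h\ge\Lambda\}$, the pointwise bound from Lemma \ref{PointwiseLem} packaged into $R$, and the contractivity $\|L_K\|\le\lambda^{-1/2}$ --- is already assembled in Sections \ref{s:prelims}--\ref{ss:gru}.
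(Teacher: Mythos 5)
There is a genuine gap in the bottom-row "compactification" step, and it is exactly the point the paper's proof is designed to work around. Your clean identity $\tilde R_+\tilde E = z(I-zL_K)^{-1}R\,\tilde E$ comes from reordering the double series
\begin{equation*}
\sum_{j\ge 1}\sum_{i=0}^{j-1}z^{j}L_K^{\,i}\,R\,(\chi\wh F)^{j-1-i}(I-\tilde R_-\tilde R_+)
\end{equation*}
into a product of two geometric series. That reordering (Fubini) requires absolute convergence, and the factor $\sum_i z^i L_K^i$ converges only for $|z|<1/\rho(L_K)=\lambda^{1/2}$ (the spectral radius of $L_K = B_K D_K B_K^{-1}$ is $\lambda^{-1/2}$; $\|L_K\|\le\lambda^{-1/2}$ is not asserted in the paper and need not hold, but even the spectral-radius bound caps convergence at $|z|<\lambda^{1/2}$). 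Shrinking $c$ in (\ref{DefofE}) only rescales $r_K=c\lambda^{(K+1)/4}$ by a constant; it cannot keep $r_K$ below the fixed threshold $\lambda^{1/2}$ while also letting $r_K\to\infty$, so the resummation cannot be justified in the regime where the Lemma needs to hold. The second issue compounds the first: even granting the identity, the claim that "all other factors are uniformly bounded on $|z|\le r_K$" is false for $(I-zL_K)^{-1}$, which has poles precisely at $z=\lambda^{k+1/2}$ for $k=0,\dots,K$ --- all of which lie inside $\{|z|\le r_K\}$ once $K\ge 2$. Those poles are exactly where the eigenvalues of $\Op_h\chi\wh F$ are supposed to live, so a bound $\tilde R_+\tilde E = O(h^s)$ uniform across them is not available through your closed form.

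The paper sidesteps both problems by \emph{not} resumming. It truncates the Neumann series at $j=J$, bounds the tail $\sum_{j>J}$ by $\|\tilde R_+\|\sum_{j>J}(|z|\gamma)^j$ using the contraction $\gamma\approx C\lambda^{-(K+1)/4}$ of $(\chi\wh F)^j$ on $V_{K+1}$ (Proposition \ref{MainEstimate}), and bounds the head $\sum_{j\le J}$ by iterating Lemma \ref{CommutingRPlus} at most $J$ times, which gives $\|\tilde R_+(\chi\wh F)^j(I-\tilde R_-\tilde R_+)\|\le jCh^s$ without ever summing the geometric series in $L_K$. Choosing $J=\frac{4s}{K\log\lambda}\log\frac1h$ balances $\lambda^{-JK/8}$ against $Jr_K^Jh^s$; this forces the \emph{smaller} radius $r_K = c\lambda^{K/8}$ (not $c\lambda^{(K+1)/4}$), so that $r_K^J h^s$ remains $O(h^{s/2})$ after the optimization. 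Your top-row computations ($T_{11}, T_{12}, T_{1+}$, the microlocal disjointness from $\{P_h\ge\Lambda\}$, and the use of Lemma \ref{CommutingRPlus} on $\tilde R_-u_-$) match the paper and are fine; it is the full resummation in the bottom row that needs to be replaced by the truncate-and-optimize argument.
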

\begin{proof}
Let $r_K=c\lambda^{\frac{K}{8}}$, where $c$ is given in (\ref{DefofE}).

Let
\begin{equation*}\label{TDecay}
    \mathcal{T}(z)=\begin{pmatrix}T_{11}&T_{12}&T_{1+}\\
    T_{21}&T_{22}&T_{2+}\\T_{+1}&T_{+2}&T_{++}\end{pmatrix}
\end{equation*}
on $\One_{P_h\le\Lambda}L^2(\R)\oplus\One_{P_h\ge\Lambda}L^2(\R)\oplus\C^{K+1}$, so expanded out we have:
\begin{equation}\label{TDef}
    \begin{pmatrix}I+T_{11}&T_{12}&T_{1+}\\
    T_{21}&I+T_{22}&T_{2+}\\T_{+1}&T_{+2}&I+T_{++}\end{pmatrix}=\\\begin{pmatrix}I-z\chi(x,hD)\wh{F}+R_0&R_1&\tilde{R}_-\\R_2&I+R_3&0\\\tilde{R}_+&0&0 \end{pmatrix}\begin{pmatrix}\tilde{E}&0&\tilde{E}_+\\0&I&0\\\tilde{E}_-&0&\tilde{E}_{-+} \end{pmatrix}
\end{equation}
It remains to compute the product entries in (\ref{TDef}).
We have by telescoping that:
\begin{align*}
\begin{split}\label{T11}
    I+T_{11}&=(I-z\chi(x,hD)\wh F)\left(\sum_{j=0}^{\infty}z^j(\chi(x,hD)\wh{F})^j(I-\tilde R_-\tilde R_+)\right)+\tilde R_-\tilde R_++R_0\tilde{E}\\
    &=I-\tilde R_-\tilde R_++\tilde R_-\tilde R_++R_0\tilde E\\
    &=I+\Ohi.
    \end{split}
\end{align*}
The equation
\begin{equation*}\label{T12}
    T_{12}=R_1=\Ohi
\end{equation*}
is trivial, and for $T_{1+}$ we compute by Lemma \ref{CommutingRPlus}:
\begin{align*}
\begin{split}\label{T1+}
    T_{1+}&=(I-z\chi(x,hD)\wh F)\tilde R_--(I-\tilde R_-\tilde R_+)(I-z\One_{P_h\le\Lambda}\chi(x,hD)\wh{F})\tilde R_-\\
    &\quad+\tilde R_-(-I_{\C^{K+1}}+zL_K)+R_0\tilde E_+\\
    &=(I-z\chi(x,hD)\wh F)\tilde R_--(I-\tilde R_-\tilde R_+)(I-z\chi(x,hD)\wh{F})\tilde R_-+\tilde R_-(-I_{\C^{K+1}}+zL_K)\\
    &\quad+(I-\tilde R_-\tilde R_+)(I-z\One_{P_h\ge\Lambda}\chi(x,hD)\wh{F})\tilde R_-+R_0\tilde E_+\\
    &=\tilde R_-\tilde R_+(I-z\chi(x,hD)\wh{F})\tilde R_-+\tilde R_-(-I_{\C^{K+1}}+zL_K)\\
    &\quad-(I-\tilde R_-\tilde R_+)(z\One_{P_h\ge\Lambda}\chi(x,hD)\wh{F})\tilde R_-+R_0\tilde E_+\\
   &=\tilde R_-R\tilde R_--(I-\tilde R_-\tilde R_+)(z\One_{P_h\ge\Lambda}\chi(x,hD)\wh{F})\tilde R_-+R_0\tilde E_+\\
   &=\Ohi.\end{split}
\end{align*}
The next three
\begin{equation*}\label{T2*}
    T_{21}=R_2\tilde E,\quad I+T_{22}=I+R_3,\quad T_{2+}=R_2\tilde E_+
\end{equation*} are trivially all equal to $\Ohi$, and $T_{+2}=0$.

The bounds on $T_{+1}$ and $T_{+3}$ require a more delicate argument. Fix $J$ to determined later. Then we have:
\begin{align*}
    T_{+1}&=\tilde R_+\sum_{j=0}^{\infty}z^j(\chi(x,hD)\wh{F})^j(I-\tilde R_-\tilde R_+)\\
    &=\sum_{j=0}^{J}z^j\tilde R_+(\chi(x,hD)\wh{F})^j(I-\tilde R_-\tilde R_+)+\tilde R_+\sum_{j=J+1}^{\infty}z^j(\chi(x,hD)\wh{F})^j(I-\tilde R_-\tilde R_+).
\end{align*}
By Lemma \ref{MainEstimate}, for $|z|\le r_k$
\begin{align}
\begin{split}\label{LateTerms1}
    &\left\|\tilde R_+\sum_{j=J+1}^{\infty}z^j(\chi(x,hD)\wh{F})^j(I-\tilde R_-\tilde R_+)\right\|_{B\to\C^{K+1}}\\&\le C\sum_{j=J+1}^{\infty}z^j\|(\chi(x,hD)\wh{F})^j(I-\tilde R_-\tilde R_+)\|_{B\to B}\le C\sum_{j=J+1}^{\infty}\lambda^{-\frac{K}{8}}\le C\lambda^{-\frac{JK}{8}}.
    \end{split}
\end{align}
Meanwhile repeated application of Lemma \ref{CommutingRPlus} gives
\begin{equation*}
    R_+(\chi(x,hD)\wh{F})^j=L^j\tilde R_++\sum_{i=0}^{j-1}L^iR(\chi(x,hD)\hat{F})^{j-1-i}
\end{equation*}
with
\begin{equation*}
    \left\|\sum_{i=0}^{j-1}L^iR(\chi(x,hD)\hat{F})^{j-1-i}\right\|_{B\to\C^{K+1}}\le CJh^s
\end{equation*}
for $s=\frac{m-K-1}{2}-\epsilon$ and $C=C(\epsilon)$. Then
\begin{equation}\label{EarlyTerms1}
    \left\|\sum_{j=0}^{J}z^j\tilde R_+(\chi(x,hD)\wh{F})^j(I-\tilde R_-\tilde R_+)\right\|_{B\to\C^K}\le CJr_K^Jh^s=CJ\lambda^{\frac{JK}{8}}h^s.
\end{equation}
Combining (\ref{LateTerms1}) and (\ref{EarlyTerms1}) gives:
\begin{equation*}
    \|T_{+1}\|_{B\to\C^K}\le C\left(\lambda^{-\frac{JK}{8}}+J\lambda^{\frac{JK}{8}}h^s\right).
\end{equation*}
To optimize, set $J=\frac{4s}{K\log \lambda}\log\frac{1}{h}$, which gives the bound
\begin{equation}\label{TPlus1Bound}
    \|T_{+1}\|_{B\to\C^{K+1}}\le Ch^{\frac{s}{2}}\log\frac{1}{h}.
\end{equation}

Finally to bound $T_{++}$ we use (\ref{TPlus1Bound}) to get:
\begin{align*}
\begin{split}\label{T++}
    I+T_{++}&=\tilde R_+\tilde R_--\tilde R_+\sum_{j=0}^{\infty}z^j(\chi(x,hD)\wh{F})^j(I-\tilde R_-\tilde R_+)(I-z\One_{P_h\le\Lambda}\chi(x,hD)\wh{F})\tilde R_-\\
    &=I-T_{+1}\tilde R_-\\
    &=I+O\left(h^{\frac{s}{2}}\log\frac{1}{h}\right)_{\C^{K+1}\to \C^{K+1}}.
    \end{split}
\end{align*}
By choosing a new $\epsilon$, this proves the bound (\ref{TBound}) for $h$ small enough, and completes the proof of the lemma.
\end{proof}

\begin{cor}\label{grushininverse}
For all $m\in\N, K\le m-2$, there is an $r_K$  with $r_K\to\infty$, such that for $h$ small enough there is a $\mathcal{E}(z):L^2(\R)\oplus \C^{K+1}\to L^2(\R)\oplus \C^{K+1}$ defined on $\{|z|\le R_K\}$ such that
\begin{equation*}
\mathcal{P}(z)\mathcal{E}(z)=\mathcal{E}(z)\mathcal{P}(z)=I.
\end{equation*}
\end{cor}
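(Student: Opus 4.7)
The plan is to invert $\mathcal{P}(z)$ by applying a Neumann series to the error term from Lemma \ref{Grushin Parametrix}, and then to upgrade the resulting right inverse to a two-sided inverse via a Fredholm--index argument.

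First, given $K$, I would fix $m$ large enough (e.g.\ $m \ge K+3$) and pick an $\epsilon>0$ so that the exponent $\tfrac{m-K-1}{4}-\epsilon$ in \eqref{TBound} is strictly positive. Lemma \ref{Grushin Parametrix} then gives $\mathcal P(z)\tilde{\mathcal E}(z) = I + \mathcal T(z)$ with $\|\mathcal T(z)\|_{B\oplus\C^{K+1}\to B\oplus\C^{K+1}}\to 0$ as $h\to 0$, uniformly for $|z|\le r_K$. Taking $h$ so small that $\|\mathcal T(z)\|\le\tfrac12$, the Neumann series inverts $I+\mathcal T(z)$, and
\[
\mathcal E_R(z)\,:=\,\tilde{\mathcal E}(z)\bigl(I+\mathcal T(z)\bigr)^{-1}
\]
is a bounded right inverse of $\mathcal P(z)$ on $B\oplus\C^{K+1}$. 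By the norm equivalence in Proposition \ref{NormEquivalence}, $\mathcal E_R(z)$ is also bounded on $L^2(\R)\oplus\C^{K+1}$ (with $h$-dependent constants, which we allow since $h$ is fixed).

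To promote this right inverse to a two-sided inverse, I would verify that $\mathcal P(z)$ is Fredholm of index $0$ on $L^2(\R)\oplus\C^{K+1}$. The operator $\mathcal P(0)=\bigl(\begin{smallmatrix}I & \tilde R_-\\\tilde R_+ & 0\end{smallmatrix}\bigr)$ is in fact invertible: using $\tilde R_+\tilde R_-=I_{\C^{K+1}}$ from Proposition \ref{LocalGrushins}, a direct computation shows
\[
\mathcal P(0)^{-1}=\begin{pmatrix} I-\tilde R_-\tilde R_+ & \tilde R_-\\ \tilde R_+ & -I\end{pmatrix}.
\]
The difference $\mathcal P(z)-\mathcal P(0)=-z\bigl(\begin{smallmatrix}\chi(x,hD)\wh F & 0\\ 0 & 0\end{smallmatrix}\bigr)$ is a compact operator on $L^2(\R)\oplus\C^{K+1}$: since $\chi\in C_c^\infty(\R^2)$, the operator $\chi(x,hD)$ has a Schwartz kernel and is trace class (hence compact), and composing with the unitary $\wh F$ preserves compactness. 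Therefore $\mathcal P(z)$ is a compact perturbation of an invertible operator, so it is Fredholm of index $0$. A Fredholm operator of index $0$ that admits a right inverse is automatically invertible, and the two-sided inverse coincides with $\mathcal E_R(z)$. Setting $\mathcal E(z):=\mathcal E_R(z)$ yields the corollary.

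The main technical obstacle is tracking how small $h$ must be taken in terms of $K$, since the norms $B$ and $L^2(\R)$ are equivalent only up to $h^{\pm m/2}$ factors by Proposition \ref{NormEquivalence}; this is harmless here because $m$ and $K$ are fixed before $h$ tends to $0$. If one preferred to avoid invoking compactness, an alternative route is to construct a \emph{left} parametrix by the same strategy as in Lemma \ref{Grushin Parametrix} (computing $\tilde{\mathcal E}'(z)\mathcal P(z)=I+\mathcal T'(z)$ for an appropriate $\tilde{\mathcal E}'$) and then conclude that left and right inverses must agree.
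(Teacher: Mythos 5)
Your proposal is correct and takes essentially the same route as the paper: a Neumann series applied to the error term from Lemma \ref{Grushin Parametrix} produces a bounded right inverse, and then an index-$0$ Fredholm argument, anchored in the explicit invertibility of $\mathcal P(0)$, upgrades it to a two-sided inverse. The one small improvement is that you justify Fredholmness directly by noting that $\mathcal P(z)-\mathcal P(0)$ is compact (since $\chi(x,hD)$ has a Schwartz kernel, hence is trace class, and $\wh F$ is unitary), whereas the paper's appeal to ``continuity of the index'' tacitly presupposes that $\mathcal P(z)$ is Fredholm for every $z$ without spelling out why.
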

\begin{proof}
We first find a right inverse via Neumann series. For $h$ small enough we may define
\begin{equation}\label{NeumannDef}
    \mathcal{E}(z):=\tilde{\mathcal{E}}(z)\left(\sum_{j=0}^{\infty}(-1)^j\mathcal{T}(z)^j\right)
\end{equation}
which gives that $\mathcal{P}(z)\mathcal{E}(z)=I.$

We now claim that $\mathcal{P}(z)$ is an analytic family of Fredholm operators of index $0$. For indeed for $z=0$ we have
\begin{equation*}
    \mathcal{P}(0)=\begin{pmatrix}I&\tilde R_-\\\tilde R_+&0 \end{pmatrix}
\end{equation*}
and one may easily check that
\begin{equation*}
\mathcal{P}(0)^{-1}=\begin{pmatrix}I-\tilde R_-\tilde R_+&\tilde R_-\\\tilde R_+&-I \end{pmatrix}
\end{equation*}
is a two-sided inverse. Then by continuity $\mathcal{P}(z)$ has index zero for all $z$.

This shows that the right inverse $\mathcal{E}(z)$ defined in (\ref{NeumannDef}) is a two-sided inverse, as desired.
\end{proof}

We may now estimate the spectrum of $\chi\wh{F}$ using the Schur complement formula. We have:
\begin{equation*}
    \left(\mathcal{T}^j\right)_{+-}=O(h^{pj})
\end{equation*}
for $p=\frac{m-K-1}{4}-\epsilon$. This gives that
\begin{equation*}
    E_{-+}=-I+zL_K+O(h^p)
    =B_K(-I+zD_K+O(h^p))B_K^{-1},
\end{equation*}
which shows that (using boundedness of all operators involved to bound $\frac{1}{z}$ below) $\frac{1}{z}\in\sigma(\chi\wh{F})$ if and only if $D_K-\frac{1}{z}I=O(h^p)$. This implies $\frac{1}{z}\in\sigma_{h^p}(D_K)$ (where $\sigma_\epsilon$ denotes the $\epsilon$-pseudospectrum), meaning by self-adjointness that $\frac{1}{z}=\lambda^{\frac{2k+1}{2}}+O(h^p)$.

\begin{proof}[Proof of Theorem \ref{mainL2}]

We generalize the above to metaplectic operators on $L^2(\R)$. Let $M\in Sp(2,\R)$ have eigenvalues $\lambda,\lambda^{-1}$ with $\lambda>1$. Then $M=Q^{-1}FQ$, where $F=\begin{pmatrix}\lambda&0\\0&\lambda^{-1}\end{pmatrix}$ is as before, so 
\begin{equation}
\label{Qconj}
   \Op_h\chi\wh{M}=\Op_h\chi\wh{Q^{-1}}\wh{F}\wh{Q}
   =\wh{Q^{-1}}\Op_h(\chi\circ Q^{-1})\wh{F}\wh{Q}.
\end{equation}
and hence has the same spectrum as $\Op_h(\chi\circ Q^{-1})\wh{F}$.

Thus by picking $K$ such that $\frac{1}{r_K}<\delta$ and $m\gg K$ as large as we want, we have shown Theorem \ref{mainL2}.
\end{proof}

In Section \ref{s:torus} of the paper, we will need an actual Grushin problem for the general metaplectic operator. But replacing $\mathcal{P}(z)$ with $\mathcal{P}^Q(z)=\mathcal{Q}^{-1}\mathcal{P}(z)\mathcal{Q}$ where $\mathcal{Q}=\begin{pmatrix}\wh Q&0\\0&I\end{pmatrix}$, we have our new block matrix
\begin{equation}\label{PQ}
\mathcal{P}^Q(z)=\begin{pmatrix} I-z\Op_h(\chi_Q)\wh{M}&\wh Q^{-1}\tilde{R}_-\\\tilde{R}_+\wh Q&0\end{pmatrix}
\end{equation}
where $\chi_Q=\chi\circ Q^{-1}$.
Then replacing $\mathcal{E}(z)$ with $\mathcal{Q}^{-1}\mathcal{E}(z)\mathcal{Q}$ gives the desired well-posedness.

\section{The operator on the torus}
\label{s:torus}

\subsection{Quantization of the Torus} 
\label{ss:microlocalHN}

We describe the space of quantum states on $\T^2$, as seen in \cite{DG}. Let $$H_N:=\mathrm{span}\left\{\frac{1}{\sqrt{N}}\sum_{k\in\Z}\delta_{x=k+\frac{n}{N}}:n\in\{0,1,\dots,N-1\}\right\}$$ with $h=\frac{1}{2\pi N}$. These are precisely the elements of $\Sc$ which are periodic in physical and phase space under the semiclassical Fourier transform $\mathcal{F}_h$. For simplicity, define $Q_n=\frac{1}{\sqrt{N}}\sum_{k\in\Z}\delta_{x=k+\frac{n}{N}}$, which we decide by definition to be an orthonormal basis of $H_N$. One may remark that this is an elementary example of the general geometric Toeplitz quantization studied in \cite{Deleporte}.

We review the pseudodifferential calculus on $H_N$, which also may be found in \cite{CZ},
\cite{DJ}, \cite{Schenck}. Let $a\in \cinf(\T^2)$. Then $a$ lifts to a doubly-periodic function $\tilde{a}$ on $T^*\R$, which is seen to be in the symbol class $S(1)$. Define $$\Op_Na=\Op_ha|_{H_N}$$ which is a map from $H_N$ to itself that is also given in coordinates by $$\Op_N(a)Q_j=\sum_{m=0}^{N-1}A_{mj}Q_m$$ with $$A_{mj}=\sum_{k,l\in\Z}\hat{a}(k,j-m-lN)(-1)^{kl}e^{\pi i\frac{(j+m)k}{N}}.$$

Most results from pseudodifferential calculus carry over to $H_N$, see \cite{CZ},
\cite{DJ} for details. In particular, there is an analogue of Lemma \ref{RnFunCalc}, given as follows.

\begin{lem}\label{TnFunCalc}
If $a\in S(1)$ on $\T^2$, and $\phi\in\cinf_0(\T^2)$, then $\phi(\Op_Na)$ is a Weyl pseudodiffernetial operator on $H_N$. Suppose $b\in S(1)$ on $\T^2$ such that $\supp b\cap a^{-1}(\supp\phi)=\emptyset$. Then $\phi(\Op_N a)\Op_Nb=\Ohi_{H_N\to H_N}$.
\end{lem}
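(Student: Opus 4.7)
The plan is to reduce to Lemma \ref{RnFunCalc} by lifting from $\T^2$ to $T^*\R$. Let $\tilde a,\tilde b\in S(1)$ on $T^*\R$ be the $\Z^2$-periodic lifts of $a,b$, so that $\Op_N a=\Op_h\tilde a|_{H_N}$ and $\Op_N b=\Op_h\tilde b|_{H_N}$, using the fact that $H_N$ is preserved by the semiclassical quantization of any doubly-periodic symbol. Both claims will follow from their Euclidean counterparts provided the relevant symbols can be taken doubly periodic, so that everything descends cleanly to $H_N$.

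For the first assertion, apply Lemma \ref{RnFunCalc} to $\Op_h\tilde a$ on $L^2(\R)$ to obtain $\phi(\Op_h\tilde a)=\Op_h c$ for some $c\in S(1)$ with asymptotic expansion $c\sim\sum_j h^j p_j$ coming from the Helffer--Sj\"ostrand construction. Inspection of the formulas for $p_j$ recalled after Lemma \ref{RnFunCalc} shows that each $p_j$ is a universal polynomial in derivatives of $\tilde a$ multiplied by $\phi^{(2j)}(\tilde a)$, and is therefore $\Z^2$-periodic. A standard Borel summation applied termwise to this sequence of periodic functions produces a doubly-periodic representative of $c$, which descends to a symbol in $S(1)$ on $\T^2$. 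Checking first on polynomial $\phi$, where $(\Op_h\tilde a)^k|_{H_N}=(\Op_N a)^k$, and then extending via the Helffer--Sj\"ostrand contour integral (which commutes with restriction to the invariant subspace $H_N$, since for $z$ off the spectra the resolvent $(z-\Op_h\tilde a)^{-1}$ restricts to $(z-\Op_N a)^{-1}$), one identifies $\Op_h c|_{H_N}=\phi(\Op_N a)=\Op_N c$.

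For the second assertion, exploit the same expansion further: by the explicit formula, each $p_j(x,\xi)$ vanishes whenever $\tilde a(x,\xi)\notin\supp\phi$, and hence on an open neighborhood of $\supp\tilde b$ by the lifted disjoint-support hypothesis. A standard asymptotic argument then gives that every $S(1)$-seminorm of $c$ is $\Ohi$ on a neighborhood of $\supp\tilde b$, so the Moyal product $c\#\tilde b=\Ohi$ in $S(1)$. Since this symbol is periodic, it descends to an $\Ohi$ symbol on $\T^2$, and Calder\'on--Vaillancourt on $H_N$ (see \cite{CZ},\cite{DJ}) gives $\phi(\Op_N a)\Op_N b=\Op_N(c\#b)=\Ohi_{H_N\to H_N}$. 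The main technical wrinkle is arranging a periodic Borel summation, which is immediate from the periodicity of each $p_j$; beyond that the proof is a transcription of the Euclidean argument into the torus calculus, using that the pseudodifferential calculus on $H_N$ (composition and norm bounds) parallels the calculus on $\R$.
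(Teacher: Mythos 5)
The overall strategy — lift to $T^*\R$, invoke Lemma \ref{RnFunCalc}, and descend to $H_N$ by periodicity — is sound, and it is a genuinely different route from the paper, which simply cites Lemma 2.8 of \cite{CZ} for an argument that matches the matrix entries of $\phi(\Op_N a)$ (from finite-dimensional functional calculus) against those of the expected pseudodifferential operator. Your version is more self-contained and makes visible why the resulting symbol lies in $S(1)$ on $\T^2$.

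Two points deserve sharpening. First, you invoke a ``Borel summation applied termwise'' to produce a periodic representative $c'$ of the Euclidean symbol $c$. As written this leaves a gap: the difference $c-c'$ is $\Ohi$ in $S(1)$ seminorms, so $\Op_h(c-c')=\Ohi_{L^2\to L^2}$, but $H_N\not\subseteq L^2(\R)$ — it is a finite-dimensional subspace of $\Sc'(\R)$ with its own inner product — so an $\Ohi$ bound on $L^2$ does not by itself control $\Op_h(c-c')$ in the $H_N\to H_N$ norm, nor is it clear that this remainder even maps $H_N$ into $H_N$. In fact the Borel step is unnecessary: $\Op_h\tilde a$ commutes with the lattice phase-space translations on $\Sc'(\R)$ (precisely what makes it preserve $H_N$), hence so does $(z-\Op_h\tilde a)^{-1}$, hence so does $\phi(\Op_h\tilde a)$; by uniqueness of the Weyl symbol, $c$ is already doubly periodic and descends directly to a symbol on $\T^2$.

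Second, and relatedly, ``restriction to the invariant subspace $H_N$'' suggests a Hilbert-subspace argument, which is not literally available here. The right justification is that for $\Im z\neq 0$ the resolvent $(z-\Op_h\tilde a)^{-1}$ is itself a pseudodifferential operator with doubly-periodic symbol (by the parametrix construction or Beals's theorem together with the commutation observation above), so it acts on $\Sc'(\R)$ and preserves $H_N$; multiplying by $(z-\Op_N a)$ on either side shows this restriction equals $(z-\Op_N a)^{-1}$, and the Helffer--Sj\"ostrand integral then descends term by term. The remainder of your argument — the disjoint-support estimate and the appeal to the $H_N$ Calder\'on--Vaillancourt bound — is correct.
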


The proof of Lemma \ref{TnFunCalc} essentially consists of showing that the action of $\phi(\Op_Na)$ on $H_N$ is the same as the action of the matrix $\phi$ applied to the matrix with entries $\la \phi(\Op_Na)Q_j,Q_k\ra$. The details are given (in a slightly more general situation) as Lemma 2.8 in \cite{CZ}.

In addition, a particular class of metaplectic operators can be defined on $H_N$. Let $M\in Sp(2n,\Z)$ be an integer symplectic matrix, and let $N$ be even. Define $\wh M_N=\wh M|_{H_N}$, which is a unitary map on $H_N$. Consequently, we have an analogous exact Egorov's theorem:
\begin{equation*}
    \Op_N(a)\wh{M}_N=\wh{M}_N\Op_N(a\circ M).
\end{equation*}

\subsection{Transplantation Operators}
\label{ss:transplants}

We consider the following ``sampling" maps between the torus and the real line. Let $\psi\in\C_0^{\infty}(\T^*\R)$ be a bump function at $(0,0)$. The sampling operator $S_{\psi}:L^2(\R)\to H_N$ is defined by $$S_{\psi}u(x)=N^{-\frac{1}{2}}\sum_{n=0}^{N-1}a_nQ_n$$ where $$a_n=\sum_{k\in\Z}(\Op_h(\psi) u)(k+\frac{n}{N})$$ and as usual $h=\frac{1}{2\pi N}$.
We may also write $S_{\psi}=\Pi_N\Op_h\psi$, where $\Pi_N:\Sc(\R)\to H_N$  given by $$\Pi_Nu=N^{-\frac{1}{2}}\sum_{n=0}^{N-1}\sum_{k\in\Z}u(k+\frac{n}{N})Q_n$$ are the projection operators considered in \cite{DJ}.
As $\Op_h(\psi):L^2(\R)\to\Sc(\R)$, we easily see that $S_{\psi}$ is well defined and bounded. In fact, we shall soon show that it is bounded independent of $h$.

The benefit of working with $S_{\psi}$ comes from the particularly nice form of $S_{\psi}^*$ and $S_{\psi}S_{\psi}^*$.

\begin{prop}\label{SamplingAdjoint}
The adjoint of $S_{\psi}$ is $\Op_h\overline{\psi}$, acting on $H_N$ as a subset of $\Sc'(\R)$.
\end{prop}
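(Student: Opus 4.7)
The plan is to unpack the factorization $S_\psi = \Pi_N \circ \Op_h\psi$ and compute the adjoint of each factor separately. Since $\psi\in\cinf_0$ is Schwartz, $\Op_h\psi$ is smoothing, so $\Op_h\psi:L^2(\R)\to\Sc(\R)$ and by duality (and the standard Weyl fact $(\Op_h\psi)^*=\Op_h\bar\psi$) we also have $\Op_h\bar\psi:\Sc'(\R)\to\Sc(\R)\subset L^2(\R)$. Once the adjoint of $\Pi_N$ is identified with the inclusion $H_N\hookrightarrow\Sc'(\R)$, the claim $S_\psi^* = \Op_h\bar\psi|_{H_N}$ follows by composing.

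First I would verify the adjoint relation for $\Pi_N$ by a direct computation on the orthonormal basis. Given $f\in\Sc(\R)$, the definition gives
\begin{equation*}
\Pi_N f = N^{-1/2}\sum_{n=0}^{N-1}\Bigl(\sum_{k\in\Z} f\bigl(k+\tfrac{n}{N}\bigr)\Bigr)Q_n,
\end{equation*}
so for $v=\sum_n b_n Q_n\in H_N$,
\begin{equation*}
\la\Pi_N f,v\ra_{H_N}=N^{-1/2}\sum_n\bar b_n\sum_{k\in\Z}f\bigl(k+\tfrac{n}{N}\bigr).
\end{equation*}
On the other hand, interpreting $v$ as an element of $\Sc'(\R)$ via $v=N^{-1/2}\sum_n b_n\sum_k\delta_{k+n/N}$, the sesquilinear $L^2$-type pairing $\la f,v\ra$ produces exactly the same right-hand side. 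Thus $\Pi_N^*$ is literally the inclusion $H_N\hookrightarrow\Sc'(\R)$.

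Next, using that Weyl quantization satisfies $(\Op_h\psi)^*=\Op_h\bar\psi$ (recalled in Section~\ref{ss:microlocal}), I would take adjoints in the factorization:
\begin{equation*}
S_\psi^* = (\Op_h\psi)^*\Pi_N^* = \Op_h\bar\psi\circ\iota_{H_N\hookrightarrow\Sc'(\R)},
\end{equation*}
which is the claimed identity. For any $u\in L^2(\R)$ and $v\in H_N$, this amounts to
\begin{equation*}
\la S_\psi u,v\ra_{H_N}=\la\Pi_N(\Op_h\psi u),v\ra_{H_N}=\la\Op_h\psi u,v\ra=\la u,\Op_h\bar\psi v\ra_{L^2},
\end{equation*}
where the middle pairing is the $\Sc$--$\Sc'$ duality applied to the Schwartz function $\Op_h\psi u$ and the tempered distribution $v$.

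The only delicate point is making sense of $\Op_h\bar\psi$ on $H_N\subset\Sc'(\R)$; this is routine because a Schwartz symbol produces a smoothing operator with Schwartz kernel, which pairs with any tempered distribution. No semiclassical estimates are needed here — the proposition is essentially a bookkeeping statement about adjoints, with the only nontrivial content being the $\Pi_N^*$ identification carried out above.
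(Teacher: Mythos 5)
Your proof is correct and takes essentially the same approach as the paper. The paper computes $S_\psi^*$ directly by unwinding the pairing $\la S_\psi u,v\ra_{H_N}$ through the Schwartz kernel of $\Op_h\psi$, which is the same calculation you perform, just organized modularly as the two adjoint identifications $\Pi_N^*=\iota_{H_N\hookrightarrow\Sc'(\R)}$ and $(\Op_h\psi)^*=\Op_h\bar\psi$.
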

\begin{proof}
Let $K(x,y)$ be the Schwartz kernel of $\Op_h\psi$. Then for $u\in L^2(\R)$, $v=\sum_{n=0}^{N-1}Q_n$, we have
\begin{equation*}
    \la S_{\psi}u,v\ra_{H_N}=N^{-\frac{1}{2}}\sum_{n=0}^{N-1}\overline{v_n}\sum_{k\in\Z}(\Op_h(\psi)\,u)(k+\frac{n}{N})
    N^{-\frac{1}{2}}\int\sum_{n=0}^{N-1}\overline{v_n}\sum_{k\in\Z}K(k+\frac{n}{N},y)u(y)\,dy
\end{equation*}
which gives
\begin{equation*}
    S_{\psi}^*v(y)=N^{-\frac{1}{2}}\sum_{n=0}^{N-1}\sum_{k\in\Z}\overline{K(k+\frac{n}{N},y)}v_n\\
    =(\Op_h(\overline{\psi})v)(y)
\end{equation*}
as desired.
\end{proof}

\begin{prop}\label{ProjPsiDo}
If $\psi\in\cinf_0(\T^2)\subseteq\cinf_0(\R^2)$, then
\begin{equation}\Pi_N\Op_h\psi|_{H_N}=\Op_N\tilde{\psi},\end{equation}
where $\tilde{\psi}(x,\xi)=\sum_{k,l\in\Z}\psi(x+k,\xi+l)$ is the periodization of $\psi$.
\end{prop}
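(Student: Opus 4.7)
My plan is to apply both operators to the basis $\{Q_j\}$ of $H_N$ and match the resulting expansions. The key tool is Fourier inversion on $\R^2$: writing $\psi(x,\xi)=\iint\hat\psi(\eta,\zeta)\,e^{2\pi i(\eta x+\zeta\xi)}\,d\eta\,d\zeta$ reduces the problem to computing $\Op_h$ on plane waves. A direct calculation with the Weyl kernel, using $h=1/(2\pi N)$, yields
$$\Op_h\bigl(e^{2\pi i(\eta x+\zeta\xi)}\bigr)u(x)=e^{2\pi i\eta x}\,e^{\pi i\eta\zeta/N}\,u(x+\zeta/N),$$
a modulation--translation. Applied to $Q_j=N^{-1/2}\sum_q\delta_{y=q+j/N}$, this produces a comb of deltas supported on $x\in q+(j-\zeta)/N+\Z$.

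Next, I evaluate this at sampling points $p+n/N$, for $p\in\Z$ and $0\le n\le N-1$, and expand in the $Q_n$ basis. The delta support condition pins $\zeta=j-n+N(q-p)$, which converts the $\zeta$ integral of $\hat\psi$ into a sum over the arithmetic progression $j-n+N\Z$ (indexed by $l:=q-p$). The subsequent sum over $p\in\Z$ produces the Poisson kernel $\sum_p e^{2\pi i\eta p}=\sum_{k\in\Z}\delta(\eta-k)$, which pins $\eta$ to integer values $k$. A careful bookkeeping of the phases (in particular the Weyl midpoint factor $e^{\pi i\eta\zeta/N}$, which combines with the Poisson phase $e^{\pi i\eta l}$ to yield the sign $(-1)^{kl}$) then gives
$$\Pi_N\Op_h\psi\,Q_j=\sum_{n=0}^{N-1}\sum_{k,l\in\Z}\hat\psi(k,j-n+lN)\,(-1)^{kl}\,e^{\pi ik(j+n)/N}\,Q_n.$$

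To conclude, I apply Poisson summation in the remaining direction: since $\tilde\psi$ is the $\Z^2$-periodization of the Schwartz function $\psi$, its Fourier series coefficients are $\hat{\tilde\psi}(k,l)=\hat\psi(k,l)$ for $(k,l)\in\Z^2$. The display above then matches the coordinate formula $\Op_N(\tilde\psi)Q_j=\sum_m A_{mj}Q_m$ with $A_{mj}=\sum_{k,l}\hat{\tilde\psi}(k,j-m-lN)(-1)^{kl}e^{\pi ik(j+m)/N}$ recalled in Section \ref{ss:microlocalHN}, after the harmless substitution $l\mapsto-l$. This proves the identity.

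Conceptually, the proof is just the observation that the two Poisson summations --- one pinning $\zeta$ to $j-n+N\Z$ via the sampling, the other pinning $\eta$ to $\Z$ via the sum over $p$ --- are exactly what convert the Fourier transform on $T^*\R$ into the Fourier series on $\T^2$. The main (and essentially only) obstacle is bookkeeping: keeping track of the factors of $N$ arising from $h=1/(2\pi N)$, the Jacobians when delta functions are rescaled, and the Weyl midpoint phases that conspire to reproduce the $(-1)^{kl}$ sign in the matrix formula.
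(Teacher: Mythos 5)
Your proof is correct, and it takes a genuinely different route from the paper's. Both arguments are ultimately Poisson-summation/periodization arguments, but the organization differs. You expand $\psi$ via Fourier inversion on $\R^2$, compute $\Op_h$ explicitly on plane waves (correctly getting the modulation--translation $e^{2\pi i\eta x}e^{\pi i\eta\zeta/N}u(x+\zeta/N)$ when $h=1/(2\pi N)$), apply this to the basis vectors $Q_j$, and then directly match the resulting expansion against the explicit matrix formula $A_{mj}=\sum_{k,l}\hat a(k,j-m-lN)(-1)^{kl}e^{\pi i(j+m)k/N}$ quoted in Section \ref{ss:microlocalHN}, using the periodization identity $\hat{\tilde\psi}|_{\Z^2}=\hat\psi|_{\Z^2}$. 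The paper instead works with a generic $u\in H_N$: it first uses periodicity of $u$ to convert the sum over lattice translates into a periodization of $\psi$ in the $x$-variable, switches to left quantization, then applies Poisson summation to the comb of sampling points and uses periodicity of $\hat u$ to get the periodization in $\xi$, before commuting the change-of-quantization operator past the periodization at the end. Your route has the advantage of being self-contained and directly reconciled with the coordinate formula the paper already states for $\Op_N$; the paper's route is more structural, exhibiting the two periodizations explicitly and avoiding the passage through matrix entries, at the cost of the intermediate left/Weyl conversion. Both are valid; the bookkeeping of the $(-1)^{kl}$ sign and the $e^{\pi ik(j+n)/N}$ phase in your calculation checks out.
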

\begin{proof}
Let $u\in H_N$.
By periodicity of $u$, we have formally:
\begin{align*}
    \Pi_N\Op_h\psi u&=N^{-\frac{1}{2}}\sum_{n=0}^{N-1}\sum_{k\in\Z}\frac{1}{2\pi h}\iint e^{i(k+\frac{n}{N}-y)\xi}\psi\left(\frac{k+\frac{n}{N}+y}{2},\xi\right)u(y)\,dy\,d\xi Q_n\\
    &=N^{-\frac{1}{2}}\sum_{n=0}^{N-1}\sum_{k\in\Z}\frac{1}{2\pi h}\iint e^{i(\frac{n}{N}-y)\xi}\psi\left(\frac{\frac{n}{N}+y}{2}+k,\xi\right)u(y+k)\,dy\,d\xi Q_n\\
    &=N^{-\frac{1}{2}}\sum_{n=0}^{N-1}\frac{1}{2\pi h}\iint e^{i(\frac{n}{N}-y)\xi}\sum_{k\in\Z}\psi\left(\frac{\frac{n}{N}+y}{2}+k,\xi\right)u(y)\,dy\,d\xi Q_n\\
    &=N^{-\frac{1}{2}}\sum_{n=0}^{N-1}(\Op_h(\psi_p)u) Q_n
\end{align*}
where $\psi_p(x,\xi)=\sum_{k\in\Z}\psi(x+k,\xi)$.

Now let $\psi_L=e^{\frac{i}{2}h\la D_x,D_{\xi}\ra}\psi_p,$ and remark that $\psi_L$ is also periodic in the $x$ variable. Then we get the left quantization
\begin{align*}
    N^{-\frac{1}{2}}\sum_{n=0}^{N-1}(\Op_h^L\psi_Lu) Q_n&=N^{-1}\sum_{k\in\Z}\sum_{n=0}^{N-1}\delta_{x=k+\frac{n}{N}}\frac{1}{2\pi h}\int e^{\frac{i}{h}x\xi}\chi_L(x,\xi)\hat u(\xi)\,d\xi\\
    &=\sum_{m\in\Z}\delta_{x=\frac{m}{n}}\int e^{\frac{i}{h}x\xi}\chi_L(x,\xi)\hat u(\xi)\,d\xi\\
    &=N\sum_{m\in\Z}e^{2\pi iNmx}\int e^{\frac{i}{h}x\xi}\chi_L(x,\xi)\hat u(\xi)\,d\xi\\
    &=\frac{1}{2\pi h}\int\sum_{m\in\Z}e^{\frac{i}{h}x(\xi+m)}\chi_L(x,\xi)\hat u(\xi)\,d\xi\\
    &=\frac{1}{2\pi h}\int\sum_{m\in\Z}e^{\frac{i}{h}x\xi}\sum_{m\in\Z}\chi_L(x,\xi)\hat u(\xi+m)\,d\xi,
\end{align*}
where the third line follows from Poisson summation formula and the final line from periodicity of $\hat{u}$.
But as $e^{\frac{i}{2}h\la D_x,D_{\xi}\ra}$ commutes with periodization, we have this is just $\Op_h(\tilde\chi) u$, as desired.
\end{proof}

\begin{cor}\label{HNHN}
\begin{equation}\label{HNHNEq}
S_{\psi}S_{\psi^*}=\Op_N\left(\widetilde{\psi\#\overline\psi}\right),\end{equation} and consequently $S_{\psi}:L^2(\R)\to H_N$ is bounded independently of $h$. 
\end{cor}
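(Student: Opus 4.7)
The plan is to chain together the two preceding propositions and then extract the boundedness statement via the $C^*$-identity. By Proposition \ref{SamplingAdjoint}, $S_\psi^{*}=\Op_h\overline{\psi}$ when the latter is applied to elements of $H_N$ regarded as tempered distributions, so combining with $S_\psi=\Pi_N\Op_h\psi$ we get immediately
\begin{equation*}
    S_\psi S_\psi^{*}=\Pi_N\,\Op_h(\psi)\,\Op_h(\overline{\psi})\big|_{H_N}.
\end{equation*}
The Moyal composition formula reviewed in Section \ref{ss:microlocal} rewrites this as $\Pi_N\,\Op_h(\psi\#\overline{\psi})|_{H_N}$, which reduces the identity (\ref{HNHNEq}) to applying Proposition \ref{ProjPsiDo} with $\psi$ replaced by $\psi\#\overline{\psi}$.

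The next step is to justify that application. Because $\psi\in\cinf_0(\R^2)$, every term in the asymptotic expansion of $\psi\#\overline{\psi}$ is a differential polynomial in $\psi$ and $\overline{\psi}$ evaluated on the diagonal, so each term is compactly supported in $\supp\psi$; consequently $\psi\#\overline{\psi}$ lies in $S(1)$ with essential support inside $\supp\psi$, and its periodization $\widetilde{\psi\#\overline{\psi}}$ is a well-defined smooth function on $\T^2$. The proof of Proposition \ref{ProjPsiDo} (Poisson summation applied to $\widehat{u}$ and change of quantization) uses compact support only to ensure convergence of the $k$-sums, which continues to hold for $\psi\#\overline{\psi}$. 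Equivalently, one may split $\psi\#\overline{\psi}=p+r_h$ with $p\in\cinf_0(\R^2)$ obtained by truncating the asymptotic expansion and $r_h=\Ohi$ in operator norm, apply Proposition \ref{ProjPsiDo} to $p$, and absorb the $\Ohi$ contribution into the periodized symbol. Either way we obtain (\ref{HNHNEq}).

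For the boundedness consequence, observe that $\widetilde{\psi\#\overline{\psi}}\in\cinf(\T^2)$ with all symbol seminorms bounded uniformly in $h$, so the Calder\'on--Vaillancourt estimate on $H_N$ (stated for $a\in S(1)$ on $\T^2$ in Section \ref{ss:microlocalHN}) gives $\|\Op_N\widetilde{\psi\#\overline\psi}\|_{H_N\to H_N}\le C$ uniformly in $h$. The $C^{*}$-identity
\begin{equation*}
    \|S_\psi\|_{L^2(\R)\to H_N}^{2}=\|S_\psi S_\psi^{*}\|_{H_N\to H_N}
\end{equation*}
then forces $\|S_\psi\|_{L^2(\R)\to H_N}\le C^{1/2}$, independent of $h$.

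The only delicate point is the first step, namely verifying that Proposition \ref{ProjPsiDo} may be invoked for $\psi\#\overline{\psi}$ rather than a strictly compactly supported symbol; I expect this to be routine by either of the two routes indicated above, but it is the one place where some care with the Moyal calculus is needed rather than a formal substitution.
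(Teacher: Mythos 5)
Your proof follows the same route as the paper's: chain Proposition \ref{SamplingAdjoint} with $S_\psi=\Pi_N\Op_h\psi$, use the Moyal composition formula, and apply Proposition \ref{ProjPsiDo}; the boundedness then follows from the $C^*$-identity. You are in fact a bit more careful than the paper, which silently invokes Proposition \ref{ProjPsiDo} for $\psi\#\overline\psi$ even though that symbol is only Schwartz, not compactly supported as the proposition's hypothesis demands --- your remark that the Poisson-summation argument only needs rapid decay (or alternatively a truncation of the asymptotic expansion modulo $\Ohi$) closes that small gap.
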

\begin{proof}
We simply compute:
\begin{equation*}S_{\psi}S_{\psi}^*=\Pi_N\Op_h\psi\Op_h\overline\psi|_{H_N}=\Pi_N\Op_h\left(\widetilde{\psi\#\overline\psi}\right)|_{H_N}=\Op_N\left(\widetilde{\psi\#\overline\psi}\right).
\end{equation*}
which gives (\ref{HNHNEq}).
\end{proof}

We now construct ``spectral cutoffs" which will allow us to move our result about $L^2$ onto $H_N$. Let $f\in\cinf(\T^2)$ have $f(x,\xi)=(x^2+\xi^2)$ on a neighborhood of $(0,0)$ that will be specified later, and $f(x,\xi)=1$ outside a slightly larger neighborhood. In addition, we assume $f$ to be nondecreasing with $|x|^2+|\xi|^2$. Let $v_1,\dots v_B\in H_N$ be eigenfunctions of $\Op_NF$ corresponding to the smallest $B$ eigenvalues $\lambda_1,\dots,\lambda_B$, such that $\lambda_B<\Lambda', \lambda_{B+1}>\Lambda'+\Omega(h^k)$ for some $k$, and for $\Lambda'>\Lambda$ to be determined later. We do suppose that $\psi=1$ on $\{|x|^2+|\xi|^2\le \Lambda'
\}$. (By Weyl's law, such an $\Lambda'$, $B$ exist in any $O(h)$-neighborhood.) Let $H_{\Lambda'}=\spn\{v_1,\dots v_B\}$. These eigenfunctions serve intuitively as ``$H_N$-analogues" to the familiar Hermite functions, and we shall use this analogy in constructing our local Grushin problem. Refer to Figure \ref{f:nestedcutoffs} for a diagram of the functions $\chi, \psi$, and $f$ in relation to the balls of radii $\sqrt{\Lambda}, \sqrt{\Lambda'}$.

Let $T_0=S_{\psi}^*\One_{F\le \Lambda'}=S_{\psi}^*\Pi_{H_{\Lambda'}}$. A key fact is that $T_0$ is almost unitary on $H_{\Lambda'}$.



\begin{lem}\label{AlmostUnitarity}
Let $f=x^2+\xi^2$ on $\supp\psi$. Then 
\begin{equation*}
T_0^*T_0=\Pi_{H_{\Lambda'}}(I+R_{\Lambda'})
\end{equation*}
where $R_{\Lambda'}|_{H_{\Lambda'}^{\perp}}=0$ and $\|R_{\Lambda'}\|_{H_N\to H_N}=\Ohi$.
\end{lem}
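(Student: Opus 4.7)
The plan is to unpack $T_0^*T_0$ as a sandwich of a pseudodifferential operator between two copies of the spectral projection $\Pi_{H_{\Lambda'}}$, and then use functional calculus (Lemma \ref{TnFunCalc}) to replace the middle symbol by the constant $1$ up to $\Ohi$.

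\textbf{Step 1 (algebraic reduction).} Since $T_0 = S_\psi^*\Pi_{H_{\Lambda'}}$, Corollary \ref{HNHN} gives
\begin{equation*}
T_0^*T_0 \;=\; \Pi_{H_{\Lambda'}}\,S_\psi S_\psi^*\,\Pi_{H_{\Lambda'}} \;=\; \Pi_{H_{\Lambda'}}\,\Op_N(c)\,\Pi_{H_{\Lambda'}}, \qquad c := \widetilde{\psi\#\overline\psi}.
\end{equation*}

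\textbf{Step 2 (symbol vanishes to infinite order near the spectral support).} Because $\psi\equiv 1$ on the closed ball $\{x^2+\xi^2\le\Lambda'\}$, every partial derivative of $\psi$ and $\overline\psi$ vanishes on the interior of that set. Each term in the Moyal asymptotic expansion of $\psi\#\overline\psi$ is $h^k$ times a polynomial in such derivatives, so $\psi\#\overline\psi - 1 = \Ohi$ in every $S(1)$-seminorm when localized to an open neighborhood $U$ of $\{f\le\Lambda'\}$ (identifying $\{f\le\Lambda'\}$ with a ball inside $\supp\psi$ via the assumption $f=x^2+\xi^2$ on $\supp\psi$). The same conclusion holds for the periodization $c$, provided $\supp\psi$ is small enough that no nontrivial translate meets $U$.

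\textbf{Step 3 (functional calculus).} Choose $\phi\in\cinf_0(\R)$ with $\phi\equiv 1$ on $[0,\Lambda'+\eta]$ and $\supp\phi\subset[0,\Lambda'+2\eta]$, where $\eta>0$ is small enough that $f^{-1}(\supp\phi)\subset U$. Decompose $c-1 = b_1+b_2$ with $b_1$ supported in $U$ (hence $\Ohi$ in $S(1)$ by Step 2) and $\supp b_2\cap f^{-1}(\supp\phi)=\emptyset$. Lemma \ref{TnFunCalc} yields $\phi(\Op_N f)\Op_N(c-1) = \Ohi_{H_N\to H_N}$. On the other hand, since $\phi\equiv 1$ on $\Spec(\Op_N f|_{H_{\Lambda'}}) = \{\lambda_1,\dots,\lambda_B\}$, the spectral theorem gives $\phi(\Op_N f)\Pi_{H_{\Lambda'}} = \Pi_{H_{\Lambda'}}$. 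Combining,
\begin{equation*}
\Pi_{H_{\Lambda'}}\Op_N(c)\Pi_{H_{\Lambda'}} \;=\; \Pi_{H_{\Lambda'}} + \Pi_{H_{\Lambda'}}\phi(\Op_N f)\Op_N(c-1)\Pi_{H_{\Lambda'}} \;=\; \Pi_{H_{\Lambda'}} + \Ohi.
\end{equation*}

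\textbf{Step 4 (assembling $R_{\Lambda'}$).} Set $R_{\Lambda'} := T_0^*T_0 - \Pi_{H_{\Lambda'}}$. Since $T_0$ factors through $\Pi_{H_{\Lambda'}}$ on the right, both $T_0^*T_0$ and $\Pi_{H_{\Lambda'}}$ annihilate $H_{\Lambda'}^{\perp}$, so $R_{\Lambda'}|_{H_{\Lambda'}^{\perp}}=0$. Both also have range in $H_{\Lambda'}$, so $\Pi_{H_{\Lambda'}}(I+R_{\Lambda'}) = \Pi_{H_{\Lambda'}}+R_{\Lambda'} = T_0^*T_0$, and $\|R_{\Lambda'}\|_{H_N\to H_N} = \Ohi$ by Step 3.

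The main obstacle is Step 2: making precise that the Moyal product $\psi\#\overline\psi$ equals $1$ to infinite order in $h$ on an \emph{open} neighborhood of $\{f\le\Lambda'\}$ (not merely on the closed set where $\psi\equiv 1$), and organizing the symbol decomposition of $c-1$ so that Lemma \ref{TnFunCalc} applies with $\Ohi$ error both in operator norm and in the asymptotic sense. The spectral gap $\lambda_{B+1}-\Lambda' = \Omega(h^k)$ is what grants the flexibility to choose $\phi$ in Step 3 with the required localization properties.
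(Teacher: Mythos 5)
Your proof is correct and follows essentially the same route as the paper's: both reduce $T_0^*T_0$ to $\Pi_{H_{\Lambda'}}\Op_N\bigl(\widetilde{\psi\#\overline\psi}\bigr)\Pi_{H_{\Lambda'}}$ via Corollary~\ref{HNHN}, insert a cutoff $\phi(\Op_N f)$ with $\phi\equiv 1$ on the relevant spectrum and $\supp\phi$ inside $\{\psi\equiv 1\}$, and then invoke the functional-calculus disjoint-support lemma (Lemma~\ref{TnFunCalc}) to conclude $T_0^*T_0=\Pi_{H_{\Lambda'}}+\Ohi$. Your Step~2 (that $\psi\#\overline\psi-1$ is $\Ohi$ in $S(1)$ seminorms near $\{f\le\Lambda'\}$, and that periodization contributes only $\Ohi$ there) together with the decomposition $c-1=b_1+b_2$ simply spells out what the paper's terser proof leaves implicit when it writes $\Op_N(\psi\#\overline\psi)\phi(F)=\phi(F)+\Ohi$; this is a legitimate and helpful elaboration, since Lemma~\ref{TnFunCalc} as stated requires exactly disjoint supports.
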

\begin{proof}
We trivially see that for $w\in H_{\Lambda'}^{\perp}$, $T_0w=0$. Let $v\in H_{\Lambda'}$. Then for $\psi\in\cinf(\T^2)$ with $\phi=1$ on $|x|^2+|\xi|^2\le E$ and $\tilde\psi=1$ on $\supp\phi$, we get by Lemma \ref{RnFunCalc} that
\begin{align*}
    T_0^*T_0v&=\Pi_{H_{\Lambda'}}S_{\psi}S_{\psi}^*\Pi_{H_{\Lambda'}}v=\Pi_{H_{\Lambda'}}\Op_N(\psi\sharp\overline\psi)\One_{F\le \Lambda'}v=\Pi_{H_{\Lambda'}}\Op_N(\psi\sharp\overline\psi)\phi(F)\One_{F\le \Lambda'}v\\
    &=\Pi_{H_{\Lambda'}}\phi(F)\One_{F\le \Lambda'}v+\Pi_{H_{\Lambda'}}\Ohi v=\Pi_{H_{\Lambda'}}(I+R_{\Lambda'})v
\end{align*}
as desired.
\end{proof}

We now show that for any eigenvector $v$ of $F$ with eigenvalue at most $\Lambda'$, $T_0v$ is very close to being an eigenvector of $P_h$ with the same eigenvalue.
\begin{lem}\label{AlmostEig}
If $Fv=\lambda v$ with $\lambda\le \Lambda'$, then $P_hT_0v=\lambda v+w$ with $\|w\|_{L^2(\R)}=\Ohi.$
\end{lem}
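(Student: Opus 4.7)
The plan is to use $T_0 v = S_\psi^* v = \Op_h \overline{\psi}\, v$ (by Proposition \ref{SamplingAdjoint} and the fact that $v \in H_{\Lambda'}$), move $P_h$ past $\Op_h \overline{\psi}$ via the Moyal calculus on $\R$, replace the harmonic oscillator symbol $p(x,\xi) = x^2+\xi^2$ by the periodic lift $\tilde f$, and finally use the eigenvalue relation $\Op_h \tilde f\, v = Fv = \lambda v$ that holds because $v \in H_N$. I read the stated identity as $P_h T_0 v = \lambda T_0 v + w$: as written, $\lambda v$ is a Dirac comb, which does not lie in $L^2(\R)$.

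First I would compute $P_h T_0 v = \Op_h(p \# \overline{\psi})\, v$ by pseudodifferential composition. By the hypothesis $f = x^2+\xi^2$ on $\supp \psi$ (strengthened by shrinking to $\tilde f = p$ on an open neighborhood of $\supp \overline{\psi}$), the Taylor jets of $p$ and $\tilde f$ coincide at every point of $\supp \overline{\psi}$. Each term in the Moyal expansions of $p \# \overline{\psi}$ and $\tilde f \# \overline{\psi}$ is then the same pointwise expression in the derivatives of $p$ (or $\tilde f$) and of $\overline{\psi}$, and both vanish outside $\supp \overline{\psi}$; hence $p \# \overline{\psi} - \tilde f \# \overline{\psi} = \Ohi$ in $S(1)$ with compact support. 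Factoring $\Op_h(\tilde f \# \overline{\psi}) = \Op_h \overline{\psi}\, \Op_h \tilde f + [\Op_h \tilde f, \Op_h \overline{\psi}]$ and using $\Op_h \tilde f\, v = \lambda v$ on the first piece gives
\[
P_h T_0 v - \lambda T_0 v = [\Op_h \tilde f, \Op_h \overline{\psi}]\, v + \Ohi\, \|v\|_{H_N},
\]
reducing the problem to controlling the commutator in $L^2(\R)$.

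The commutator is $\Op_h c$ with $c \in h S(1)$; since every term in its Moyal expansion involves at least one derivative of each of $\tilde f$ and $\overline{\psi}$, we have $\supp c \subset \supp \nabla \overline{\psi}$, which is an annular region strictly away from $\{|x|^2+|\xi|^2 \le \Lambda'\}$ where $\overline{\psi} \equiv 1$. To use microlocal concentration of $v$ there, I would choose $\phi \in \cinf_0(\R)$ with $\phi \equiv 1$ on $[0, \Lambda'+\epsilon]$ and $\supp \phi$ a narrow interval, so that $\phi(F) v = \phi(\lambda) v = v$. Applying the functional calculus of Lemma \ref{TnFunCalc} (whose Helffer--Sj\"ostrand construction realizes $\phi(F) = \Op_h \tilde Q|_{H_N}$ with full asymptotic symbol $\tilde Q$ supported in $\{\tilde f \in \supp \phi\}$) gives
\[
\Op_h c \cdot v = \Op_h c \cdot \phi(F) v = \Op_h(c \# \tilde Q)\, v.
\]

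The main obstacle I anticipate is handling the periodicity of $\tilde Q$: on the central fundamental domain $\{\tilde f \in \supp \phi\}$ is contained in $\{|x|^2+|\xi|^2 \le \Lambda'+2\epsilon\}$ and hence disjoint from $\supp c$ for $\epsilon$ small, but the periodic lattice translates of $\{\tilde f \in \supp \phi\}$ cluster near every point of $\Z^2$. Ensuring $\supp c \cap \supp \tilde Q = \emptyset$ globally requires $\supp \overline{\psi}$ to sit strictly inside one fundamental domain, which is precisely the smallness hypothesis built into the setup of $\psi$. Once this is arranged, the Moyal expansion of $c \# \tilde Q$ vanishes term by term, so $c \# \tilde Q = \Ohi$ in $S(1)$ with compact support. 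A direct Schwartz-kernel estimate on the Dirac combs $Q_n$ gives $\|\Op_h(c \# \tilde Q)\, Q_n\|_{L^2} = \Ohi$ uniformly in $n$, so Cauchy--Schwarz on the expansion $v = \sum_n c_n Q_n$ (with the $N^{1/2}$ factor absorbed into $\Ohi$) yields $\|\Op_h(c \# \tilde Q)\, v\|_{L^2} = \Ohi\, \|v\|_{H_N}$, finishing the argument.
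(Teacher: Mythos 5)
Your proof is correct and follows essentially the same strategy as the paper's one-line argument: use $\phi(F)v = v$ for eigenfunctions with eigenvalue at most $\Lambda'$, use that $P_h = \Op_h p$ and $F = \Op_N f$ have matching symbols on $\supp\psi$, and commute $P_h$ through $S_\psi^* = \Op_h\overline\psi$ to replace $p$ by $\tilde f$ and pick up the eigenvalue, with the commutator handled by microlocal disjointness. You also correctly identify the typo in the lemma statement (the right-hand side should read $\lambda T_0 v + w$, as the paper's own proof derives). The added value in your write-up is in the final two paragraphs, where you make explicit the periodicity constraint that the paper leaves implicit — namely that $\supp\nabla\overline\psi$ must be disjoint from \emph{every} lattice translate of $\{\tilde f\in\supp\phi\}$, not just the one centered at the origin, which is exactly why $\supp\psi$ must sit in one fundamental domain. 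One small imprecision, harmless but worth flagging: $p\#\overline\psi - \tilde f\#\overline\psi$ is $\Ohi$ in $S(1)$ but not literally compactly supported (the Moyal product of a compactly supported symbol has rapidly decaying tails); the correct statement is that the asymptotic expansion of $(p-\tilde f)\#\overline\psi$ vanishes to all orders because all derivatives of $p-\tilde f$ vanish on a neighborhood of $\supp\overline\psi$, so the symbol is $\Ohi$ in $S(1)$ and the operator norm is $\Ohi$ on $L^2(\R)$.
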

\begin{proof}
If $Fv=\lambda v$, then letting $\phi\in\cinf(\T)$ be as in Lemma \ref{AlmostUnitarity} gives
\begin{align*}
    P_hS_{\psi}^*\One_{F\le \Lambda'}v&=P_hS_{\psi}^*\phi(F)v=S_{\psi}^*\phi(F)Fv+\Ohi=\lambda S_{\psi}^*\One_{F\le \Lambda'}v+\Ohi.
\end{align*}
from which the result follows.
\end{proof}

We are now ready to construct our spectral cutoffs. Recall from Lemma \ref{AlmostUnitarity} that $T_0^*T_0=\Pi_{H_{\Lambda'}}(I+R_{\Lambda'})$. By the $\Ohi$ bound on $R_{\Lambda'}$, for $h$ small enough $(I+R_{\Lambda'})^{-\frac{1}{2}}$ is a well-defined bounded self-adjoint operator, so let $T=T_0(I+R_{\Lambda'})^{-\frac{1}{2}}$. Then $T$ is a unitary transformation on $H_{\Lambda'}$, and by Lemma \ref{AlmostEig} and eigenvalue separation near $\Lambda'$, one has $T:H_{\Lambda'}\to\One_{P_h\le\Lambda}L^2(\R)$.

The other key property of $T$ is that it plays well with our cutoff metaplectic operator.

\begin{prop}
Given metaplectic $\wh{M}$ with $M\in Sp(2,\Z)$, $$T^*(I-z\chi(x,hD)\wh{M})T=\Pi_{\Ran T^*}-z\Op_N\chi\wh{M}_N+\Ohi_{H_N\to H_N}.$$
\end{prop}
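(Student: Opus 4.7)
The plan is to split $T^{*}(I-z\chi(x,hD)\wh{M})T = T^{*}T - z\,T^{*}\chi(x,hD)\wh{M}\,T$ and handle the two pieces separately. For the first, unfolding $T=T_{0}(I+R_{\Lambda'})^{-1/2}$ and applying Lemma \ref{AlmostUnitarity} gives
$$T^{*}T = (I+R_{\Lambda'})^{-1/2}\,\Pi_{H_{\Lambda'}}(I+R_{\Lambda'})\,(I+R_{\Lambda'})^{-1/2};$$
since $R_{\Lambda'}$ vanishes on $H_{\Lambda'}^{\perp}$ and is self-adjoint it commutes with $\Pi_{H_{\Lambda'}}$, so this collapses to $\Pi_{H_{\Lambda'}}$. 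Because $T$ is an isometry on $H_{\Lambda'}$, $T^{*}$ is surjective onto $H_{\Lambda'}$, giving $\Pi_{H_{\Lambda'}} = \Pi_{\Ran T^{*}}$.

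For the main term, unfolding $T$ and $T^{*}$ and absorbing the two copies of $(I+R_{\Lambda'})^{-1/2}=I+\Ohi$ into the error (permitted because all intermediate operators are uniformly bounded) reduces the claim to
$$\mathbbm{1}_{F\le\Lambda'}\,S_{\psi}\,\chi(x,hD)\wh{M}\,S_{\psi}^{*}\,\mathbbm{1}_{F\le\Lambda'} = \Op_N\chi\,\wh{M}_N + \Ohi.$$
The strategy is to evaluate the central sandwich first. Using $S_{\psi}=\Pi_N\Op_h\psi$ together with Proposition \ref{SamplingAdjoint} for $S_{\psi}^{*}=\Op_h\overline{\psi}$, and pushing $\wh{M}$ past $\Op_h\overline{\psi}$ by the exact Egorov theorem, the product becomes
$$\Pi_N\,\Op_h\!\left(\psi\,\#\,\chi\,\#\,(\overline{\psi}\circ M^{-1})\right)\wh{M}.$$
By taking $\psi\equiv 1$ on a neighbourhood of $\supp\chi\cup M^{-1}\supp\chi$, all derivatives of $\psi$ vanish on the supports of $\chi$ and of all its derivatives, so every higher-order term in both Moyal products vanishes identically and the symbol reduces exactly to $\chi$. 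Applying Proposition \ref{ProjPsiDo} (noting that $\chi$ has support strictly inside one fundamental domain, so its periodization equals $\chi$) together with $\wh{M}|_{H_N}=\wh{M}_N$ then yields $\Op_N\chi\,\wh{M}_N$. To remove the outer projections $\mathbbm{1}_{F\le\Lambda'}$, I would use the spectral gap $\lambda_B\le\Lambda'<\Lambda'+\Omega(h^{k})\le\lambda_{B+1}$ to realize $\mathbbm{1}_{F\le\Lambda'}$ as $\phi(\Op_N F)$ for some $\phi\in\cinf_{0}(\T^{2})$ equal to $1$ on $F(\supp\chi)$, after which Lemma \ref{TnFunCalc} gives $(I-\mathbbm{1}_{F\le\Lambda'})\Op_N\chi=\Ohi$ and symmetrically on the right.

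The main obstacle is balancing the support requirements on $\psi$: Lemma \ref{AlmostUnitarity} demands $\psi\equiv 1$ on $\{x^{2}+\xi^{2}\le\Lambda'\}$, while the Moyal-product collapse above demands $\psi\equiv 1$ on a neighbourhood of $M^{-1}\supp\chi$. Since $M^{-1}$ stretches by a factor $\lambda$ along one eigendirection (and by $\|Q\|^{2}$ overall after conjugation to diagonal form), this forces $\supp\chi$ to lie in a ball of radius comparable to $\sqrt{\Lambda'}/(\lambda\|Q\|^{2})$ — precisely the ``sufficiently small support'' hypothesis of Theorem \ref{mainHN}. Once this is arranged, what remains is routine bookkeeping of $\Ohi$ operator-norm errors propagating through a fixed-length chain of bounded operators.
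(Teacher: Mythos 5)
Your proof follows the same route as the paper's: peel off the $(I+R_{\Lambda'})^{\pm 1/2}$ factors as $I+\Ohi$, use the exact Egorov theorem to push $\wh M$ past $\Op_h\overline\psi$, invoke the fact that $\psi\equiv 1$ on the relevant supports to collapse the Moyal products, apply Proposition~\ref{ProjPsiDo}, and then strip the outer $\mathbbm{1}_{F\le\Lambda'}$ using functional calculus. Two small imprecisions are worth flagging, though neither changes the outcome.

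First, the claim that ``the symbol reduces \emph{exactly} to $\chi$'' because all higher-order Moyal terms vanish is too strong: when every term in the asymptotic expansion of $\psi\#\chi\#(\overline\psi\circ M^{-1})$ beyond the leading one vanishes, the Moyal product equals $\chi$ only modulo $\Ohi$ (the expansion is asymptotic, not convergent); this is still exactly what the proposition needs, so it is a phrasing issue rather than a gap. Second, and more substantively, you cannot literally realize the sharp spectral cutoff $\mathbbm{1}_{F\le\Lambda'}$ as $\phi(\Op_N F)$ with $\phi$ in a fixed symbol class: forcing $\phi$ to transition inside the $\Omega(h^k)$ spectral gap gives $\phi$ an $h$-dependent modulus of continuity, so the uniform estimates underlying Lemma~\ref{TnFunCalc} fail. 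The paper's workaround -- insert a smooth $h$-independent $\tilde\phi(F)$ with $\supp\tilde\phi\subset(-\infty,\Lambda')$ and $\tilde\phi\equiv 1$ on $f(\supp\chi\cup\supp\chi_\lambda)$, so that $\mathbbm{1}_{F\le\Lambda'}\tilde\phi(F)=\tilde\phi(F)$ identically, and then discard $\tilde\phi(F)$ next to $\Op_N\chi$ by disjoint supports -- achieves what you want without any identification of the sharp projection with a smooth functional calculus; your argument should be adjusted along those lines.
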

\begin{proof}
We must only compute $T^*\chi(x,hD)\wh{M}T$. But this is
\begin{align*}
\One_{F\le \Lambda'}S_{\psi}\chi(x,hD)\wh{M}S_{\psi}^*\One_{F\le \Lambda'}&=\One_{F\le \Lambda'}S_{\psi}\chi(x,hD)\Op_N(\psi\circ M^{-1})\wh{M}_N\One_{F\le \Lambda'}+\Ohi\\
&=\One_{F\le \Lambda'}\Op_N\chi\wh{M}_N\One_{F\le \Lambda'}+\Ohi.
\end{align*}
Let $\tilde\phi$ have $\supp\tilde\phi\subseteq\{|x|^2+|\xi|^2<\Lambda'\}$, with $\tilde\phi=1$ on $\supp\chi\cup\supp\chi_{\lambda}$.
Then
\begin{align*}
    \One_{F\le \Lambda'}\Op_N\chi\wh{M}_N\One_{F\le \Lambda'}&=\One_{F\le \Lambda'}\tilde\phi(F)\Op_N\chi\wh{M}_N\tilde\phi(F)\One_{F\le \Lambda'}+\Ohi\\
    &=\tilde\phi(F)\Op_N\chi\wh{M}_N\tilde\phi(F)+\Ohi\\
    &=\Op_N\chi\wh{M}_N+\Ohi
\end{align*}
which completes the proof.
\end{proof}

\subsection{Grushin Problem on the Torus}
\label{ss:gruT}
Let 
\begin{equation}\label{GrushinOpTorus}\mathcal{P}_N(z):=\begin{pmatrix}I-z\Op_N\chi_Q\wh{M}_N&T^*\wh{Q}^{-1}\tilde R_-\\\tilde R_+\wh{Q}T&0 \end{pmatrix}\end{equation} as a map from $H_N\oplus\C^{K+1}\to H_N\oplus\C^{K+1}$.
We need one more lemma about different sized spectral cutoffs on $L^2(\R).$

\begin{lem}\label{twospeccutoffs}
If $\Lambda'>\|Q\|^2\Lambda$, then 
\begin{equation*}\One_{P_h\le\Lambda}\wh{Q}^{-1}\One_{P_h\le\Lambda'}=\One_{P_h\le\Lambda}\wh{Q}^{-1}+\Ohi_{L^2(\R)\to L^2(\R)}.
\end{equation*}
\end{lem}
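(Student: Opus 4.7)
The plan is to reduce, via Egorov's theorem and the unitarity of $\wh Q$, to a standard disjoint-support estimate in the Weyl calculus. Since $\One_{P_h\le\Lambda'}=I-\One_{P_h>\Lambda'}$, the statement is equivalent to
\begin{equation*}
\One_{P_h\le\Lambda}\,\wh Q^{-1}\,\One_{P_h>\Lambda'}=\Ohi_{L^2(\R)\to L^2(\R)}.
\end{equation*}
By Egorov applied with $M=Q^{-1}$, the conjugated operator $P':=\wh Q P_h\wh Q^{-1}$ equals $\Op_h(p\circ Q^{-1})$, where $p(x,\xi)=x^2+\xi^2$. The intertwining $P_h\wh Q^{-1}=\wh Q^{-1}P'$ extends to the Borel calculus, giving $\One_{P_h\le\Lambda}\wh Q^{-1}=\wh Q^{-1}\One_{P'\le\Lambda}$; using unitarity of $\wh Q^{-1}$ once more, the problem reduces to
\begin{equation*}
\One_{P'\le\Lambda}\,\One_{P_h>\Lambda'}=\Ohi_{L^2(\R)\to L^2(\R)}.
\end{equation*}

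Next I would replace both sharp projections by smooth cutoffs so as to put the estimate in pseudodifferential form. The hypothesis $\|Q\|^2\Lambda<\Lambda'$ leaves room to fix thresholds $\Lambda<\Lambda_1$ and $\|Q\|^2\Lambda_1<\Lambda_2<\Lambda'$, and to choose $\phi_1,\phi_2\in\cinf_0([0,\infty))$ with $\phi_1\equiv 1$ on $[0,\Lambda]$, $\supp\phi_1\subseteq[0,\Lambda_1]$, $\phi_2\equiv 1$ on $[0,\Lambda_2]$, and $\supp\phi_2\subseteq[0,\Lambda']$. By construction $\phi_1(P')\One_{P'\le\Lambda}=\One_{P'\le\Lambda}$, while $\phi_2(P_h)\One_{P_h>\Lambda'}=0$ since every eigenvalue of $P_h$ above $\Lambda'$ lies outside $\supp\phi_2$. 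Inserting these identities and using that $\One_{P'\le\Lambda}$ and $\One_{P_h>\Lambda'}$ are bounded by $1$, the estimate collapses to
\begin{equation*}
\phi_1(P')\bigl(I-\phi_2(P_h)\bigr)=\Ohi_{L^2(\R)\to L^2(\R)}.
\end{equation*}

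The final step is a disjoint-support argument in the Weyl calculus. Since $p\circ Q^{-1}\ge\|Q\|^{-2}p$, the operator $P'$ is elliptic, so Lemma \ref{RnFunCalc} and the Helffer--Sj\"ostrand construction give $\phi_1(P')=\Op_h b_1+\Ohi$, with $b_1$ essentially supported in $\{p\circ Q^{-1}\le\Lambda_1\}\subseteq\{p\le\|Q\|^2\Lambda_1\}$. Similarly $\phi_2(P_h)=\Op_h b_2+\Ohi$; and because $\phi_2$ is identically $1$ on an open neighborhood of $[0,\|Q\|^2\Lambda_1]$ (strictly inside $\{p<\Lambda_2\}$), every term past the principal symbol in the asymptotic expansion of $b_2$ vanishes on a neighborhood of the essential support of $b_1$, so $b_2=1+O(h^\infty)$ there. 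Consequently $b_1\sharp(1-b_2)=O(h^\infty)$ as a Weyl symbol, which yields the desired operator bound.

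The main obstacle is carrying out this last step rigorously, namely verifying that the Helffer--Sj\"ostrand symbol of $\phi_2(P_h)$ really is $1$ modulo $O(h^\infty)$ on an \emph{open} neighborhood of $\supp b_1$, so that all Moyal corrections to $b_1\sharp b_2$ collapse. This is where the strict inequality $\|Q\|^2\Lambda<\Lambda'$ is essential: it is what creates the room to choose the intermediate thresholds $\Lambda_1<\Lambda_2$ together with the margin on which $\phi_2\equiv 1$.
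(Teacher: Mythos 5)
Your argument is correct and uses the same essential tools as the paper's proof: functional calculus for the harmonic oscillator, the exact Egorov theorem, and a disjoint-support estimate ultimately resting on Lemma \ref{RnFunCalc}. The only presentational difference is that you conjugate $P_h$ itself to get $P'=\wh Q P_h\wh Q^{-1}=\Op_h(p\circ Q^{-1})$ and then compare $\One_{P'\le\Lambda}$ against $\One_{P_h>\Lambda'}$, whereas the paper keeps $P_h$ fixed, writes $\phi_1(P_h)=\Op_h\phi_{P1}$, and Egorov-conjugates the \emph{symbol} to $\phi_{P1}\circ Q$; these are algebraically equivalent routes to the same disjoint-support collapse, and the hypothesis $\|Q\|^2\Lambda<\Lambda'$ enters in both at the same place, namely in creating the gap that makes the essential supports separated.
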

\begin{proof}
Let $\phi_1,\phi_2\in\cinf(T^*\R)$ with the following properties:
\begin{itemize}
    \item $\supp\phi_2\subseteq|x|^2+|\xi|^2<\Lambda'$.
    \item $\phi_2(x,\xi)=1$ whenever $(\|Q\|^{-1}x,\|Q\|^{-1}\xi)\in\supp\phi_1$.
    \item $\phi_1(x,\xi)=1$ whenever $|x|^2+|\xi|^2\le \Lambda$.
\end{itemize}
Let $\phi_1(P_h)=\Op_h\phi_{P1}$, $\phi_2(P_h)=\Op_h\phi_{P2}$.
Then by repeated applications of Lemma \ref{RnFunCalc} and exact Egorov theorem:
\begin{align*}
    \One_{P_h\le\Lambda}\wh{Q}^{-1}\One_{P_h\le\Lambda'}&=\One_{P_h\le\Lambda}\phi_1(P_h)\wh{Q}^{-1}\One_{P_h\le\Lambda'}\\
    &=\One_{P_h\le\Lambda}\wh{Q}^{-1}\Op_h(\phi_{P1}\circ Q)\One_{P_h\le\Lambda'}\\
    &=\One_{P_h\le\Lambda}\wh{Q}^{-1}\Op_h(\phi_{P1}\circ Q)\Op_h\phi_{P2}\One_{P_h\le\Lambda'}+\Ohi_{L^2(\R)\to L^2(\R)}\\
    &=\One_{P_h\le\Lambda}\wh{Q}^{-1}\Op_h(\phi_{P1}\circ Q)\phi_2(P_h)\One_{P_h\le\Lambda'}+\Ohi_{L^2(\R)\to L^2(\R)}\\
    &=\One_{P_h\le\Lambda}\wh{Q}^{-1}\Op_h(\phi_{P1}\circ Q)\Op_h\phi_{P2}+\Ohi_{L^2(\R)\to L^2(\R)}\\
    &=\One_{P_h\le\Lambda}\Op_h(\phi_{P1})\wh{Q}^{-1}+\Ohi_{L^2(\R)\to L^2(\R)}\\
    &=\One_{P_h\le\Lambda}\wh{Q}^{-1}+\Ohi_{L^2(\R)\to L^2(\R)}
\end{align*}
which completes the proof.
\end{proof}

Split $u\in H_N$ as $u=u_1+u_2$ with $u_1=\One_{F\le \Lambda'}u$ and $u_2=\One_{F\ge \Lambda'}u$, and similarly for $v=v_1+v_2$.

Consider $\mathcal{P}_N$ as an operator acting on $H_{\Lambda'}\oplus H_{\Lambda'}^{\perp}\oplus\C^{K+1}$. Let
\begin{equation*}
    \mathcal{P}_N(z)\begin{pmatrix}u\\u_-\end{pmatrix}=\begin{pmatrix}v\\v_+\end{pmatrix}
\end{equation*}
so
\begin{equation*}
    (I-z\Op_N\chi_Q\wh{M}_N)(u_1+u_2)+T^*\wh{Q}^{-1}\tilde{R}_-u_-=v_1+v_2
\end{equation*}
\begin{equation*}
    \tilde{R}_+\wh QT(u_1+u_2)=v_+.
\end{equation*}

As before, the equations decouple (up to $\Ohi$ errors) as
\begin{equation*}
    (I-z\Op_n\chi_Q\wh{M}_N+S_0)u_1+S_1u_2+T^*\wh{Q}^{-1}\tilde{R}_-u_-=v_1
\end{equation*}
\begin{equation*}
    (I+S_3)u_2+S_2u_1=v_2
\end{equation*}
\begin{equation*}
    \tilde{R}_+\wh{Q}Tu_1=v_+.
\end{equation*}
where $R_1, R_2=\Ohi_{L^2\to L^2}$. Then we can write
\begin{equation}\label{grushinT}
    \mathcal{P}_N(z):=\begin{pmatrix}I-z\Op_N\chi_Q\wh{M}_N+S_0&S_1&T^*\wh{Q}^{-1}\tilde R_-\\S_2&I+S_3&0\\\tilde R_+\wh QT&0&0 \end{pmatrix}
\end{equation}
as an operator on $\One_{F\le \Lambda'}H_N\oplus\One_{Q> \Lambda'}H_N\oplus\C^{N+1}$, with $S_0, S_1, S_2, S_3=\Ohi_{H_N\to H_N}$.

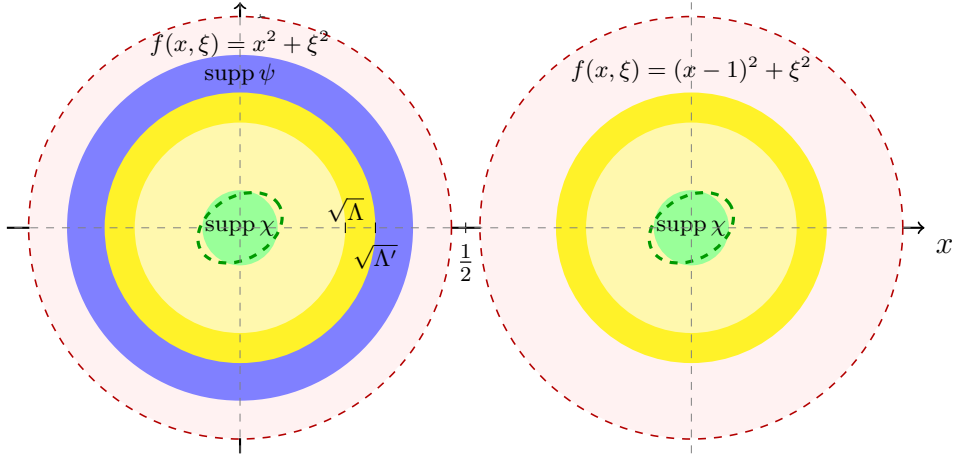
\begin{figure}
\begin{tikzpicture}
\draw[thick,->] (-3.1,0) -- (9.1,0) node[anchor=north west] {$x$};
\draw[thick,->] (0,-3) -- (0,3) node[anchor=north west] {$\xi$};
\draw (3 cm,2pt) -- (3 cm,-2pt) node[anchor=north] {$\frac{1}{2}$};
\draw[red!70!black,very thick,dashed] (0,0) circle (2.8cm);
\fill[red!5!white] (0,0) circle (2.8 cm);
\draw[red!70!black,very thick,dashed] (6,0) circle (2.8cm);
\fill[red!5!white] (6,0) circle (2.8 cm);
\fill[blue!50!white] (0,0) circle (2.3 cm);
\node at (0,2.05) {\scriptsize$\supp\psi$};
\fill[yellow!90!white] (0,0) circle (1.8 cm);
\fill[yellow!90!white] (6,0) circle (1.8 cm);
\fill[yellow!40!white] (0,0) circle (1.4 cm);
\fill[yellow!40!white] (6,0) circle (1.4 cm);
\fill[green!40!white] (0,0) circle (0.5 cm) node[black] {\scriptsize$\supp\chi$};
\fill[green!40!white] (6,0) circle (0.5 cm)node[black] {\scriptsize$\supp\chi$};
\draw[green!60!black,very thick,dashed, rotate=30] (0,0) ellipse (0.6cm and 0.416cm);
\draw[green!60!black,very thick,dashed, rotate around={30:(6,0)}] (6,0) ellipse (0.6cm and 0.416cm);
\draw[step=6cm,gray,very thin, dashed] (-3,-3) grid (9,3);
\draw (1.4 cm,2pt) -- (1.4 cm,-2pt) node[anchor=south] {\scriptsize$\sqrt{\Lambda}$};
\draw (1.8 cm,2pt) -- (1.8 cm,-2pt) node[anchor=north] {\scriptsize$\sqrt{\Lambda'}$};
\node at (0, 2.45) {\scriptsize $f(x,\xi)=x^2+\xi^2$};
\node at (6, 2.1) {\scriptsize $f(x,\xi)=(x-1)^2+\xi^2$};
\end{tikzpicture}
\caption{Nested cutoff functions used in the construction of $\tilde{\mathcal{E}}_N$. The functions $\chi$ and $f$ are defined on the torus, so two fundamental domains are shown. The inner dotted ellipse is the support of $\chi_Q$, which must be contained in the disc $\left\{x^2+\xi^2\le \frac{\Lambda}{2\lambda^2}\right\}$ (see (\ref{PQ}) and Proposition \ref{MainEstimate}).}
\label{f:nestedcutoffs}
\end{figure}
Define
\begin{equation}\label{approxInverseT}
    \tilde{\mathcal{E}}_N(z):=\begin{pmatrix}T^*{E}_QT&0&T^*{E}_{Q,+}\\0&I&0\\{E}_{Q,-}T&0&{E}_{Q,-+} \end{pmatrix}
\end{equation}
and where $$\mathcal{Q}^{-1}\mathcal{E}(z)\mathcal{Q}:=\mathcal{E}_Q(z)=\begin{pmatrix}E_Q&E_{Q,+}\\E_{Q,-}&E_{Q,-+}\end{pmatrix}$$ is the inverse from  Corollary \ref{grushininverse} conjugated by $\mathcal{Q}$ acting on $\One_{P_h\le\Lambda}L^2(\R)\oplus \C^{K+1}$ where we have by Lemma \ref{Grushin Parametrix}, Lemma \ref{NormEquivalence}, and Lemma \ref{twospeccutoffs} that all terms acting on 
$\One_{P_h>\Lambda}L^2(\R)$ are of norm $\Ohi$.
Then we multiply (\ref{grushinT}) and (\ref{approxInverseT}) to get:
\begin{align}
\begin{split}\label{AppInvT}
    &\mathcal{P}_N(z)\tilde{\mathcal{E}}_N(z)\\&=\begin{pmatrix}(I-z\Op_N\chi_Q\wh{M}_N)T^*E_QT+T^*\tilde R_-E_{Q,-}T &S_1&(I-z\Op_N\chi_Q\wh{M}_N)T^*E_{Q,+}+T^*\tilde R_-E_{Q,-+}\\ S_2T^*E_QT&I+S_3&S_2T^*E_{Q,+}\\\tilde R_+TT^*E_QT&0&R_+TT^*E_{Q,-+}\end{pmatrix}\\
    &=\begin{pmatrix}T^*\left((I-z\Op_h\chi_Q\wh{M})E_Q+\tilde R_-E_{Q,-}\right)T &0&T^*\left((I-z\Op_h\chi_Q\wh{M})E_{Q,+}+\tilde R_-E_{Q,-+}\right)\\ 0&I&0\\\tilde R_+E_QT&0&R_+E_{Q,-+}\end{pmatrix}+\Ohi\\
    &:=I+\mathcal{S}(z)
    \end{split}
\end{align}
where $\|\mathcal{S}(z)\|_{H_N\oplus \C^K\to H_N\oplus \C^K}=\Ohi.$

We now resolve the Grushin problem in a similar manner to Corollary \ref{grushininverse}.

\begin{cor}\label{grushininversetorus}
For all $K\in\N$, there is an $r_K$  with $r_K\to\infty$, such that for $h$ small enough there is a $\mathcal{E}_N(z):H_N\oplus \C^{K+1}\to H_N\oplus \C^{K+1}$ defined on $\{|z|\le r_K\}$ such that
\begin{equation*}\mathcal{P}_N(z)\mathcal{E}_N(z)=\mathcal{E}_N(z)\mathcal{P}_N(z)=I.
\end{equation*}
\end{cor}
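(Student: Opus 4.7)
The plan is to follow the same strategy used in Corollary \ref{grushininverse}, exploiting the approximate inverse $\tilde{\mathcal{E}}_N(z)$ already constructed in (\ref{approxInverseT}) together with the identity (\ref{AppInvT}) $\mathcal{P}_N(z)\tilde{\mathcal{E}}_N(z) = I + \mathcal{S}(z)$ with $\|\mathcal{S}(z)\|_{H_N \oplus \C^{K+1} \to H_N \oplus \C^{K+1}} = \Ohi$. Since $\mathcal{S}(z)$ has $\Ohi$ norm for $|z| \le r_K$ (with $r_K$ inherited from Corollary \ref{grushininverse}), for $h$ small enough the Neumann series $\sum_{j=0}^{\infty} (-1)^j \mathcal{S}(z)^j$ converges in operator norm, so we may define
\begin{equation*}
\mathcal{E}_N(z) := \tilde{\mathcal{E}}_N(z) \left( \sum_{j=0}^{\infty} (-1)^j \mathcal{S}(z)^j \right),
\end{equation*}
which gives $\mathcal{P}_N(z) \mathcal{E}_N(z) = I$, i.e. a bounded right inverse.

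To upgrade this to a two-sided inverse, I would argue (as in the proof of Corollary \ref{grushininverse}) that $\mathcal{P}_N(z)$ is an analytic family of Fredholm operators of index $0$, so that the existence of a right inverse forces $\mathcal{E}_N(z)$ to also be a left inverse. Analyticity in $z$ is immediate from the definition (\ref{GrushinOpTorus}), since only the $z$-linear term $-z\Op_N\chi_Q \wh M_N$ depends on $z$. For the index computation, evaluate at $z = 0$:
\begin{equation*}
\mathcal{P}_N(0) = \begin{pmatrix} I & T^* \wh Q^{-1} \tilde R_- \\ \tilde R_+ \wh Q T & 0 \end{pmatrix}.
\end{equation*}
Here one observes that on $H_N$, which is finite-dimensional, this is an operator between finite-dimensional spaces and is trivially Fredholm of index $0$. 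One can either invoke finite-dimensionality directly, or verify by hand (using $\tilde R_+ \tilde R_- = I_{\C^{K+1}}$ from Proposition \ref{LocalGrushins} together with unitarity of $T$ on $H_{\Lambda'}$ and of $\wh Q$) that a two-sided inverse exists at $z = 0$, analogous to the formula displayed in the proof of Corollary \ref{grushininverse}.

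The main obstacle I expect is making the finite-dimensional Fredholm argument work cleanly in our setup, specifically the verification that $\mathcal{P}_N(0)$ is actually invertible modulo $\Ohi$ errors. Since $T$ is only unitary on $H_{\Lambda'}$ (and zero on $H_{\Lambda'}^{\perp}$), one must be careful about how $T$ and $T^*$ compose; however, since everything reduces to finite-dimensional linear algebra on $H_N$ with a block structure matching the decomposition $H_{\Lambda'} \oplus H_{\Lambda'}^{\perp} \oplus \C^{K+1}$, the argument is essentially identical to the one in Corollary \ref{grushininverse}, with the almost-unitarity in Lemma \ref{AlmostUnitarity} absorbing any corrections into $\Ohi$ terms that are already incorporated into $\mathcal{S}(z)$.
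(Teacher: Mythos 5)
Your proposal is correct and matches the paper's approach: construct the right inverse via the Neumann series $\mathcal{E}_N(z) = \tilde{\mathcal{E}}_N(z)\sum_{j\ge 0}(-1)^j\mathcal{S}(z)^j$ using (\ref{AppInvT}), then conclude it is two-sided. The paper simply invokes finite-dimensionality of $H_N\oplus\C^{K+1}$ directly (a right inverse of a linear map on a finite-dimensional space is automatically a left inverse), so the analytic-Fredholm-of-index-zero machinery and the discussion of $\mathcal{P}_N(0)$ that you spend time worrying about are unnecessary here — you correctly flag this shortcut as an option, and it is the one the paper takes.
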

\begin{proof}
The proof is analogous to that of Corollary \ref{grushininverse}, but simpler as everything is finite-dimensional. We write:
\begin{equation}
\label{InverseT}
    \mathcal{E}_N(z):=\tilde{\mathcal{E}}_N(z)\left(\sum_{j=0}^{\infty}(-1)^j\mathcal{S}(z)^j\right)
\end{equation}
which converges for $h$ small enough by (\ref{AppInvT}) and gives that $\mathcal{E}_N(z)$ is a right inverse. As $H_N$ is a finite-dimensional space, it must also be a left inverse.
\end{proof}

As in Theorem \ref{mainL2}, the Schur complement formula gives:

\begin{proof}[Proof of Theorem \ref{mainHN}]
We have:
\begin{equation*}
    \left(\mathcal{S}^j\right)_{+-}=\Ohi.
\end{equation*} This gives that for $p=\frac{m-K-1}{4}-\epsilon$,
\begin{equation*}
    E_{Q,-+}=-I+zL_K+O(h^p)
    =B_K(-I+zD_K+O(h^p))B_K^{-1},
\end{equation*}
from which the result follows via the same argument as Theorem \ref{mainL2}.
\end{proof}

Finally, we discuss the constraints on the support of $\chi$ for Theorem \ref{mainHN}. We require $\supp\chi,\supp\nu\subseteq\{|x|^2+|\xi|^2\le \Lambda\}$ to use Proposition \ref{MainEstimate}, and in (\ref{PQ}) we replaced $\chi$ with the pulled back cutoff $\chi_Q$. For the theorems on the quantized real line, $\Lambda$ may be fixed arbitrarily large, but on $H_N$ there are conditions imposed by periodicity. Specifically, Lemma \ref{twospeccutoffs} requires $\Lambda'>\|Q\|^2\Lambda$, the construction of the spectral cutoffs $T_0$ require that $\psi=1$ on $\{|x|^2+|\xi|^2\le \Lambda'\}$ and $f(x,\xi)=x^2+\xi^2$ on $\supp\psi$ for Lemmas \ref{AlmostUnitarity} and \ref{AlmostEig} to apply, and $f$ must in particular be a smooth function, so may not equal $x^2+\xi^2$ on a ball about the origin of radius larger than $\frac{1}{2}$. These requirements are satisfied for certain $\psi$, $f$, $\Lambda$, $\Lambda'$ provided $\supp\chi,\supp\nu$ is contained in a ball of radius $\frac{c}{\lambda\|Q\|^2}$ about the origin, as claimed in the discussion in Section \ref{ss:thms}.

\section{Rates of Decay for Nontrapping Eigenvalues}
\label{s:rates}

We now move on to the proof of Theorem \ref{NTDecayRate}. We use the technique of a microlocal weight function, similar to (but simpler than) that used by Nonnenmacher, Sj\"ostrand, and Zworski in \cite{NSZ} and also to Section 6.4 of \cite{DZ}.

\begin{proof}[Proof of Theorem \ref{NTDecayRate}]

Let $g\in\cinf_0(\T^2;\R)$ have $g(x,\xi)=x^2-\xi^2$ on a neighborhood of $\supp\nu$. Let \begin{equation*}M_{tg}:=e^{-t\Op_Ng}\Op_N\nu\wh{M}_Ne^{t\Op_Ng}=e^{-t\Op_Ng}\wh{M}_N\Op_N(M^*\nu)e^{t\Op_Ng}\end{equation*} for some $t$ that may depend on $N$. We shall bound the norm of $M_{tg}$ and optimize by taking $t=K'\log\left(\frac{1}{h}\right)$ at the end. Recall that by results in Section \ref{ss:microlocal} adapted to the calculus on $H_N$, $e^{\pm t\Op_Ng}$ are semiclassical pseudodifferential operators with norm at most $O\left(e^{C|t|}\right)$.
We compute:
\begin{align*}
    &M_{tg}=e^{-t\Op_Ng}\wh{M}_Ne^{t\Op_Ng}e^{-t\Op_Ng}\Op_N(M^*\nu) e^{t\Op_Ng}\\&=e^{-t\Op_Ng}e^{t\Op_N\left(\left(M^{-1}\right)^*g\right)}\wh{M}_N\Op_N\nu_{tg}\\&=e^{-t\Op_Ng}e^{t\Op_N\left(\left(M^{-1}\right)^*g\right)}e^{t\left(\Op_Ng-\Op_N\left(\left(M^{-1}\right)^*g\right)\right)}e^{-t\left(\Op_Ng-\Op_N\left(\left(M^{-1}\right)^*g\right)\right)}\wh{M}_N\Op_N\nu_{tg}.
\end{align*}
where $\Op_N\nu_{tg}=e^{-t\Op_Ng}\Op_N(M^*\nu) e^{t\Op_Ng}$ is a pseudodifferential operator and the second equality comes from the exact Egorov's theorem applied to the functional calculus expression $e^{t\Op_Ng}$. Note also that if $t=O(\log\frac{1}{h})$, then $\|e^{\pm t\Op_Ng}\|_{H_N\to H_N}\le O(h^M)$ for some $M$, so the microlocal support of $\Op_N\nu$ is preserved.

We claim that $e^{-t\Op_Ng}e^{t\Op_N\left(\left(M^{-1}\right)^*g\right)}e^{t\left(\Op_Ng-\Op_N\left(\left(M^{-1}\right)^*g\right)\right)}$ is bounded independent of $t$ and $N$. For convenience let $G_1=\Op_Ng$, $G_2=\Op_N\left(\left(M^{-1}\right)^*g\right)$, so we must bound $W(t):=e^{-tG_1}e^{tG_2}e^{t(G_1-G_2)}$. We have that
\begin{align}\label{WDiffeq}
\begin{split}
    \dd_tW(t)&=-G_1e^{-tG_1}e^{tG_2}e^{t(G_1-G_2)}+e^{-tG_1}e^{tG_2}G_1e^{t(G_1-G_2)}\\&=\left(-G_1+e^{-tG_1}e^{tG_2}G_1e^{-tG_2}e^{tG_1}\right)W(t)\\
    &:=Z(t)W(t)
    \end{split}
\end{align}
where $Z$ is a pseudodifferential operator. To establish bounds on $Z(t)$, we argue as in Theorem 6.21 in \cite{DZ}. We see that
\begin{align}
    \nonumber Z(t)&=-G_1+e^{tG_2}G_1e^{-tG_2}-e^{tG_2}G_1e^{-tG_2}+e^{-tG_1}e^{tG_2}G_1e^{-tG_2}e^{tG_1}\\
    \label{Twoads}&=(-I+e^{\ad_{tG_2}})G_1+(-I+e^{\ad_{-tG_1}})\left(e^{tG_2}G_1e^{-tG_2}\right).
\end{align}
We bound the first term of (\ref{Twoads}), and the second will follow similarly. Fix $K$ large, and expand using Taylor's formula:
\begin{equation}\label{AdExpansion}
    (-I+e^{\ad_{tG_2}})G_1=\sum_{k=1}^K\frac{t^k}{k!}\ad_{G_2}^kG_1+\frac{t^{K+1}}{K!}\int_0^1(1-s)^Ke^{-tsG_2}\ad_{G_2}^{K+1}G_1e^{tsG_2}\,ds.
\end{equation}
Note that by pseudodifferential calculus $\|\ad_{G_2}^kG_1\|_{H_N\to H_N}=O(h^k)$. Plugging into (\ref{AdExpansion}) gives for some $C>0$
\begin{equation*}
   \|(-I+e^{\ad_{tG_2}})G_1\|_{H_N\to H_N}=O(th+(th)^Ke^{Ct})
\end{equation*}
and an analogous argument for $(-I+e^{\ad_{-tG_1}})\left(e^{tG_2}G_1e^{-tG_2}\right)$ gives that
\begin{equation*}
    \|Z(t)\|_{H_N\to H_N}=O(th+(th)^Ke^{Ct}).
\end{equation*}
Now choose $t=K'\log\frac{1}{h}$ for fixed $K'$. Making $K$ large enough, we have $\|Z(s)\|_{H_N\to H_N}=O(h\log\frac{1}{h})$ uniformly in $s\le t$. By (\ref{WDiffeq}) and Gronwall's inequality, we see that \begin{equation*}\|W(t)\|_{H_N\to H_N}\le e^{tCh\log\frac{1}{h}}=e^{C_0h\left(\log\frac{1}{h}\right)^2}\le C_1.\end{equation*}

We now show the norm of $e^{-t\left(\Op_Ng-\Op_N\left(\left(M^{-1}\right)^*g\right)\right)}\wh{M}_N\Op_N\nu_{tg}$ to be small as $t$ grows large. Note that $g-\left(\left(M^{-1}\right)^*g\right)\ge c_0$ on a neigborhood away from the origin, which we may take as $\supp\nu_{tg}$ up to $\Ohi$ errors, so by the same method as Lemma 3.6 in \cite{NSZ} we may replace it with $\left(g-\left(\left(M^{-1}\right)^*g\right)\right)\nu_1+(1-\nu_1)$ with $\nu_1=1$ on $\supp\nu$ and $g=x^2-\xi^2$ on $\supp\nu_1$. Then by the G\aa rding inequality, $\Op_N(g-(M^{-1})^*g)\nu_1+(1-\nu_1)$ has spectrum bounded below by $\frac{c_0}{2}$ for $h$ small enough. This gives \begin{equation*}\left\|e^{-t\left(\Op_Ng-\Op_N\left(\left(M^{-1}\right)^*g\right)\right)}\right\|
\le e^{-\frac{t}{2}c_0}=h^{K'\frac{c_0}{2}}
\end{equation*}
where $K'$ can be chosen arbitrarily large, which completes the proof.
\end{proof}

\section{Acknowledgements}
We would like to thank Maciej Zworski for suggesting the problem and giving guidance along the way. We also acknowledge the partial support under the NSF grant DMS-1952939.

\bibliographystyle{acm}

\bibliography{sources}

\begin{thebibliography}{10}

\bibitem{BGR}
{\sc Bardos, C., Guillot, J.-C., and Ralston, J.}
\newblock Relation de poisson pour l'{\'e}quation des ondes dans un ouvert non
  born{\'e}. application au scattering.
\newblock {\em Journ{\'e}es {\'e}quations aux d{\'e}riv{\'e}es partielles\/}
  (1980), 1--2.

\bibitem{BD}
{\sc Bouzouina, A., and De~Bi{\`e}vre, S.}
\newblock Equipartition of the eigenfunctions of quantized ergodic maps on the
  torus.
\newblock {\em Communications in mathematical physics 178}, 1 (1996), 83--105.

\bibitem{CZ}
{\sc Christiansen, T.~J., and Zworski, M.}
\newblock Probabilistic weyl laws for quantized tori.
\newblock {\em Communications in Mathematical Physics 299}, 2 (2010), 305--334.

\bibitem{DG}
{\sc Degli~Esposti, M., and Graffi, S.}
\newblock Mathematical aspects of quantum maps.
\newblock In {\em The mathematical aspects of quantum maps}. Springer, 2003,
  pp.~49--90.

\bibitem{Deleporte}
{\sc Deleporte-Dumont, A.}
\newblock {\em Low-energy spectrum of Toeplitz operators}.
\newblock PhD thesis, Strasbourg, 2019.

\bibitem{DS}
{\sc Dimassi, M., Sjostrand, J., et~al.}
\newblock {\em Spectral asymptotics in the semi-classical limit}.
\newblock No.~268. Cambridge university press, 1999.

\bibitem{DJ}
{\sc Dyatlov, S., and J{\'e}z{\'e}quel, M.}
\newblock Semiclassical measures for higher dimensional quantum cat maps.
\newblock {\em arXiv preprint arXiv:2108.10463\/} (2021).

\bibitem{DZ}
{\sc Dyatlov, S., and Zworski, M.}
\newblock {\em Mathematical theory of scattering resonances}, vol.~200.
\newblock American Mathematical Soc., 2019.

\bibitem{FND}
{\sc Faure, F., Nonnenmacher, S., and De~Bievre, S.}
\newblock Scarred eigenstates for quantum cat maps of minimal periods.
\newblock {\em Communications in Mathematical Physics 239}, 3 (2003), 449--492.

\bibitem{FT}
{\sc Faure, F., and Tsujii, M.}
\newblock Prequantum transfer operator for symplectic anosov diffeomorphism.
\newblock {\em arXiv preprint arXiv:1206.0282\/} (2012).

\bibitem{Gerard}
{\sc G{\'e}rard, C.}
\newblock Asymptotique des p{\^o}les de la matrice de scattering pour deux
  obstacles strictement convexes.
\newblock {\em M{\'e}moires de la Soci{\'e}t{\'e} Math{\'e}matique de France
  31\/} (1988), 1--146.

\bibitem{GS}
{\sc G{\'e}rard, C., and Sj{\"o}strand, J.}
\newblock Semiclassical resonances generated by a closed trajectory of
  hyperbolic type.
\newblock {\em Communications in mathematical physics 108}, 3 (1987), 391--421.

\bibitem{Ikawa}
{\sc Ikawa, M., et~al.}
\newblock On the poles of the scattering matrix for two strictly convex
  obstacles.
\newblock {\em Journal of mathematics of Kyoto university 23}, 1 (1983),
  127--194.

\bibitem{LR}
{\sc Louati, H., and Rouleux, M.}
\newblock Semiclassical resonances associated with a periodic orbit.
\newblock {\em Mathematical Notes 100}, 5 (2016), 724--730.

\bibitem{Nonnenmacher}
{\sc Nonnenmacher, S.}
\newblock Spectral problems in open quantum chaos.
\newblock {\em Nonlinearity 24}, 12 (2011), R123.

\bibitem{NSZ2}
{\sc Nonnenmacher, S., Sj{\"o}strand, J., and Zworski, M.}
\newblock From open quantum systems to open quantum maps.
\newblock {\em Communications in mathematical physics 304}, 1 (2011), 1--48.

\bibitem{NSZ}
{\sc Nonnenmacher, S., Sj{\"o}strand, J., and Zworski, M.}
\newblock Fractal weyl law for open quantum chaotic maps.
\newblock {\em Annals of Mathematics\/} (2014), 179--251.

\bibitem{NZ}
{\sc Nonnenmacher, S., and Zworski, M.}
\newblock Distribution of resonances for open quantum maps.
\newblock {\em Communications in mathematical physics 269}, 2 (2007), 311--365.

\bibitem{SV}
{\sc Saraceno, M., and Vallejos, R.}
\newblock The quantized d-transformation.
\newblock {\em Chaos: An Interdisciplinary Journal of Nonlinear Science 6}, 2
  (1996), 193--199.

\bibitem{Schenck}
{\sc Schenck, E.}
\newblock Weyl laws for partially open quantum maps.
\newblock In {\em Annales Henri Poincar{\'e}\/} (2009), vol.~10, Springer,
  pp.~711--747.

\bibitem{Sjostrand}
{\sc Sj{\"o}strand, J.}
\newblock Resonances associated to a closed hyperbolic trajectory in dimension
  2.
\newblock {\em Asymptotic Analysis 36}, 2 (2003), 93--113.

\bibitem{SZ}
{\sc Sj{\"o}strand, J., and Zworski, M.}
\newblock Elementary linear algebra for advanced spectral problems.
\newblock In {\em Annales de l'institut Fourier\/} (2007), vol.~57,
  pp.~2095--2141.

\bibitem{Teschl}
{\sc Teschl, G.}
\newblock {\em Ordinary differential equations and dynamical systems},
  vol.~140.
\newblock American Mathematical Soc., 2012.

\bibitem{Zworski}
{\sc Zworski, M.}
\newblock {\em Semiclassical analysis}, vol.~138.
\newblock American Mathematical Soc., 2012.

\end{thebibliography}

\end{document}